\newcommand{\refdoi}[1]{\href{http://dx.doi.org/#1}{\texttt{doi:#1}}}
\newcommand{\refarxiv}[1]{\href{http://arxiv.org/abs/#1}{\texttt{arXiv:#1}}}
\theoremstyle{plain}% default
\newtheorem{sidethm}{Side Theorem}[section]
\newtheorem{sidecor}[sidethm]{Side Corollary}
\newtheorem{theorem}{Theorem}[section] %start new section counter
\newtheorem{lemma}[theorem]{Lemma} %share with theorem counter
\newtheorem{proposition}[theorem]{Proposition} %share with theorem counter
\newtheorem{corollary}[theorem]{Corollary} %share with theorem counter
\newtheorem{observation}[theorem]{Observation} %share with theorem counter
\newtheorem{exercise}[theorem]{Exercise} %share with theorem counter
\newtheorem{problem}[theorem]{Problem} %share with theorem counter
\newtheorem{question}[theorem]{Question} %share with theorem counter
\newtheorem{conjecture}[theorem]{Conjecture} %share with theorem counter
\theoremstyle{definition}
\newtheorem{sidedef}[sidethm]{Side Definition}
\newtheorem{definition}[theorem]{Definition} %share with theorem counter
\newtheorem{example}[theorem]{Example} %share with theorem counter
\theoremstyle{remark}
\newtheorem{sidermk}[sidethm]{Side Remark}
\newtheorem{remark}[theorem]{Remark} %share with theorem counter
\newcommand{\bd}{\mathbf}
\newcommand{\ZZ}{\mathbb{Z}}
\newcommand{\stab}{\operatorname{Stab}}
\newcommand{\Wlog}{without loss of generality}
\newcommand{\conj}{\overline}
\newcommand{\abs}[1]{\lvert{#1}\rvert}
\newcommand{\card}[1]{\lvert{#1}\rvert}
\newcommand{\floor}[1]{\lfloor #1 \rfloor}
\DeclareFontFamily{U}{wncy}{}
\DeclareFontShape{U}{wncy}{m}{n}{<->wncyr10}{}
\DeclareSymbolFont{mcy}{U}{wncy}{m}{n}
\DeclareMathSymbol{\Sha}{\mathord}{mcy}{"58}
\newcommand{\map}{\operatorname}
\newcommand{\mcal}{\mathcal}
\newcommand{\mathd}[1]{\[#1 \]} %display math
\newcommand{\etale}{\'etal\@ifstar{\'e}{e\xspace}}
\begin{document}

% %Toggle for *marked* arXiv version.
% \markversion{arxiv}
% \excludeversion{submit}
% \includeversion{disclaimer}
% \includeversion{markednewpar} %make sure to use \begin{markednewpar}\leavevmode\end{markednewpar}\begin{arxiv} in front of new (`horizontal') paragraphs with preceding \end{environment}: see http://tex.stackexchange.com/questions/22852/function-and-usage-of-leavevmode
%\excludeversion{ejc}

%Toggle for *unmarked* arXiv version.
\includeversion{arxiv}
\excludeversion{submit}
\includeversion{disclaimer}
\excludeversion{markednewpar}
\excludeversion{ejc}

% %Toggle for version to be submitted for publication.
% \excludeversion{arxiv}
% \includeversion{submit}
% \excludeversion{disclaimer}
% \excludeversion{markednewpar}
% \includeversion{ejc}

\begin{ejc}
% if needed include a line break (\\) at an appropriate place in the title

\title{\bf Simultaneous Core Partitions: \\  Parameterizations and Sums}

% input author, affilliation, address and support information as follows;
% the address should include the country, and does not have to include
% the street address 

\author{
Victor Y. Wang\thanks{Supported by NSF grant 1358695 and NSA grant H98230-13-1-0273.}\\
\small Department of Mathematics\\[-0.8ex]
\small Massachusetts Institute of Technology\\[-0.8ex] 
\small Massachusetts, U.S.A.\\
\small\tt vywang@mit.edu\\
% \and
% Forgotten Second Author \qquad  Forgotten Third Author\\
% \small School of Hard Knocks\\[-0.8ex]
% \small University of Western Nowhere\\[-0.8ex]
% \small Nowhere, Australasiaopia\\
% \small\tt \{fsa,fta\}@uwn.edu.ao
}

% \date{\dateline{submission date}{acceptance date}\\
% \small Mathematics Subject Classifications: comma separated list of
% MSC codes available from http://www.ams.org/mathscinet/freeTools.html}

\date{\dateline{Aug 7, 2015}{Dec 28, 2015 (provisionally)}\\
\small Mathematics Subject Classifications (2010): 05A15, 05A17, 05E10, 05E18}
\end{ejc}

\begin{arxiv}
\title[Simultaneous core partitions: parameterizations and sums]{Simultaneous core partitions: parameterizations and sums}

\date{\today}

\author[Victor Y. Wang]{Victor Y. Wang}
\address{Department of Mathematics, Massachusetts Institute of Technology, \mbox{Cambridge, MA 02139}}
\email{vywang@mit.edu}

\subjclass[2010]{05A15, 05A17, 05E10, 05E18}
\keywords{Core partition; hook length; beta-set; group action; cyclic shift; Dyck path; rational Catalan number; Motzkin number; numerical semigroup}

\begin{disclaimer}\thanks{v4: added reference \cite{Zeilberger} and updated \cite{CHW}; shorter version available at Electronic J. Combin. 23(1) (2016), \#P1.4. v3: minor improvements and clarifications throughout, including implicit variant details; added some references and remarks on finite beta-sets, maximal cores, and poset method. v2: extended $z$-coordinates of Johnson to parameterize all $t$-cores, not just $(s,t)$-cores; applied this to a conjecture of Amdeberhan--Leven (v3 mentions later independent proof by Johnson); added some references. Version for the arXiv, with 34 pages, 3 figures. Shorter version with 28 pages (currently available at \url{https://www.overleaf.com/read/xmzxdgbdcpnq}), with fewer details, to be submitted for publication. Comments welcome on either.}\end{disclaimer}
\begin{submit}\thanks{Shorter 
version (currently available at \url{https://www.overleaf.com/read/xmzxdgbdcpnq}). Longer version available at \url{http://arxiv.org/abs/1507.04290}.}\end{submit}

%\dedicatory{In memory of }

\begin{abstract}
Fix coprime $s,t\ge1$. We re-prove, without Ehrhart reciprocity, a conjecture of Armstrong (recently verified by Johnson) that the finitely many simultaneous $(s,t)$-cores have average size $\frac{1}{24}(s-1)(t-1)(s+t+1)$, and that the subset of self-conjugate cores has the same average (first shown by Chen--Huang--Wang). We similarly prove a recent conjecture of Fayers that the average weighted by an inverse stabilizer---giving the ``expected size of the $t$-core of a random $s$-core''---is $\frac{1}{24}(s-1)(t^2-1)$. We also prove Fayers' conjecture that the analogous self-conjugate average is the same if $t$ is odd, but instead $\frac{1}{24}(s-1)(t^2+2)$ if $t$ is even. In principle, our explicit methods---or implicit variants thereof---extend to averages of arbitrary powers.

The main new observation is that the stabilizers appearing in Fayers' conjectures have simple formulas in Johnson's $z$-coordinates parameterization of $(s,t)$-cores.

%\todo[inline]{Clarify that Johnson only parameterized $(s,t)$-cores, not $t$-cores in the general way (depending on a coprime parameter $s$), and that this general way can be useful (e.g. for the ``symmetric'' proof of Amdeberhan--Leven conjecture).}

We also observe that the $z$-coordinates extend to parameterize general $t$-cores. As an example application with $t \colonequals s+d$, we count the number of $(s,s+d,s+2d)$-cores for coprime $s,d\ge1$, verifying a recent conjecture of Amdeberhan and Leven.

%\todo[inline]{Clarify that one of these proofs (the ``symmetric'' proof) uses the generalization of Johnson's $z$-coordinates. Ask for Johnson's permission to perhaps add remark on significance of negative values of these generalized $z$-coordinates, and how these tie in to the ``symmetric'' proof of Amdeberhan--Leven conjecture.}
\end{abstract}

\end{arxiv}

\maketitle

\begin{ejc}

\begin{abstract}
Fix coprime $s,t\ge1$. We re-prove, without Ehrhart reciprocity, a conjecture of Armstrong (recently verified by Johnson) that the finitely many simultaneous $(s,t)$-cores have average size $\frac{1}{24}(s-1)(t-1)(s+t+1)$, and that the subset of self-conjugate cores has the same average (first shown by Chen--Huang--Wang). We similarly prove a recent conjecture of Fayers that the average weighted by an inverse stabilizer---giving the ``expected size of the $t$-core of a random $s$-core''---is $\frac{1}{24}(s-1)(t^2-1)$. We also prove Fayers' conjecture that the analogous self-conjugate average is the same if $t$ is odd, but instead $\frac{1}{24}(s-1)(t^2+2)$ if $t$ is even. In principle, our explicit methods---or implicit variants thereof---extend to averages of arbitrary powers.

The main new observation is that the stabilizers appearing in Fayers' conjectures have simple formulas in Johnson's $z$-coordinates parameterization of $(s,t)$-cores.

%\todo[inline]{Clarify that Johnson only parameterized $(s,t)$-cores, not $t$-cores in the general way (depending on a coprime parameter $s$), and that this general way can be useful (e.g. for the ``symmetric'' proof of Amdeberhan--Leven conjecture).}

We also observe that the $z$-coordinates extend to parameterize general $t$-cores. As an example application with $t \colonequals s+d$, we count the number of $(s,s+d,s+2d)$-cores for coprime $s,d\ge1$, verifying a recent conjecture of Amdeberhan and Leven.

%\todo[inline]{Clarify that one of these proofs (the ``symmetric'' proof) uses the generalization of Johnson's $z$-coordinates. Ask for Johnson's permission to perhaps add remark on significance of negative values of these generalized $z$-coordinates, and how these tie in to the ``symmetric'' proof of Amdeberhan--Leven conjecture.}

  % keywords are optional
  \bigskip\noindent \textbf{Keywords:} core partition; hook length; beta-set; group action; cyclic shift; Dyck path; rational Catalan number; Motzkin number; numerical semigroup
\end{abstract}

\end{ejc}

\section{Introduction}
\label{SEC:intro}

\subsection{History and motivation}

A \emph{partition} is an infinite weakly decreasing sequence $\lambda = (\lambda_1,\lambda_2,\ldots)$ of nonnegative integers with finite \emph{size} $\card{\lambda} \colonequals \lambda_1+\lambda_2+\cdots$. \begin{arxiv}(One can also think of partitions as \emph{finite} weakly decreasing sequences of \emph{positive} integers.)\end{arxiv} The \emph{Young diagram of $\lambda$} is the set $[\lambda] = \{(r,c)\in\ZZ_{>0}^2 : c \le \lambda_r\}$, often visualized as a set of $\#[\lambda] = \card{\lambda} $ boxes in some orientation. Reflecting $[\lambda]$ about the diagonal $r=c$ gives the diagram of the \emph{conjugate} partition $\conj{\lambda} = (\conj{\lambda}_1,\conj{\lambda}_2,\ldots)$, formally defined by $\conj{\lambda}_r \colonequals \#\{c\in\ZZ_{>0}: r \le \lambda_c\}$ for $r\ge1$; we say $\lambda$ is \emph{self-conjugate} if $\lambda = \conj{\lambda}$.

A partition $\lambda$ has, associated to each square $(r,c)\in [\lambda]$, a \emph{rim hook} $\{(i,j)\in[\lambda]: i \ge r,\; j\ge c,\text{ and }(i+1,j+1)\notin[\lambda]\}$ of (positive) \emph{rim hook length} $1+(\lambda_r - r) + (\conj{\lambda}_c - c)$---the same as the \emph{hook length} of the usual \emph{hook} $\{(i,c)\in[\lambda]: i \ge r\}\cup \{(r,j)\in[\lambda]: j\ge c\}$. \begin{arxiv}(For $(r,c)\notin [\lambda]$, one could extend the notion of hooks to get \emph{negative hook length}.)\end{arxiv} Importantly, \emph{removing} a rim hook of $\lambda$ leaves the Young diagram of a smaller partition. (See Figure \ref{FIG:rim-hook}.) Note that the (finite) set of hook lengths is invariant under conjugation.

When $s$ is a positive integer, we say a partition is an \emph{$s$-core} if it has no hooks of length $s$, or equivalently no \emph{rim $s$-hooks} (\emph{rim} hooks of length $s$); following Fayers \cite{Fayers}, we denote by $\mcal{C}_s$ the set of $s$-cores, and by $\mcal{D}_s\subseteq \mcal{C}_s$ the set of self-conjugate $s$-cores. More generally, any partition $\lambda$ has a unique \emph{$s$-core} $\lambda^s\in \mcal{C}_s$, given by repeatedly removing rim $s$-hooks. To prove that this \emph{$s$-core operation} $\lambda\mapsto \lambda^s$ is well-defined, one can use the \emph{beta-sets} reviewed in Section \ref{SEC:beta-sets}, which also show that $\lambda$ is an $s$-core if and only if it has no hook lengths \emph{divisible} by $s$, unifying two common definitions of $\mcal{C}_s$. These notions are connected to representation theory, symmetric function theory, and number theory (see e.g. \cite{F1,Fayers,Cranks-t-cores-expos}).

\begin{figure}
\[
\young(\ \ \ \ \bullet,\ \ \bullet\bullet\bullet)
\]
\caption{\emph{English notation} for $\lambda = (5,5,0,\ldots)$. Removing the displayed rim $4$-hook (of $(1,3)\in[\lambda]$) leaves the partition $(4,2,0,\ldots)$; removing the remaining rim $4$-hook leaves the $4$-core $\lambda^4 = (1,1,0,\ldots)$.}
\label{FIG:rim-hook}
\end{figure}

Going further, many authors (see e.g. \cite{Aggarwal,AAZ,Amol,AL,Anderson,Armstrong,AKS,CHW,F1,F2,Fayers,FV,FMS,Johnson,Olsson,OS,SZ,Vandehey,YZZ}) have recently considered the interaction of $s$-cores and $t$-cores (both the partitions and operations), for two positive integers $s,t$. For example, Anderson \cite{Anderson} showed that for coprime $s,t\ge1$, the set $\mcal{C}_s\cap\mcal{C}_t$ of \emph{(simultaneous) $(s,t)$-cores} has size equal to the number of \emph{$(s,t)$-Dyck paths}, which Bizley \cite{Bizley} had earlier enumerated---via `cyclic shifts'---as the `rational Catalan number' $\frac{1}{s+t}\binom{s+t}{t}$.\footnote{In fact, $(s,t)$-cores biject to subsets of $\ZZ_{\ge0}$ that contain $0$ and are closed under addition by $s,t$ (e.g. via \emph{beta-sets}, following negation and suitable translation), which are counted in \cite{ISL}. Through this bijection, \emph{numerical semigroups} containing $s,t$ inject into $\mcal{C}_s\cap\mcal{C}_t$.} Ford, Mai, and Sze \cite{FMS} later showed that for coprime $s,t\ge1$, the set $\mcal{D}_s\cap\mcal{D}_t$ of self-conjugate $(s,t)$-cores has size equal to the lattice path count $\binom{\floor{s/2}+\floor{t/2}}{\floor{t/2}}$. In a different direction, Olsson \cite{Olsson} showed that the $t$-core of an $s$-core is an $s$-core, hence a simultaneous $(s,t)$-core (as it is a $t$-core by definition).

In this paper, we mainly focus on related conjectures of Armstrong from \cite{Armstrong}, and Fayers from \cite{Fayers}, on certain weighted average sizes of $(s,t)$-cores when $s,t$ are coprime. Chen, Huang, and Wang \cite{CHW} established Armstrong's self-conjugate conjecture (Theorem \ref{THM:self-conjugate-Armstrong} below) using the Ford--Mai--Sze bijection \cite{FMS}. Using a poset formulation of Anderson's bijection \cite{Anderson}, Stanley and Zanello \cite{SZ} recursively established Armstrong's general conjecture (Theorem \ref{THM:general-Armstrong} below) for the `Catalan case $t=s+1$'; Aggarwal \cite{Aggarwal} generalized their method to the case $t\equiv 1\pmod{s}$. However, it is unclear whether a similar `Catalan-like' recursive structure holds for other choices of $s,t$. Recently, by different means described below, Johnson \cite{Johnson} fully proved Theorem \ref{THM:general-Armstrong}, and re-proved Theorem \ref{THM:self-conjugate-Armstrong}.

\begin{theorem}[{Armstrong \cite[Conjecture 2.6]{Armstrong}; Johnson \cite[Corollary 3.8]{Johnson}}]
\label{THM:general-Armstrong}

Fix coprime $s,t\ge1$. Then
\[ \frac{\sum_{\lambda\in \mcal{C}_s\cap \mcal{C}_t} \card{\lambda}}{\sum_{\lambda\in \mcal{C}_s\cap \mcal{C}_t} 1} = \frac{1}{24}(s-1)(t-1)(s+t+1), \]
where the sums run over \emph{all} $(s,t)$-core partitions.
\end{theorem}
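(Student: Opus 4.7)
The plan is to work with Johnson's $z$-coordinate parameterization of $\mcal{C}_s \cap \mcal{C}_t$ and compute the numerator $\sum_{\lambda \in \mcal{C}_s \cap \mcal{C}_t}\card{\lambda}$ directly; the denominator is just the rational Catalan number $\frac{1}{s+t}\binom{s+t}{t}$ already recalled in the introduction. The $z$-coordinates should attach to each $(s,t)$-core an $s$-tuple $(z_1,\dots,z_s)$ living in a specific ``balanced'' lattice region coming from the $s$-abacus of $\lambda$ cut out by the $t$-core condition, and from the beta-set identity $\card{\lambda}=\sum b_i - \binom{k}{2}$ one expects $\card{\lambda}$ to be an explicit quadratic polynomial in the $z_i$.

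First I would recall/make precise the $z$-coordinate parameterization and derive the quadratic formula for $\card{\lambda}$: this is bookkeeping on the $s$-abacus that tracks how reducing each runner modulo $t$ picks out the canonical $z_i$, together with a power-sum calculation converting the beta-set sum into the $z_i$. In particular, I would keep the cross-terms explicit, since these are what drive the final answer.

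Next, to perform the summation, I would use a cyclic-shift argument in the spirit of Bizley: the cyclic group $\ZZ/(s+t)\ZZ$ acts on a larger ``unconstrained'' set of lifts of $z$-configurations in such a way that each orbit has size $s+t$ and contains exactly one configuration corresponding to a valid $(s,t)$-core. Hence $\sum_{\lambda}\card{\lambda}$ equals $\frac{1}{s+t}$ times the sum over the full unconstrained set of the shift-averaged version of the quadratic statistic; the latter sum should decouple into independent univariate power sums over consecutive residues modulo $s$ and $t$, which are straightforward to evaluate in closed form via Bernoulli-type identities.

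The main obstacle will be the shift-averaging step: $\card{\lambda}$ itself is not cyclically invariant, so I must carefully track how each monomial $z_i$, $z_i^2$, $z_iz_j$ transforms under the shift and verify that the sum of its orbit averages over the enlarged set factors as claimed. Once that shift-average is organized as a symmetric quadratic expression, the resulting elementary sums telescope to the claimed value $\frac{1}{24}(s-1)(t-1)(s+t+1)$ after simplification, with no Ehrhart reciprocity input needed. The same framework should, with minor modifications tracking fixed points under conjugation or under a $\ZZ/2$-action, also yield the self-conjugate Armstrong average and the inverse-stabilizer-weighted averages of Fayers announced in the abstract.
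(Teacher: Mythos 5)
There is a genuine gap at the heart of your summation step: you have the wrong cyclic group acting on the wrong set. In the $z$-coordinate model, the $(s,t)$-cores form the set $\map{TD}_t(s)$ of $t$-tuples $(z_{t,0},\ldots,z_{t,t-1})\in\ZZ_{\ge0}^t$ with $\sum_i z_{t,i}=s$ and $\sum_i i\,z_{t,i}\equiv 0\pmod{t}$ (or symmetrically an $s$-tuple summing to $t$), and the relevant action is the order-$t$ \emph{rotation of coordinates} on the unconstrained simplex $\{x\in\ZZ_{\ge0}^t:\sum_i x_i=s\}$: shifting by $r$ changes $\sum_i i\,x_i$ by $-rs\pmod t$, so since $\gcd(s,t)=1$ each size-$t$ orbit meets $\map{TD}_t(s)$ exactly once (Proposition \ref{PROP:cyclic-shift}). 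There is no natural $\ZZ/(s+t)\ZZ$ action with orbits of size $s+t$ on lifts of $z$-configurations as you describe; that Bizley-type action lives on the lattice-word model of Anderson, where $\card{\lambda}$ is not a clean quadratic in the letters, and your claim that the shift-averaged sum "decouples into independent univariate power sums over consecutive residues modulo $s$ and $t$" evaluated by Bernoulli identities is unsupported — in every model the size statistic has genuinely mixed quadratic cross terms between positions, and these do not decouple.

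Moreover, the step you flag as the "main obstacle" — non-invariance of $\card{\lambda}$ under shifts — is actually a non-issue in the correct coordinates, and recognizing this is the crux of the paper's argument: by Lemma \ref{LEM:size-of-partition-z-coordinates}, $\card{\lambda}=-\frac{1}{24}(t^2-1)+\frac{1}{24}(t^2-1)\sum_{\ell}z_{t,\ell}^2+M_2(z_{t,0},\ldots,z_{t,t-1})$ where $M_2$ is a \emph{cyclic} homogeneous quadratic with only mixed terms, so the order-$t$ shift argument applies verbatim to each monomial class. You also need not "keep the cross-terms explicit": on the unconstrained simplex the sums $\sum x_i x_{i+r}$ coincide for every $r\not\equiv0\pmod t$, so only the coefficient sum of $M_2$ matters, and that is forced to be $-\frac{1}{24}t(t^2-1)$ by evaluating the size quadratic at $z=(1,\ldots,1)$. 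After that, the ordinary generating function evaluations of Corollary \ref{COR:special-cyclic-sums} (constant, linear, square-quadratic, mixed-quadratic cases) give the numerator, and dividing by the denominator $\frac1t\binom{s+t-1}{t-1}=\frac{1}{s+t}\binom{s+t}{t}$ yields $\frac{1}{24}(s-1)(t-1)(s+t+1)$. So while your high-level strategy (parameterize by $z$-coordinates, quadratic size formula, cyclic shifts, direct sums, no Ehrhart) is exactly the paper's, the combinatorial core as you wrote it — which group acts, on what set, and why each orbit contains exactly one valid point — is incorrect and the proof would not go through without replacing it by the order-$t$ rotation argument above.
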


\begin{theorem}[{Armstrong \cite[Conjecture 2.6]{Armstrong}; Chen--Huang--Wang \cite[proof in Section 2]{CHW}; Johnson \cite[Proposition 3.11]{Johnson}}]
\label{THM:self-conjugate-Armstrong}

Fix coprime $s,t\ge1$. Then
\[ \frac{\sum_{\lambda\in \mcal{D}_s\cap \mcal{D}_t} \card{\lambda}}{\sum_{\lambda\in \mcal{D}_s\cap \mcal{D}_t} 1} = \frac{1}{24}(s-1)(t-1)(s+t+1), \]
where the sums run over all \emph{self-conjugate} $(s,t)$-core partitions.
\end{theorem}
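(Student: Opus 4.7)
The plan is to adapt the $z$-coordinate strategy intended for Theorem~\ref{THM:general-Armstrong} to the self-conjugate setting, making essential use of Johnson's parameterization of $(s,t)$-cores together with the fact that $|\lambda|$ is expressible as an explicit (quadratic) polynomial in those coordinates.

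First, I would parameterize $\mcal{D}_s\cap\mcal{D}_t$ as the fixed locus of the conjugation involution $\sigma\colon\lambda\mapsto\conj{\lambda}$ acting on $z$-coordinate space. Since conjugation of partitions corresponds to taking the reflected complement of a beta-set (viewed inside $\ZZ$), $\sigma$ should act on $z$-coordinates as a simple affine involution---negation composed with a shift and/or an index permutation. The fixed locus should then itself be a product of lattice intervals, of roughly half the original dimension, and a direct count should recover the Ford--Mai--Sze total $\binom{\lfloor s/2\rfloor+\lfloor t/2\rfloor}{\lfloor t/2\rfloor}$, providing an internal consistency check before any summation.

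Next, I would sum the size polynomial $P(z)=|\lambda|$ over this fixed locus. Because $P$ is quadratic, the sum collapses to finitely many elementary lattice sums of the form $\sum_{k=0}^n k$ and $\sum_{k=0}^n k^2$; the $\sigma$-invariance of $P$ (forced by conjugation-invariance of $|\lambda|$) can be exploited to eliminate odd-order cross-terms and simplify bookkeeping. After dividing by the cardinality of the fixed locus, algebraic simplification should produce $\tfrac{1}{24}(s-1)(t-1)(s+t+1)$, matching the Armstrong general-case value.

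The main obstacle I anticipate is parity bookkeeping: since $\gcd(s,t)=1$, exactly one of the sub-cases (both $s,t$ odd; $s$ even and $t$ odd; $s$ odd and $t$ even) applies, and each slightly alters the dimensions and endpoints of the fixed locus, as well as whether the involution has an interior ``center'' lattice coordinate or swaps indices in pairs. Carrying the parity through the summation so that the final formula remains uniform is the place where the cancellation has to be verified carefully. A secondary concern is arranging the $z$-coordinate conventions so that conjugation acts as a transparent affine involution; if the natural coordinates do not have this property, an intermediate change of variables will be needed before the summation step.
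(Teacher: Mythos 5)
Your proposal is correct and is essentially the paper's own argument: the conjugation involution acts on Johnson's $z$-coordinates as the index permutation $z_{t,i}\mapsto z_{t,-i}$, whose fixed locus is exactly the paper's $u$-coordinate simplex $\{(u_0,\ldots,u_{t'})\in\ZZ_{\ge0}^{1+t'}: \sum u_i = s'\}$ of cardinality $\binom{s'+t'}{t'}$ (Proposition \ref{PROP:change-to-z-coordinates-self-conjugate}), over which the quadratic size polynomial (Lemma \ref{LEM:size-of-partition-z-coordinates-self-conjugate}) is summed by elementary binomial identities (Proposition \ref{PROP:special-u-coordinate-sums}), with the parity issues you anticipate (the center coordinate $z_{t,0}$, and additionally $z_{t,t'}$ with the evenness constraint $z_{t,t'}\equiv 0\pmod 2$ when $t$ is even) handled exactly as you describe. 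The only divergence is cosmetic: rather than carrying the even-$t$ bookkeeping through the summation, the paper dispatches that case by swapping the roles of $s$ and $t$, since the unweighted average is symmetric and coprimality forces one of them to be odd.
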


Developing Olsson's \cite{Olsson} and his own \cite{F1} ideas, Fayers soon after conjectured weighted analogs (Theorems \ref{THM:general-Fayers} and \ref{THM:self-conjugate-Fayers} below) of Armstrong's conjectures---in some sense giving the ``expected size of the $t$-core of a random $s$-core'' \cite{Fayers}. In this paper, we carry over explicit versions of Johnson's methods to establish both of Fayers' conjectures, despite the absence of an obvious `exponential' analog of Ehrhart reciprocity. We also briefly explain, in Remark \ref{RMK:conceptual-proof}, how one could give more implicit or ``conceptual'' proofs if necessary.

\begin{theorem}[{Fayers \cite[Conjecture 3.1]{Fayers}}]
\label{THM:general-Fayers}

Fix coprime $s,t\ge1$. Then
\[
\frac{
\sum_{\lambda\in \mcal{C}_s\cap \mcal{C}_t}
\card{\stab_{G_{s,t}}(\lambda)}^{-1}
\cdot \card{\lambda}
}{
\sum_{\lambda\in \mcal{C}_s\cap \mcal{C}_t}
\card{\stab_{G_{s,t}}(\lambda)}^{-1}
\cdot 1
}
=
\frac{1}{24}(s-1)(t^2-1), \]
where the sums run over \emph{all} $(s,t)$-core partitions, and the stabilizers are defined in terms of Fayers' `level $t$' group action on $\mcal{C}_s$ \cite{F1,Fayers} reviewed in Definition \ref{DEF:level-t-group-action}.
\end{theorem}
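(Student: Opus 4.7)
The plan is to parameterize $\mcal{C}_s\cap\mcal{C}_t$ by Johnson's $z$-coordinates $z=(z_1,\dots,z_{t-1})$ and exploit the paper's main observation that $\stab_{G_{s,t}}(\lambda)$ has a simple closed form in those coordinates. Under this parameterization, $(s,t)$-cores $\lambda$ correspond bijectively to lattice points in an explicit region $P_{s,t}\subset\RR^{t-1}$, and the size $\card{\lambda}$ becomes an explicit quadratic polynomial $Q(z)$. Both the numerator and denominator of the ratio in Theorem \ref{THM:general-Fayers} therefore reduce to finite lattice-point sums of $Q(z)$ (respectively $1$) against an explicit stabilizer weight $\card{\stab_{G_{s,t}}(\lambda(z))}^{-1}$.

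The first substantive step is to write $\card{\stab_{G_{s,t}}(\lambda(z))}$ as an arithmetic function of $z$. Since Fayers' level-$t$ action on $\mcal{C}_s$ is generated by certain cyclic shifts of beta-sets (see Definition \ref{DEF:level-t-group-action}), passing through Johnson's dictionary should convert this to a natural cyclic action of a group of order $t$ on the $z$-tuple, so that $\card{\stab}$ becomes a divisor of $t$ determined by a concrete invariance condition on the coordinates. Given such a formula, the weight $\card{\stab}^{-1}$ behaves as a symmetrization factor: the stabilizer-weighted sum over $z\in P_{s,t}$ unfolds into an ordinary (unweighted) sum over an enlarged fundamental domain for the cyclic action, on which the polynomial $Q(z)$ may be summed termwise by standard Faulhaber/Bernoulli identities. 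Crucially, this sidesteps Ehrhart reciprocity (which Johnson invoked in the unweighted case), because the sums here are honest integer sums rather than interpolated polytope volumes, matching the paper's stated emphasis on ``explicit methods.''

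Finally, dividing numerator by denominator and simplifying should collapse to $\tfrac{1}{24}(s-1)(t^2-1)$. The main obstacle is deriving and justifying the clean stabilizer formula, as this is the paper's central new input and requires a careful matching of Fayers' abstract level-$t$ action with a concrete geometric action on $z$-coordinates, with due attention to fixed points where multiple $z$-tuples may collapse to the same orbit. A secondary but more routine difficulty is the explicit polynomial summation; however, the symmetry of $Q$ under the cyclic shift, together with the unfolding trick above, should reduce the multivariable sum to an essentially one-variable identity, so the bookkeeping—while real—remains tractable and in principle generalizes to $\card{\lambda}^k$ for any $k\ge 0$.
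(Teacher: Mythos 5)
Your overall skeleton---parameterize $\mcal{C}_s\cap\mcal{C}_t$ by Johnson's $z$-coordinates, express $\card{\lambda}$ as an explicit cyclic quadratic, find a closed form for the stabilizer in those coordinates, and finish by explicit lattice-point summation---matches the paper. But your central guess is wrong: $\card{\stab_{G_{s,t}}(\lambda)}$ is \emph{not} a divisor of $t$ cut out by an invariance condition under a cyclic action of order $t$ on the $z$-tuple. The group $G_{s,t}$ is not generated by cyclic shifts; it is the infinite group of $s$-periodic, sum-normalized permutations of $\ZZ$ preserving residues modulo $t$ (Definition \ref{DEF:level-t-group-action}). By Proposition \ref{PROP:extending-bijections-of-s-sets-to-group-actions}, the elements fixing $\lambda$ correspond exactly to permutations of the $s$-set $\mcal{S}_s(\lambda)$ preserving residues modulo $t$, i.e.\ independent permutations within each class $\mcal{S}_s(\lambda)\cap(j+t\ZZ)$, so $\card{\stab_{G_{s,t}}(\lambda)} = \prod_{j\in\ZZ/t\ZZ}\card{\mcal{S}_s(\lambda)\cap(j+t\ZZ)}!$; and by Proposition \ref{PROP:s-set-t-set-interaction} these class counts are precisely the $z$-coordinates of $\lambda^t$, giving $\card{\stab_{G_{s,t}}(\lambda)} = \prod_{j\in\ZZ/t\ZZ} z_{t,j}!$ (Corollary \ref{COR:s-set-and-stabilizer-data-in-z-coordinates}). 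This is a product of factorials, generally far exceeding $t$: for $(s,t)=(5,2)$ the three cores have $(z_0,z_1)\in\{(5,0),(3,2),(1,4)\}$ with stabilizer orders $120$, $12$, $24$. Consequently $\card{\stab}^{-1}$ is not a symmetrization factor for any finite action on $z$-tuples, and your ``unfolding over an enlarged fundamental domain'' step, which presumes exactly that, collapses; no Faulhaber-type identity ever enters.

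What replaces it: $s!\cdot\card{\stab_{G_{s,t}}(\lambda)}^{-1} = \binom{s}{z_{t,0},\ldots,z_{t,t-1}}$ is a multinomial coefficient, so numerator and denominator (times $s!$) become ``exponential'' sums $\sum \binom{\abs{\bd{z}}}{\bd{z}} f(\bd{z})$ over $\map{TD}_t(s)$. A cyclic-shift lemma (Proposition \ref{PROP:cyclic-shift}) does appear, but it is logically independent of the stabilizer weight: the $\ZZ/t\ZZ$-rotation acts on the \emph{entire} simplex $\{x_j\ge0:\ \sum_{j} x_j = s\}$, and since $\gcd(s,t)=1$ every orbit is free and meets $\map{TD}_t(s)$ exactly once (the residue $\sum_j jx_j \bmod t$ shifts by $-rs$ under rotation by $r$), so the fixed-point worries you raise do not arise, and any \emph{cyclic} summand---weighted or not---pulls back as $\frac1t$ times the full-simplex sum. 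The resulting sums are evaluated by differentiating $(Z_0+\cdots+Z_{t-1})^s$ (Corollary \ref{COR:special-cyclic-sums}): with $\card{\lambda} = -\frac{1}{24}(t^2-1) + \frac{1}{24}(t^2-1)\sum_{\ell\in\ZZ/t\ZZ} z_{t,\ell}^2 + M_2$ from Lemma \ref{LEM:size-of-partition-z-coordinates}, the denominator times $s!$ is $t^{s-1}$, the numerator times $s!$ is $\frac{1}{24}(t^2-1)(s-1)t^{s-1}$ (the mixed terms $M_2$ cancel against the off-diagonal part of the squares, per Remark \ref{RMK:conceptual-cancellation-Fayers-general}), and the ratio is $\frac{1}{24}(s-1)(t^2-1)$. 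So repair your proposal by replacing the conjectured order-$t$ stabilizer with the factorial formula and the unfolding trick with the multinomial identity plus the cyclic-shift lemma; the rest of your outline then goes through.
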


\begin{theorem}[{Fayers \cite[Conjecture 4.5]{Fayers}}]
\label{THM:self-conjugate-Fayers}

Fix coprime $s,t\ge1$. Then
\[
\frac{
\sum_{\lambda\in \mcal{D}_s\cap \mcal{D}_t}
\card{\stab_{H_{s,t}}(\lambda)}^{-1}
\cdot \card{\lambda}
}{
\sum_{\lambda\in \mcal{D}_s\cap \mcal{D}_t}
\card{\stab_{H_{s,t}}(\lambda)}^{-1}
\cdot 1
}
=
\begin{cases}
\frac{1}{24}(s-1)(t^2-1) &\mbox{if } t \equiv 1 \pmod{2} \\ 
\frac{1}{24}(s-1)(t^2+2) & \mbox{if } t \equiv 0 \pmod{2}
\end{cases}, \]
where the sums run over all \emph{self-conjugate} $(s,t)$-core partitions, and the stabilizers are defined in terms of Fayers' `level $t$' group action on $\mcal{D}_s$ \cite{Fayers} reviewed in Definition \ref{DEF:level-t-group-action-self-conjugate}.
\end{theorem}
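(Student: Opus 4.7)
The plan is to adapt the $z$-coordinate approach that the paper announces for Theorem \ref{THM:general-Fayers} to the self-conjugate setting. The key point from the abstract is that $\card{\stab_{H_{s,t}}(\lambda)}^{-1}$ has a simple closed form in Johnson's $z$-coordinates; together with the standard quadratic formula for $\card{\lambda}$ in $z$-coordinates, this should reduce both sums to explicit lattice-point sums over a polytopal region, and the answer will emerge from elementary power-sum identities.

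First, I would make precise the $z$-coordinate parameterization of $\mcal{D}_s \cap \mcal{D}_t$. Conjugation of a partition $\lambda$ is an involution on beta-sets (negate and shift), so it acts as a concrete involution $\iota$ on the $z$-coordinates of $(s,t)$-cores; self-conjugate $(s,t)$-cores are exactly the fixed points of $\iota$. I expect $\iota$ to act by a sign change plus a permutation of the coordinates, so that the fixed locus is a ``half-dimensional'' sublattice of the $z$-coordinate region for $\mcal{C}_s \cap \mcal{C}_t$. The parity of $t$ should control whether this sublattice meets the axis of symmetry in a nontrivial ``middle coordinate'', which is the structural source of the dichotomy in the statement.

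Second, I would transcribe $\card{\stab_{H_{s,t}}(\lambda)}^{-1}$ and $\card{\lambda}$ into this self-conjugate $z$-coordinate region. Because $H_{s,t}$ is the analog of $G_{s,t}$ for self-conjugate cores \cite{Fayers}, its stabilizer should have a formula obtained from the one for $G_{s,t}$ by symmetrizing under $\iota$; this is where the paper's main observation does the real work. The size $\card{\lambda}$ restricted to the fixed locus remains a polynomial (essentially quadratic) in the remaining coordinates, of the same general shape used in the proof of Theorem \ref{THM:general-Fayers}.

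Third, I would evaluate the resulting numerator and denominator as iterated sums of polynomials over an explicit box/simplex in the self-conjugate $z$-lattice, using standard Faulhaber-style power-sum identities. Mirroring the proof of Theorem \ref{THM:general-Armstrong} (whose denominator is the Ford--Mai--Sze binomial $\binom{\lfloor s/2\rfloor+\lfloor t/2\rfloor}{\lfloor t/2\rfloor}$), the main obstacle will be careful bookkeeping of the $\iota$-fixed contribution: when $t$ is even, a ``middle'' coordinate pinned by $\iota$ contributes an extra boundary term of size $\tfrac{1}{8}(s-1)$, accounting exactly for the difference $\tfrac{1}{24}(s-1)(t^2+2) - \tfrac{1}{24}(s-1)(t^2-1) = \tfrac{1}{8}(s-1)$ between the two cases. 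If the direct sum manipulation becomes unwieldy, I would fall back on the implicit ``conceptual'' variant promised in Remark \ref{RMK:conceptual-proof} (specialized to the self-conjugate setting), recovering the formula by computing only a small number of leading coefficients and boundary corrections rather than the full sum.
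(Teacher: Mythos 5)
Your outline tracks the paper's actual proof closely: Proposition \ref{PROP:change-to-z-coordinates-self-conjugate} realizes $\mcal{D}_s\cap\mcal{D}_t$ as the fixed locus $z_{t,i}=z_{t,-i}$ of the conjugation involution inside the $z$-coordinate simplex, repackaged as $u$-coordinates $(u_0,\ldots,u_{t'})$ running over the \emph{full} simplex $u_0+\cdots+u_{t'}=s'$ (your predicted half-dimensional sublattice; note that because this is already a plain simplex, no cyclic-shift step is needed in the self-conjugate case, unlike for Theorem \ref{THM:general-Fayers}); Lemma \ref{LEM:size-of-partition-z-coordinates-self-conjugate} gives $\card{\lambda}$ as a quadratic on that locus, with the parity of $t$ entering exactly through the middle coordinate $z_{t,t'}$; and your consistency check that the two cases differ by $\tfrac18(s-1)$, traceable to the middle coordinate's cross terms, matches what happens in the paper's even-$t$ computation.

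The step you leave as a black box is, however, the crux, and ``symmetrizing the $G_{s,t}$ stabilizer formula under $\iota$'' would fail if executed literally as a symmetrized product of factorials. The stabilizer in $H_{s,t}$ counts permutations $\pi$ of the symmetric $s$-set with $\pi(m)\equiv m\pmod{t}$ \emph{and} $\pi(s-1-m)=s-1-\pi(m)$. On a pair of distinct residue classes $\{j,\,s-1-j\}\pmod{t}$ this contributes a single factorial (the permutation on one class forces the other), but on the classes fixed by $j\mapsto s-1-j$ --- exactly one such class when $t$ is odd, exactly two when $t$ is even, which is the true structural source of the dichotomy --- the count is the hyperoctahedral $2^{\floor{n/2}}\floor{n/2}!$, not $n!$. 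This yields Proposition \ref{PROP:stab-computation-self-conjugate}: $\card{\stab_{H_{s,t}}(\lambda)}=2^{u_0}\prod_{i=0}^{t'}u_i!$ for odd $t$ and $2^{u_0+u_{t'}}\prod_{i=0}^{t'}u_i!$ for even $t$. The extra powers of $2$ change the analytic character of your sums: the weighted denominator, times $s'!$, is $\sum_{\bd u}2^{-u_0}\binom{\abs{\bd u}}{\bd u}=(\tfrac{t}{2})^{s'}$ (resp.\ with $2^{-u_0-u_{t'}}$, giving the same value), an exponential sum with base $t/2$ handled by generating functions such as $\exp(\tfrac12 Z_0T)\prod_i\exp(Z_iT)$ as in Proposition \ref{PROP:special-u-coordinate-sums} --- not the Faulhaber-style polynomial sums you propose, which are the right tool only for the un-weighted Theorem \ref{THM:self-conjugate-Armstrong}. (Two small slips: the Ford--Mai--Sze binomial is the denominator of Theorem \ref{THM:self-conjugate-Armstrong}, not of Theorem \ref{THM:general-Armstrong}; and in $z$-coordinates the involution is the pure permutation $z_i\mapsto z_{-i}$, with no sign change.) With the stabilizer corrected, your plan reproduces the paper's proof; your fallback to the implicit variant of Remark \ref{RMK:conceptual-proof} is also endorsed by the paper, with the stated parity casework.
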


\begin{remark}
As Fayers notes in \cite{Fayers}, the orbits of $G_{s,t}$ and $H_{s,t}$ are infinite, so one weights by the inverses of the finite stabilizers instead. However, by considering finite quotients acting on certain finite subsets of $\mcal{C}_s$ and $\mcal{D}_s$, he also gives the weighted averages finite probabilistic interpretations that agree exactly, not just asymptotically, with the original averages.
\end{remark}

%\subsection{Backwards outline of paper (summary of methods)}

Johnson's \emph{$z$-coordinates} parameterization of $(s,t)$-cores, and our modest extension to general $t$-cores (see Proposition \ref{PROP:change-to-z-coordinates} for general cores and in Proposition \ref{PROP:change-to-z-coordinates-self-conjugate} for the self-conjugate specialization)---which depends on a choice of $s\ge1$ coprime to $t$---plays a key role in our paper, which rests upon his cyclic shifts argument for general cores. \begin{arxiv}The key argument (reviewed in Proposition \ref{PROP:cyclic-shift}) works because the size function for $t$-cores is \emph{cyclic} (invariant under rotation of coordinates), and the $z$-coordinates are (not invariant, but still) well-behaved under rotation.\end{arxiv} However, whereas Johnson finishes by `weighted Ehrhart reciprocity' (see \cite{Ehrhart} for an introduction to `un-weighted' Ehrhart theory), we will stick to flexible direct tools---and implicit variants thereof---which can in principle evaluate sums of arbitrary powers of the partition sizes (see Remark \ref{RMK:higher-power-methods}).

The main new observation is that the stabilizer sizes appearing in Fayers' conjectures (Theorems \ref{THM:general-Fayers} and \ref{THM:self-conjugate-Fayers}) have \emph{symmetric} (or almost symmetric) formulas in the $z$-coordinates, which simply re-index the restricted counts $\card{\mcal{S}_s(\lambda)\cap(j+t\ZZ)}$ of elements in Fayers' $s$-sets $\mcal{S}_s(\lambda)$ (from \cite{F1,Fayers}). In fact, we first prove (in Propositions \ref{PROP:s-set-t-set-interaction} and \ref{PROP:faithful-invariant}) that the sets $\mcal{S}_s(\lambda)\cap(j+t\ZZ)$ underlie the tools allowing us, in Fayers' words \cite{F1}, to ``[compare] the $t$-cores of different $s$-cores'' in the first place---thus illustrating the significance of $z$-coordinates.
%`12/27/15: `cementing the importance'' sounds a little haughty

As an application of the cyclic shifts in the (extended) $z$-coordinates, we also parameterize and then enumerate the simultaneous $(m,m+d,m+2d)$-cores for coprime $m,d\ge1$, verifying the following recent conjecture of Amdeberhan and Leven \cite{AL} in two different ways.

\begin{theorem}[Amdeberhan--Leven, {\cite[Conjecture 3.1]{AL}}]
\label{THM:Amdeberhan--Leven--conjecture}

Fix coprime $m,d\ge1$. Then
\[
\sum_{\lambda\in \mcal{C}_m\cap\mcal{C}_{m+d}\cap\mcal{C}_{m+2d}} 1
=\frac1{m+d}\sum_{i=0}^{\floor{m/2}} \binom{m+d}{i,i+d,m-2i}.
\]
(\cite{AL} also gives an equivalent expression using $\frac1{m+d}\binom{m+d}{i,i+d,m-2i} = \binom{m+d-1}{2i+d-1} \binom{2i+d}{i}\frac1{2i+d}$.)
\end{theorem}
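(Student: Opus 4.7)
\emph{Proof plan.} Set $t := m+d$. From $\gcd(m,d)=1$ we get $\gcd(t,d)=1$, hence $\gcd(m,t) = \gcd(m+2d,t) = 1$, so both additional core constraints fall under Johnson's coprime framework. Parameterize each $t$-core $\lambda$ by $z(\lambda) = (z_0, \ldots, z_{t-1}) \in \ZZ^t$ via Proposition~\ref{PROP:change-to-z-coordinates}. My plan is (i) to translate the $(m,t)$- and $(m+2d,t)$-core conditions into inequalities on the $d$-shifted differences $\delta_i := z_{(i+d) \bmod t} - z_i$; (ii) to rewrite the combined system in terms of a $\{-1,0,+1\}$-valued cyclic sequence; and (iii) to count the resulting configurations by a Bizley-style free cyclic action.

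Since $m \equiv -d$ and $m+2d \equiv d \pmod t$, I expect both Johnson-type simplex conditions to involve $\delta_i$, in opposite directions, with wrap-around corrections at the boundary $i+d = t$. Concretely, I expect the $(m,t)$-condition to give $\delta_i \ge -1$ for $i \in [0,m)$ and $\delta_i \ge -2$ for $i \in [m,t)$, while the $(m+2d,t)$-condition should give $\delta_i \le 1$ for $i \in [0,m)$ and $\delta_i \le 0$ for $i \in [m,t)$. Setting $\zeta_i := \delta_i + c_i$ with $c_i := 0$ when $i + d < t$ and $c_i := 1$ when $i + d \ge t$, the combined system reduces to $\zeta_i \in \{-1,0,+1\}$ for every $i \in \ZZ/t\ZZ$, together with
\[ \sum_{i=0}^{t-1} \zeta_i \;=\; \sum_{i=0}^{t-1} \delta_i + \#\{i : i+d \ge t\} \;=\; 0 + d \;=\; d. \]

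Let $X := \{\zeta \in \{-1,0,+1\}^t : \sum_i \zeta_i = d\}$; writing $i := \#\{j : \zeta_j = -1\}$ gives $\#\{j : \zeta_j = +1\} = i+d$ and $\#\{j : \zeta_j = 0\} = m - 2i$, so $|X| = \sum_{i=0}^{\lfloor m/2\rfloor} \binom{m+d}{i,\,i+d,\,m-2i}$. The cyclic rotation $\zeta \mapsto (\zeta_1, \ldots, \zeta_{t-1}, \zeta_0)$ acts on $X$ freely: any nontrivial stabilizer would force $\zeta$ to be $k$-periodic for some $1 < k \mid t$, giving $k \mid \sum_j \zeta_j = d$ and contradicting $\gcd(d,t)=1$. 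Hence $|X/(\ZZ/t\ZZ)| = \tfrac{1}{m+d} \sum_{i} \binom{m+d}{i,\,i+d,\,m-2i}$. By Proposition~\ref{PROP:cyclic-shift}, the rotation on $\zeta$ is the reduction of the cyclic-shift action on $z$-coordinates, so each orbit contains exactly one representative whose $z$ lies in the fundamental domain fixed by Proposition~\ref{PROP:change-to-z-coordinates}'s normalization (e.g., the unique $z$ whose partial-sum walk attains its minimum at a designated index); this $z$ is the desired $(m, m+d, m+2d)$-core, giving the claimed count.

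\emph{Main obstacle.} The hardest part will be step (i): pinning down the explicit Johnson-type inequalities for both $(m,t)$- and $(m+2d,t)$-cores in $z$-coordinates (with the wrap-around splits at $i = m$ on each side), and verifying that the $c_i$-shift in step (ii) cleanly bridges them into the uniform $\zeta_i \in \{-1,0,+1\}$ condition with $\sum \zeta_i = d$. Once this translation is in hand, the free cyclic action in step (iii) and the matching of orbits with cores via the fundamental-domain convention are routine.
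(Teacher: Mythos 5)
Your step (iii) endgame---counting $X=\{\zeta\in\{-1,0,1\}^t:\sum_i\zeta_i=d\}$ by the number $i$ of entries equal to $-1$ and dividing by a free cyclic action of order $t=m+d$---is exactly the combinatorial finish of the paper's ``symmetric proof,'' and that part is correct. The genuine gap is in steps (i)--(ii), which is where the proof actually lives. Written via Lemma~\ref{LEM:c,a-coordinate-parameterization-s,t-cores} with $t=m+d$, the two extra core conditions are $a_{t,i}\ge a_{t,i-d}-(t-d)$ (for $\mcal{C}_m$, since $m\equiv-d\pmod t$) and $a_{t,i}\ge a_{t,i+d}-(t+d)$ (for $\mcal{C}_{m+2d}$, since $m+2d\equiv d\pmod t$); together they give the \emph{uniform} two-sided bound $-t\le a_{t,i}-[a_{t,i+d}-d]\le t$ for \emph{every} $i\in\ZZ/t\ZZ$, with no index-dependent split at $i=m$ of the kind you predict. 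More importantly, these are bounds on first $d$-differences of the $a$-coordinates, whereas your $\delta_i:=z_{(i+d)\bmod t}-z_i$ is a difference of $z$-coordinates, hence a \emph{second} difference of $a$-coordinates: each $z_{t,j}$ already equals $\frac1t(a_{t,sj+k}-[a_{t,s(j+1)+k}-s])$ for whatever parameter $s$ you fix in Proposition~\ref{PROP:change-to-z-coordinates}. No choice of $s$ turns the $m$- and $(m+2d)$-core conditions into bounds on your $\delta_i$, so the guessed inequalities and the $c_i$-bridge have nothing to attach to. The missing idea is that the extension of Proposition~\ref{PROP:change-to-z-coordinates} to all of $\mcal{C}_t$ carries a free parameter $s$, and you should set $s:=d$ (coprime to $t$): then $z_{t,j}$ is itself the normalized $d$-difference $\frac1t(a_{t,dj+k}-[a_{t,d(j+1)+k}-d])$, and the combined condition is literally $z_{t,j}\in\{-1,0,1\}$ together with $\sum_j z_{t,j}=s=d$---no $\delta_i$, no wrap-around corrections. (Taking $s:=m$ instead also works, giving $z_{t,j}\in\{0,1,2\}$ with sum $m$.)

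Your matching of orbits with cores is also misstated. The selection rule built into Proposition~\ref{PROP:change-to-z-coordinates} is not a ``partial-sum-walk minimum'' fundamental domain but the congruence $\sum_{j\in\ZZ/t\ZZ}jz_{t,j}\equiv0\pmod t$, and the cyclic shifts identity in Proposition~\ref{PROP:cyclic-shift} shows rotation by $r$ changes $\sum_j jz_j$ by $-rd\pmod t$; since $\gcd(d,t)=1$, each rotation class of a sum-$d$ sequence contains \emph{exactly one} member satisfying the congruence. This simultaneously proves freeness of the action (your separate periodicity argument is correct but redundant) and identifies orbits with $(m,m+d,m+2d)$-cores, giving $\frac1t\sum_{i=0}^{\floor{m/2}}\binom{t}{i,\,i+d,\,m-2i}$. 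By contrast, your minimum-at-a-designated-index criterion is unjustified and generally false as a normal form: for steps in $\{-1,0,1\}$ with sum $d$ the walk's minimum need not be attained at a unique position (a cycle-lemma count shows $d$ rotations, not one, make all partial sums positive), and in any case you would still have to reconcile such a convention with the congruence that actually defines which tuples are $z$-coordinates of $t$-cores.
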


\begin{remark}
Following the recursive method of Stanley and Zanello \cite{SZ}, Amdeberhan and Leven \cite{AL} (and independently, Yang, Zhong, and Zhou \cite{YZZ}) proved the $d=1$ case via \emph{Motzkin number} interpretations. Both papers also proved other facts about $(m,m+1,m+2)$-cores.
\end{remark}

\begin{remark}
A few days after the arXiv postings of v2 of the present paper and v1 of \cite{AAZ}, Paul Johnson informed us (via private correspondence) that he had independently found our \emph{asymmetric proof} (in Section \ref{SEC:Amdeberhan--Leven--conjecture}) of Theorem \ref{THM:Amdeberhan--Leven--conjecture}. In fact, he proved the slightly stronger result that the term $\frac1{m+d}\binom{m+d}{i,i+d,m-2i}$ counts the number of $(m,m+d,m+2d)$-cores with exactly $i$ hooks of length $d$; see the end of Remark \ref{RMK:key-quantities-s-set-t-set-interaction} for a brief explanation.
\end{remark}

\subsection{Outline of paper}

In Section \ref{SEC:s-core-exposition}, we review the relevant definitions, terminology, and basic results about $s$-core partitions and the $s$-core operation on partitions, mostly from Fayers \cite{F1,Fayers} and Johnson \cite{Johnson}. Section \ref{SEC:t-core-of-s-core-exposition} provides the fundamental results on $t$-cores of $s$-cores, giving the background needed to state Theorems \ref{THM:general-Fayers} and \ref{THM:self-conjugate-Fayers} (Fayers' conjectures). We isolate our main new observation as Proposition \ref{PROP:s-set-t-set-interaction}, which first gives a cleaner proof of a key proposition from \cite{F1}, and later features in our stabilizer computations.

Section \ref{SEC:computation-inputs} describes the relevant computations for Section \ref{SEC:main-theorem-proofs} (on general cores): we compute the sizes of the stabilizers appearing in Theorem \ref{THM:general-Fayers} and express $s$-set data, stabilizer sizes, and partition sizes in Johnson's \emph{$z$-coordinates}. We also extend the $z$-coordinates, and review the cyclic shifts used to compute $z$-coordinate sums of cyclic functions, such as the stabilizer and partition sizes.

Section \ref{SEC:main-theorem-proofs} presents the main results of the paper, namely explicit proofs of Theorems \ref{THM:general-Armstrong} and \ref{THM:general-Fayers} (general conjectures). Section \ref{SEC:self-conjugate-analogs} presents self-conjugate analogs of the general analysis, building up to explicit proofs of Theorems \ref{THM:self-conjugate-Armstrong} and \ref{THM:self-conjugate-Fayers} (self-conjugate conjectures).

Section \ref{SEC:Amdeberhan--Leven--conjecture} uses cyclic shifts in the extended $z$-coordinates from Section \ref{SEC:z-coordinates-intro} to quickly prove Theorem \ref{THM:Amdeberhan--Leven--conjecture}. Section \ref{SEC:future-work} discusses possibilities for future work, including the question of calculating sums of higher powers (or moments) of the $(s,t)$-core partition sizes.

\section{Background: \texorpdfstring{$s$}{s}-core partitions and operation}
\label{SEC:s-core-exposition}

Experts can quickly skim this section for the notation used in our paper (as the literature seems to have many different conventions), particularly the framework of beta-sets (Section \ref{SEC:beta-sets}) for studying hooks, and two parameterizations of $s$-cores: Johnson's \emph{$c$-coordinates} (Section \ref{SEC:charge-intro}) from \cite{Johnson}, and Fayers' $s$-sets and \emph{$a$-coordinates} (Section \ref{SEC:s-sets-and-a-coordinates-vs-c-coordinates}) from \cite{F1,Fayers}.\begin{arxiv}

Recall the following definition from the introduction.

\begin{sidedef}
\label{REF:SIDE:s-core-definitions}

Fix $s\ge1$. As in \cite{Fayers}, let $\mcal{C}_s$ denote the set of \emph{$s$-cores} $\lambda$, i.e. partitions with no rim $s$-hooks (or equivalently, no traditional hooks of length $s$). For any $s,t\ge1$, we often call $\mcal{C}_s\cap \mcal{C}_t$ the set of \emph{(simultaneous) $(s,t)$-cores}.
\end{sidedef}

\begin{sidedef}
\label{REF:SIDE:s-core-operation-definition}

For an arbitrary partition $\lambda$, say an $s$-core partition $\mu$ is an \emph{$s$-core of $\lambda$} if it can be obtained (starting from $\lambda$ itself) by repeatedly removing rim $s$-hooks. (Clearly $\lambda$ is an $s$-core if and only if $\lambda$ is an $s$-core of $\lambda$ itself.)
\end{sidedef}

\begin{sidermk}
As remarked in the introduction, there is in fact a unique such $s$-core $\mu$ (of $\lambda$) by Proposition \ref{REF:s-core-operation-is-well-defined}, which we denote by $\lambda^s$. So $\lambda$ is an $s$-core if and only if $\lambda = \lambda^s$.
\end{sidermk}
\end{arxiv}

\subsection{Beta-sets}
\label{SEC:beta-sets}

\begin{figure}
\centering

\trimbox{0cm 1.7cm 0cm 0cm}{
\begin{tikzpicture}
\begin{scope}[gray, very thin, scale=.6]
\clip (-5.5, 5.5) rectangle (5.5, -5);
\draw[rotate=45, scale=1.412] (0,0) grid (6,6);
\end{scope}

\begin{scope}[rotate=45, very thick, scale=.6*1.412]
\draw (0,5.5) -- (0, 3) -- (1,3) -- (1,2) -- (3,2) -- (3,0) -- (5.5,0);
\end{scope}

\begin{scope}[scale=.6, dotted]

\draw (-4.5,0) -- (-4.5, 4.5);
\draw (-3.5,0) -- (-3.5, 3.5);
\draw (-2.5,0) -- (-2.5, 3.5);
\draw (-1.5,0) -- (-1.5, 3.5);
\draw (-.5,0) -- (-.5, 3.5);
\draw (.5,0) -- (.5, 4.5);
\draw (1.5,0) -- (1.5, 4.5);
\draw (2.5,0) -- (2.5, 3.5);
\draw (3.5,0) -- (3.5, 3.5);
\draw (4.5,0) -- (4.5, 4.5);
\end{scope}

\begin{scope}[scale=.6, yshift=-.5cm]
\draw[solid] (0,-.5) -- (0,.5);
\draw (-5.5,0) node{$\cdots$};
\draw (-4.5,0) circle (.3) node[below=3pt]{$\frac{9}{2}$};
\draw (-3.5,0) circle (.3) node[below=3pt]{$\frac{7}{2}$};
\filldraw (-2.5,0) circle (.3) node[below=3pt]{$\frac{5}{2}$};
\draw (-1.5,0) circle (.3) node[below=3pt]{$\frac{3}{2}$};
\filldraw (-.5,0) circle (.3) node[below=3pt]{$\frac{1}{2}$};
\filldraw (.5,0) circle (.3) node[below=3pt]{$\frac{-1}{2}$};
\draw (1.5,0) circle (.3) node[below=3pt]{$\frac{-3}{2}$};
\draw (2.5,0) circle (.3) node[below=3pt]{$\frac{-5}{2}$};
\filldraw (3.5,0) circle (.3) node[below=3pt]{$\frac{-7}{2}$};
\filldraw (4.5,0) circle (.3) node[below=3pt]{$\frac{-9}{2}$};
\draw (5.5,0) node{$\cdots$};
\end{scope}
\end{tikzpicture}
}

\caption{\emph{Russian notation} for $\lambda = (3,2,2,0,\ldots)$, from Johnson's arXiv source \cite{Johnson}. (As pointed out by an anonymous expert, the idea itself has been used earlier by Okounkov and Reshetikhin.) The infinite \emph{rim boundary} is thickened. A \emph{rim edge} has a filled circle below if and only if the edge \emph{slopes upwards}. By convention, our labeling increases from \emph{right to left} (opposite the usual Cartesian $x$-axis).} \label{FIG:johnson-russian-notation}
\end{figure}

To each partition one naturally associates a \emph{beta-set} illuminating the hook length structure. It is often easier to work with beta-sets than with partitions themselves.

\begin{definition}[{\cite[Section 2.2]{Johnson}}]
\label{REF:beta-value-set-of-partition}

For any (infinite weakly decreasing) partition $\lambda = (\lambda_1,\lambda_2,\ldots)$, let $B^\lambda\subset\ZZ$ denote the (infinite) \emph{beta-set} of \emph{beta-values}, i.e. integers $x$ such that the the \emph{rim edge} (with midpoint) at position $x+\frac12$ \emph{slopes upwards} in Figure \ref{FIG:johnson-russian-notation}.
\end{definition}

\begin{remark}
Most sources (e.g. \cite{F1,Fayers,James-expos}) instead define $B^\lambda \colonequals \{\lambda_i - i\}_{i\ge1}$. The definitions are equivalent by a simple coordinate geometry argument. Some sources, notably Anderson \cite{Anderson} (and the poset method users \cite{SZ,Aggarwal,AL,Amol}, by extension) and Vandehey \cite{Vandehey}, instead distinguish the \emph{finite} set $(B^\lambda + r)\cap \ZZ_{>0} = \{(\lambda_1 - 1) + r,\ldots,(\lambda_r - r) + r\}$ of \emph{first column hook lengths}, where $r$ denotes the \emph{length} of $\lambda$, defined so that $\lambda_1 \ge \cdots \ge \lambda_r > 0 = \lambda_{r+1} = \cdots$. This perspective works particularly well for the study of \emph{maximal cores} (see e.g. \cite{Vandehey} for $(s,t)$-cores and \cite{Amol} for $(a,b,c)$-cores).
\end{remark}

\begin{proposition}[{Robinson \cite[2.8]{Robinson-expos}}; {Johnson \cite[Section 2.3.1]{Johnson}}]
\label{REF:beta-interpret-hooks}

A rim $s$-hook is parameterized (uniquely) by an element $x\in B^\lambda \setminus (B^\lambda + s)$. In particular, a partition $\lambda$ is an $s$-core if and only if $B^\lambda - s \subseteq B^\lambda$.
\end{proposition}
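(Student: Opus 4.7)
The plan is to translate the rim geometry of $\lambda$ in Russian notation directly into beta-set language, from which both statements of the proposition fall out. By Definition \ref{REF:beta-value-set-of-partition}, reading rim-edge midpoints from right to left in Figure \ref{FIG:johnson-russian-notation}, an upward-sloping edge at $x+\tfrac12$ records $x\in B^\lambda$ and a downward-sloping edge records $x\notin B^\lambda$. So $B^\lambda$ determines (and is determined by) $\lambda$; for $|x|$ large the rim is a straight diagonal, with all sufficiently negative $x$ lying in $B^\lambda$ and all sufficiently positive $x$ lying outside it.

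First, given $x\in B^\lambda\setminus(B^\lambda+s)$ — equivalently $x\in B^\lambda$ and $x-s\notin B^\lambda$ — I would define $\lambda'$ via the swapped beta-set $B^{\lambda'} \colonequals (B^\lambda\setminus\{x\})\cup\{x-s\}$. This exchanges the slopes of the rim edges at midpoints $x+\tfrac12$ and $x+\tfrac12 - s$, and leaves all other rim edges unchanged. The central claim is that the skew shape $\lambda/\lambda'$ consists of exactly $s$ cells forming a single rim hook; conversely, I would check that every rim $s$-hook arises from a unique such $x$ (read off as the beta-value corresponding to the rightmost upward edge of the hook).

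The main technical step is to verify this claim, and it is the step I expect to be the main obstacle, though the work is essentially bookkeeping. One shows three things about $\lambda/\lambda'$: (i) it has exactly $s$ cells, by translating the slope swap at horizontal displacement $s$ into a difference in the Russian-rim ``height function'' whose integral over the interval $[x-s,x]$ equals $s$; (ii) it is edge-connected along the rim, because the two paths (before and after) agree outside this interval and swap a single up/down pair at its two endpoints; and (iii) it contains no $2\times 2$ square, since inside the interval the old and new rims never coincide (any coincidence would correspond to a strictly shorter valid beta-value swap). These three properties characterize a single rim $s$-hook. This is a standard chase through Russian notation / the James abacus, and I would refer to an existing exposition such as \cite{James-expos} rather than grind through coordinates.

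Once the bijection is established, the ``in particular'' clause is immediate: $\lambda$ has no rim $s$-hook iff $B^\lambda\setminus(B^\lambda+s)=\emptyset$, iff $B^\lambda\subseteq B^\lambda+s$, iff $B^\lambda-s\subseteq B^\lambda$.
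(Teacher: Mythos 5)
Your proof is correct, but it takes a heavier route than the paper's. The paper's proof of Proposition \ref{REF:beta-interpret-hooks} is a single direct observation: a rim $s$-hook is precisely the strip of cells along the segment of the rim whose rightmost bounding edge (at position $x+\tfrac12$) slopes upwards and whose edge at position $x-s+\tfrac12$ slopes downwards, so rim $s$-hooks correspond immediately to elements $x\in B^\lambda\setminus(B^\lambda+s)$ --- no smaller partition is constructed and nothing about removal is invoked. You instead define $\lambda'$ by the swapped beta-set and verify that $\lambda/\lambda'$ is a ribbon of $s$ cells; this in effect proves the paper's \emph{next} result, Proposition \ref{REF:beta-interpret-hook-removal} (removal of the hook is exactly the swap $x\mapsto x-s$), and then extracts the parameterization from it. That buys you the removal statement for free, at the cost of your three-part verification --- which, incidentally, collapses into one observation you did not quite make: on the interior $(x-s,x)$ the old and new rim paths have \emph{identical} slope sequences while the two endpoint edges are swapped oppositely, so the paths run at a constant offset of one diagonal step and rejoin at both ends; the region between two such parallel paths is automatically an edge-connected, one-cell-wide strip of exactly $s$ cells containing no $2\times2$ square. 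In particular your stated justification for (iii) (a coincidence inside the interval ``would correspond to a strictly shorter valid beta-value swap'') is not the right mechanism --- coincidence is impossible simply because the offset is constant and nonzero. Your uniqueness direction (reading $x$ off the rightmost upward edge of a given hook) and the final equivalence $B^\lambda\setminus(B^\lambda+s)=\emptyset \iff B^\lambda-s\subseteq B^\lambda$ agree with the paper.
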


\begin{markednewpar}\leavevmode\end{markednewpar}\begin{arxiv}
\begin{proof}
In Figure \ref{FIG:johnson-russian-notation}, a rim $s$-hook is parameterized by an upwards-sloping rim edge at position $x+\frac12$ (i.e. $x\in B^\lambda$) followed by a downwards-sloping rim edge at position $x-s+\frac12$ (i.e. $x-s\notin B^\lambda$). This is equivalent to $x\in B^\lambda \setminus (B^\lambda + s)$.
\end{proof}

\begin{sidermk}
The $s$-core criterion $B^\lambda - s \subseteq B^\lambda$ is due to Robinson \cite[2.8]{Robinson-expos} (according to \cite{Fayers}). As mentioned in the introduction, it shows that $\lambda\in \mcal{C}_s$ if and only if $\lambda$ has no hook of length \emph{divisible} by $s$, establishing an alternative common definition of $\mcal{C}_s$.
\end{sidermk}

\begin{sidermk}

Fix coprime $s,t\ge1$. Then via beta-sets (more precisely, after negation and suitable translation), the \emph{numerical semigroups} (subsets of $\ZZ_{\ge0}$ that contain $0$, are closed under addition, and contain all sufficiently large integers) containing $s,t$ inject into the set of $(s,t)$-cores---as mentioned in the introduction and the future work sections.
\end{sidermk}
\end{arxiv}

\subsection{Johnson's signed `charge' measure \texorpdfstring{$c$}{c}}
\label{SEC:charge-intro}

In light of Proposition \ref{REF:beta-interpret-hooks}, one would like to have a clean description of possible beta-sets. The basic tool for this is \emph{charge}, which also parameterizes $s$-cores in Corollary \ref{REF:c-coordinates-parameterization-of-s-cores}.

\begin{definition}[c.f. {\cite[Definition 2.1 and Section 2.2]{Johnson}}; \cite{Cranks-t-cores-expos}; \cite{Physics-expos}]
\label{REF:c-coordinates-charge-definition}

Call a set $S\subseteq\ZZ$ \emph{good} if $S\cap\ZZ_{>0}$ and $\ZZ_{<0}\setminus S$ are both finite. For good $S$, define the signed \emph{$s$-charge} measure
\[
c_{s,i}(S)
\colonequals
\#[(-1-i+s\ZZ)\cap(\ZZ\setminus S)\cap\{x+\tfrac12<0\}]
-
\#[(-1-i+s\ZZ)\cap S\cap\{x+\tfrac12>0\}]
\]
for any $s\ge1$ and $i\in\ZZ/s\ZZ$. Then $\sum_{i\in\ZZ/s\ZZ} c_{s,i}(S) = c_{1,0}(S)$ is the \emph{total charge} of $S$.

For any partition $\lambda$, we may define $c_{s,i}(\lambda) \colonequals c_{s,i}(B^\lambda)$, since $B^\lambda$ is good.
\end{definition}

\begin{remark}
We will not use the notions of \emph{electron}, \emph{positron}, and \emph{Maya diagram} from Johnson's exposition.
\end{remark}

The basic importance of charge is given by the following charge condition.

\begin{proposition}[{\cite[Section 2.2]{Johnson}}]
\label{REF:c-coordinates-charge-sum-condition}

Fix $s\ge1$. Via beta-sets, partitions are parameterized (uniquely) by good sets $S$ (as defined in Definition \ref{REF:c-coordinates-charge-definition}) with $\sum_{i\in\ZZ/s\ZZ} c_{s,i}(S)$ equal to $0$.
\end{proposition}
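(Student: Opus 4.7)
The plan is to exhibit a two-sided inverse to the assignment $\lambda \mapsto B^\lambda$. As a preliminary simplification, I would first observe that summing $c_{s,i}(S)$ over $i\in\ZZ/s\ZZ$ collapses the residue classes $-1-i+s\ZZ$ to all of $\ZZ$, yielding
\[ \sum_{i\in\ZZ/s\ZZ} c_{s,i}(S) \;=\; \#(\ZZ_{<0}\setminus S) \;-\; \#(S\cap\ZZ_{\ge 0}), \]
so the condition $\sum_i c_{s,i}(S) = 0$ is independent of $s$. It then suffices to work with the equivalent arithmetic description $B^\lambda = \{\lambda_i - i : i \ge 1\}$ from the remark after Definition \ref{REF:beta-value-set-of-partition}.

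For the forward direction---that $B^\lambda$ is good and has total charge zero---I would argue as follows. Let $\ell$ denote the length of $\lambda$, so that $\lambda_i - i = -i$ for all $i > \ell$; hence $B^\lambda$ agrees with $B^\emptyset = \ZZ_{<0}$ outside the finite interval $[-\ell,\lambda_1-1]$, and both sets contain exactly $\ell$ elements inside it (the strictly decreasing values $\lambda_1 - 1, \ldots, \lambda_\ell - \ell$ on one side, and $-1,\ldots,-\ell$ on the other). Splitting this interval at $0$ then gives $\#(B^\lambda\cap\ZZ_{\ge 0}) = \#(\ZZ_{<0}\setminus B^\lambda)$ immediately.

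For the reverse direction, given a good charge-zero set $S$, enumerate its elements in strictly decreasing order as $x_1 > x_2 > \cdots$ (possible since $S$ is bounded above but infinite) and set $\lambda_i \colonequals x_i + i$. The checks that $\lambda_i\ge\lambda_{i+1}$ and that $\lambda_i = 0$ for sufficiently large $i$ are routine; the latter uses the charge condition in the form $\#(S\cap[-N,\infty)) = N$, valid once $\ZZ_{<-N}\subseteq S$, to force $x_i = -i$ for $i\ge N$. The one nontrivial step---and the main obstacle---is verifying nonnegativity $\lambda_i\ge 0$, equivalently $\#(S\cap[-i,\infty)) \ge i$ for all $i\ge 1$. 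I would handle this by downward induction on $i$, starting from the equality at $i=N$ and noting that $\#(S\cap[-i,\infty))$ decreases by at most $1$ when $i$ drops by $1$. This is precisely where the charge-zero hypothesis is essential: it ensures $S$ corresponds to an honest partition rather than a shifted or "charged" variant. Verifying that the two constructions are mutually inverse is then a routine unwinding of the definitions.
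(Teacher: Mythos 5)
Your proof is correct. Note that the paper itself gives no proof of this proposition---it is stated as background with a citation to Johnson's Section 2.2---so there is no internal argument to compare against; Johnson's treatment runs through the Maya-diagram/charge picture (partitions correspond to zero-total-charge configurations of the ``Dirac sea''), whereas you give the elementary arithmetic bijection via $B^\lambda=\{\lambda_i-i\}_{i\ge1}$, which the paper's remark after Definition \ref{REF:beta-value-set-of-partition} licenses you to use, and which is the standard alternative and entirely adequate here. Your steps all check out: the total charge indeed collapses to $\#(\ZZ_{<0}\setminus S)-\#(S\cap\ZZ_{\ge0})$, independent of $s$, since the classes $-1-i+s\ZZ$ partition $\ZZ$; the forward count comparing $B^\lambda$ with $B^\emptyset=\ZZ_{<0}$ on the window $[-\ell,\lambda_1-1]$ is right (both sets agree outside it and contain exactly $\ell$ points inside it); and the downward induction for nonnegativity, using that $f(i)\colonequals\#(S\cap[-i,\infty))$ drops by at most one as $i$ decreases together with the equality $f(i)=i$ whenever $\ZZ_{<-i}\subseteq S$, is precisely where the charge-zero hypothesis does its work, as you say. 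One trivial slip worth fixing: from $f(N)=N$ you may conclude $x_i=-i$ only for $i>N$, not $i\ge N$---the element $x_N$ need only lie in $[-N,\infty)$, as in $S=(\ZZ_{<0}\setminus\{-1\})\cup\{5\}$ with $N=1$, where $x_1=5$---but this does not affect the conclusion $\lambda_i=0$ for $i>N$, and the mutual-inverse check is indeed routine since $\lambda_i-i$ is strictly decreasing, so the decreasing enumeration of $B^\lambda$ recovers $\lambda$.
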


Combining the $s$-core criterion $B^\lambda - s \subseteq B^\lambda$ (from Proposition \ref{REF:beta-interpret-hooks}) with the preceding zero charge invariant yields the following simple \emph{$c$-coordinates parameterization} of $s$-cores. For the conversion to Fayers' \emph{$a$-coordinates}, see Section \ref{SEC:s-sets-and-a-coordinates-vs-c-coordinates}.

\begin{corollary}[{Johnson \cite[Lemma 2.8]{Johnson}; Garvan--Kim--Stanton \cite[Bijection 2]{Cranks-t-cores-expos}}]
\label{REF:c-coordinates-parameterization-of-s-cores}

Fix $s\ge1$. The $s$-cores are parameterized (uniquely) by $s$-tuples $(c_{s,i})_{i\in\ZZ/s\ZZ}\in \ZZ^s$ summing to $0$.
\end{corollary}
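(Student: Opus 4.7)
The plan is to combine Proposition \ref{REF:c-coordinates-charge-sum-condition} (partitions correspond via $\lambda \mapsto B^\lambda$ to good sets of total charge zero) with Proposition \ref{REF:beta-interpret-hooks} ($s$-cores among these are cut out by the closure condition $S - s \subseteq S$), reducing the corollary to showing that good $s$-closed sets $S$ are in bijection with integer $s$-tuples $(c_{s,i})_{i\in\ZZ/s\ZZ}$, with the zero-sum hyperplane corresponding exactly to the partition constraint.

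First I would slice $S$ by residue class modulo $s$: fix representatives $i \in \{0,1,\ldots,s-1\}$ and set $S_i \colonequals S \cap (-1-i+s\ZZ)$, so that $S = \bigsqcup_i S_i$. The condition $S - s \subseteq S$ is equivalent to each slice $S_i$ being closed under subtraction by $s$ within its residue class. Combined with goodness---$S \cap \ZZ_{>0}$ finite bounds each $S_i$ above, while $\ZZ_{<0}\setminus S$ finite forces each $S_i$ to contain every sufficiently negative element of its class---this pins each $S_i$ down to a downward ray
\[
S_i = \{-1-i+sn : n \le K_i\}
\]
for a unique $K_i \in \ZZ$.

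Next I would compute $c_{s,i}(S)$ in terms of $K_i$. For $0 \le i < s$, the ``negative'' part $\{x+\tfrac12<0\}$ of the class $-1-i+s\ZZ$ corresponds to $n \le 0$ and the ``positive'' part $\{x+\tfrac12>0\}$ to $n \ge 1$, so a short case split on the sign of $K_i$ gives $c_{s,i}(S) = -K_i$ in every case. Hence the tuple $(-K_i)$ determines and is determined by $S$, and the reverse construction---given any $(c_{s,i}) \in \ZZ^s$, set $S \colonequals \bigsqcup_i \{-1-i+sn : n \le -c_{s,i}\}$---manifestly produces a good $s$-closed set with the prescribed charges. Proposition \ref{REF:c-coordinates-charge-sum-condition} then cuts out the hyperplane $\sum_i c_{s,i} = 0$ as the locus of actual partitions, completing the bijection.

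There is no real obstacle here: the argument is elementary once the slicing is in hand, and the only thing to watch is the $x+\tfrac12$ bookkeeping convention in Definition \ref{REF:c-coordinates-charge-definition} together with the choice of residue representatives. The conceptual content is simply that $s$-closure decouples $B^\lambda$ into $s$ independent residue-class rays, each captured by a single integer (its ``top''), which up to sign is exactly $c_{s,i}$.
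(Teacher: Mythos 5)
Your proposal is correct and takes essentially the same route as the paper, which derives this corollary in a single line by combining the $s$-core criterion $B^\lambda - s \subseteq B^\lambda$ of Proposition \ref{REF:beta-interpret-hooks} with the zero-total-charge parameterization of Proposition \ref{REF:c-coordinates-charge-sum-condition}; your residue-class slicing, the identification of each slice as a downward ray with top index $K_i$, and the verification $c_{s,i}(S) = -K_i$ simply supply the routine details the paper leaves implicit.
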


\subsection{The \texorpdfstring{$s$}{s}-core operation is well-defined}

Beta-sets not only parameterize rim $s$-hooks (Proposition \ref{REF:beta-interpret-hooks}), but also conveniently describe the \emph{removal} of rim $s$-hooks, as follows.

\begin{proposition}[{\cite[Section 2.3.1]{Johnson}}]
\label{REF:beta-interpret-hook-removal}

The removal (from a partition $\lambda$) of a rim $s$-hook, say parameterized by $x\in B^\lambda \setminus (B^\lambda + s)$, corresponds to an \emph{$s$-push}: replacing the element $x$ of $B^\lambda$ by $x-s$. The process also \emph{preserves the charge $s$-tuple} $(c_{s,i})_{i\in\ZZ/s\ZZ}$.
\end{proposition}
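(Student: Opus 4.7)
The plan is to handle the two assertions separately, using the Russian-notation rim picture of Figure \ref{FIG:johnson-russian-notation} as the central visualization.

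For the first assertion, I would argue directly from the picture. The condition $x \in B^\lambda \setminus (B^\lambda + s)$ says that the rim edge at position $x + \tfrac{1}{2}$ slopes upwards while the rim edge at position $x - s + \tfrac{1}{2}$ slopes downwards. The $s$ boxes of the rim hook parameterized by $x$ (as in Proposition \ref{REF:beta-interpret-hooks}) occupy exactly the strip bounded by these two rim edges, and deleting these boxes swaps the orientations of the two bounding edges while leaving every intermediate rim edge unchanged. In beta-set language, $x$ leaves $B^\lambda$ and $x - s$ enters it, which is precisely the $s$-push. Equivalently, one may use the standard convention $B^\lambda = \{\lambda_i - i\}_{i \ge 1}$ and verify that removing a rim $s$-hook spanning rows $r$ through $r'$ replaces $\lambda_r - r$ by $(\lambda_{r'} - r') - 1$, which equals $(\lambda_r - r) - s$ by a standard rim-hook bookkeeping.

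For the second assertion (charge preservation), the key observation is that $x$ and $x - s$ lie in the same residue class mod $s$, so the $s$-push can only alter $c_{s,i_0}$ for the index $i_0$ with $-1 - i_0 \equiv x \pmod{s}$; every other $c_{s,i}$ is automatically preserved. Writing $\Delta c$ for the change in $c_{s,i_0}$, I would evaluate it via a short case analysis on the sign pattern of $(x, x - s)$. If $x$ and $x - s$ both lie on the same side of $0$, the relevant contribution to $c_{s,i_0}$ (an element of $S$ on the positive side, or an element of $S^c$ on the negative side) is simply transferred from one position to another on the same side, giving $\Delta c = 0$. In the remaining case $x \ge 0 > x - s$, we lose a positive-side element of $S$ (contributing $+1$ to $\Delta c$) and simultaneously lose a negative-side element of $S^c$ (contributing $-1$); again $\Delta c = 0$.

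The main obstacle, if any, is rigorously justifying the strip picture underlying the first assertion---namely that every intermediate rim edge strictly between positions $x - s + \tfrac{1}{2}$ and $x + \tfrac{1}{2}$ is preserved under the hook removal, and that exactly $s$ boxes get deleted. Both facts can be extracted cleanly from the defining property $(i+1,j+1) \notin [\lambda]$ of rim hooks, which forces the removed boxes to trace the rim, together with the identification (via Definition \ref{REF:beta-value-set-of-partition}) of rim-edge slopes with membership in $B^\lambda$. Once this bookkeeping is in place, both the $s$-push description and the charge invariance follow quickly from the case analyses above.
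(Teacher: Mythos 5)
Your proof is correct and essentially identical to the paper's: the same geometric edge-swap argument in Figure \ref{FIG:johnson-russian-notation} for the $s$-push (removal swaps the slopes at positions $x+\frac12$ and $x-s+\frac12$ and fixes all others), and the same reduction of charge invariance to the single residue class $-1-i\equiv x\pmod{s}$, with your three-case sign analysis simply making explicit the verification the paper leaves as ``one can then check.'' One slip in your parenthetical alternative via $B^\lambda = \{\lambda_i - i\}_{i\ge1}$: for a rim $s$-hook at corner $(r,c)$ spanning rows $r$ through $r'=\conj{\lambda}_c$, the beta value $\lambda_r - r$ is replaced by $c - r' - 1 = (\lambda_r - r) - s$, not by $(\lambda_{r'}-r')-1$; for instance, for $\lambda = (5,5)$ and the rim $4$-hook of $(1,3)$ (Figure \ref{FIG:rim-hook}), the value $4$ becomes $0$, whereas your formula predicts $2$. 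Since that aside is never used by your main argument, the proof stands as written.
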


\begin{markednewpar}\leavevmode\end{markednewpar}\begin{arxiv}
\begin{proof}
In Figure \ref{FIG:johnson-russian-notation}, a simple geometric argument shows that removing the rim $s$-hook parameterized by $x\in B^\lambda \setminus (B^\lambda + s)$ simply \emph{swaps} the slopes (\emph{up} or \emph{down}) of the \emph{rim edges} at positions $x+\frac12$ and $x-s+\frac12$, while preserving the slopes at all other positions. This corresponds to replacing $x\in B^\lambda$ by $x-s$, and leaving the other elements alone.

Furthermore, replacing $x\in B^\lambda\setminus(B^\lambda + s)$ by $x-s$ preserves the charge $s$-tuple $(c_{s,i})_{i\in\ZZ/s\ZZ}$; by Definition \ref{REF:c-coordinates-charge-definition}, there is nothing to check when $-1-i \not\equiv x\pmod{s}$. One can then check that $c_{s,-1-x}$ stays constant.
\end{proof}

\begin{sidermk}

In the \emph{infinite $s$-abacus} terminology in the literature---see e.g. \cite{F1,Johnson}---the \emph{$s$-push} is described as ``pushing the bead at $x$ by $s$ to the unfilled position $x-s$''; or in the electron diagram (see Figure \ref{FIG:johnson-russian-notation}) of \cite{Johnson} as ``inverting the filled energy state $x+\frac12$ with the unfilled energy state $x-s+\frac12$'' (the authors of \cite{Physics-expos} would likely also describe the operation this way).
\end{sidermk}
\end{arxiv}

\begin{definition}[c.f. {\cite[Section 2.3. Abaci.]{Johnson}}]
\label{REF:s-push-of-beta-set}

The \emph{$s$-push of a good set $S$} is the well-defined result of repeatedly applying the \emph{$s$-pushes} defined in Proposition \ref{REF:beta-interpret-hook-removal}. Explicitly, the $s$-push of $S$ can be described as follows:
\begin{itemize}
\item Fix a residue class $i+s\ZZ$; then $S \cap \{i+s\ZZ\}$ takes the form $\{\ldots,x-2s,x-s,x,x+\alpha_1 s,\ldots,x+\alpha_k s\}$ for any sufficiently small $x\in S\cap\{i+s\ZZ\}$.

\item Then the $s$-push of $S$, restricted to the residue class $i+s\ZZ$, is $\{\ldots,x-2s,x-s,x,x+s,\ldots,x+ks\}$.
\end{itemize}

\end{definition}

Repeatedly applying Proposition \ref{REF:beta-interpret-hook-removal} shows that the $s$-core operation is well-defined.

\begin{proposition}[c.f. {\cite[Section 2.3.1]{Johnson}}]
\label{REF:s-core-operation-is-well-defined}

Every partition $\lambda$ has a unique \emph{$s$-core}\begin{arxiv} (under Side Definition \ref{REF:SIDE:s-core-operation-definition})\end{arxiv}, denoted by $\lambda^s \in \mcal{C}_s$. Its beta-set $B^{\lambda^s}$ is the \emph{$s$-push} of the original beta-set $B^\lambda$, so $c_{s,i}(\lambda) = c_{s,i}(\lambda^s)$ for all $i\in\ZZ/s\ZZ$.
\end{proposition}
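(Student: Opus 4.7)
The plan is to deduce the proposition from the two preceding facts: that removing a rim $s$-hook is exactly an $s$-push on the beta-set (Proposition \ref{REF:beta-interpret-hook-removal}), and that having no rim $s$-hook is equivalent to $B^\lambda - s \subseteq B^\lambda$ (Proposition \ref{REF:beta-interpret-hooks}). The argument breaks into termination, identification of the terminal beta-set, and uniqueness via charge.

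First I would note termination: each rim $s$-hook has size $s\ge1$, so every removal strictly decreases the nonnegative integer $\card{\lambda}$ by $s$, and the process must halt after at most $\card{\lambda}/s$ steps with some partition $\mu$ having no rim $s$-hooks, i.e.\ $\mu\in\mcal{C}_s$. Next I would translate the whole process to beta-sets via Proposition \ref{REF:beta-interpret-hook-removal}: each removal corresponds to choosing some $x\in B^\lambda\setminus(B^\lambda+s)$ and replacing it with $x-s$, and this operation preserves each $c_{s,i}$.

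The key point is then to show that the terminal good set $B^\mu$ depends only on $B^\lambda$, namely it is the $s$-push defined in Definition \ref{REF:s-push-of-beta-set}. The terminal condition $B^\mu \setminus (B^\mu + s) = \emptyset$, equivalent to $B^\mu - s \subseteq B^\mu$, says precisely that within each residue class $i+s\ZZ$, the set $B^\mu\cap(i+s\ZZ)$ is downward-closed under subtraction by $s$, hence of the form $\{x_i, x_i-s, x_i-2s,\ldots\}$ for some $x_i\in\ZZ$. Moreover each individual $s$-push preserves $B\cap(i+s\ZZ)$ for $i\not\equiv x\pmod{s}$, so residue classes evolve independently, and within the class of $x$ the $s$-push only moves the largest ``gap-leaving'' element one step down. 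Iterating, the final set in residue class $i+s\ZZ$ must consist of the $\card{(B^\lambda\cap(i+s\ZZ))\cap \ZZ_{\ge0}} + \card{(i+s\ZZ)\cap\ZZ_{<0}\setminus B^\lambda}$ largest elements of $i+s\ZZ$ that are $<$ an appropriate cutoff; equivalently, the cutoff $x_i$ is pinned down by the invariant $c_{s,-1-i}(B^\mu) = c_{s,-1-i}(B^\lambda)$.

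To make the uniqueness crisp I would invoke the charge parameterization: by Proposition \ref{REF:c-coordinates-charge-sum-condition}, a good set is determined by the condition ``$B-s\subseteq B$'' together with its charge vector $(c_{s,i})_{i\in\ZZ/s\ZZ}$, because the former forces each $B\cap(i+s\ZZ)$ to be a downward ray, and the latter fixes its top element. Since all intermediate $s$-pushes preserve every $c_{s,i}$, the terminal beta-set—and hence $\mu$—is uniquely determined by $B^\lambda$, justifying the notation $\lambda^s$ and giving $c_{s,i}(\lambda)=c_{s,i}(\lambda^s)$. The only mildly delicate step is the ``residue classes evolve independently'' observation; but since an $s$-push in class $j$ changes neither the membership nor the charge contributions of classes $j'\ne j$, this is immediate from Definition \ref{REF:c-coordinates-charge-definition}, and no ordering argument (e.g.\ diamond lemma) is actually needed.
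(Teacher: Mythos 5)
Your proof is correct and takes essentially the same route as the paper, which simply applies Proposition \ref{REF:beta-interpret-hook-removal} repeatedly and relies on the per-residue-class description in Definition \ref{REF:s-push-of-beta-set}; you merely make explicit the details the paper leaves implicit (termination via $\card{\lambda}$ dropping by $s$, and confluence via the independence of residue classes plus the charge invariant pinning the top of each terminal downward ray). One nitpick: the uniqueness step rests on Definition \ref{REF:c-coordinates-charge-definition} (charge determining the top element of each ray) rather than on Proposition \ref{REF:c-coordinates-charge-sum-condition} as cited, but your inline argument is self-contained, so this is only a misplaced reference.
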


\begin{markednewpar}\leavevmode\end{markednewpar}\begin{arxiv}
%in future work
\begin{sidermk}

As explained in \cite[Bijection 1]{Cranks-t-cores-expos}, the $s$-core operation leads to a nontrivial formula for the generating function for $s$-cores, indexed by size. The authors also study the generating function for self-conjugate $s$-cores.
\end{sidermk}
\end{arxiv}

\subsection{Fayers' \texorpdfstring{$s$}{s}-sets versus Johnson's \texorpdfstring{$c$}{c}-coordinates}
\label{SEC:s-sets-and-a-coordinates-vs-c-coordinates}

For any $s$-core $\lambda$, let $a_{s,i}(\lambda) \colonequals s+ \max [B^{\lambda}\cap (i + s\ZZ)]$ for each $i\in\ZZ$. But Definition \ref{REF:c-coordinates-charge-definition} and Corollary \ref{REF:c-coordinates-parameterization-of-s-cores} give $c_{s,-1-i}(\lambda)$ in terms of $B^{\lambda}\cap(i+s\ZZ)$. Comparing the two descriptions gives $a_{s,i} = i - sc_{s,-1-i}$ for $0\le i\le s-1$. In particular, $\sum_{i\in\ZZ/s\ZZ} c_{s,i} = 0$ (Proposition \ref{REF:c-coordinates-charge-sum-condition}) is equivalent to $\sum_{i\in\ZZ/s\ZZ} a_{s,i} = \binom{s}{2}$.

\begin{proposition}[Fayers \cite{F1}; {\cite[Section 3.3]{Fayers}}]
\label{REF:s-sets}

Fix $s\ge1$. The $s$-cores are parameterized (uniquely) by \emph{$s$-sets} $\mcal{S}_s = \{a_{s,i}\}_{i\in\ZZ/s\ZZ}$ of \emph{$a$-coordinates} summing to $\binom{s}{2}$ with $a_{s,i} \equiv i\pmod{s}$ for all $i\in\ZZ$. Explicitly, $\mcal{S}_s(\lambda) \colonequals (B^{\lambda}+s)\setminus B^{\lambda}$ for $\lambda\in \mcal{C}_s$.
\end{proposition}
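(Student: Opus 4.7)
The plan is to transfer the already-established parameterization in $c$-coordinates (Corollary \ref{REF:c-coordinates-parameterization-of-s-cores}) to $a$-coordinates via the linear relation $a_{s,i} = i - s c_{s,-1-i}$ derived immediately before the proposition statement, and then to verify the explicit formula $\mcal{S}_s(\lambda) = (B^\lambda + s)\setminus B^\lambda$ by working one residue class $i + s\ZZ$ at a time.

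First I would check that $a_{s,i}(\lambda)$ is well-defined for $\lambda \in \mcal{C}_s$. Goodness of $B^\lambda$ gives $B^\lambda \cap \ZZ_{>0}$ finite, so $B^\lambda \cap (i+s\ZZ)$ is bounded above; goodness also gives $\ZZ_{<0}\setminus B^\lambda$ finite, so the intersection is nonempty. The $s$-core condition $B^\lambda - s \subseteq B^\lambda$ (Proposition \ref{REF:beta-interpret-hooks}) then forces $B^\lambda \cap (i+s\ZZ) = (-\infty,m_i]\cap(i+s\ZZ)$ for some $m_i \in i+s\ZZ$, and I set $a_{s,i} \colonequals m_i + s$. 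By construction $a_{s,i}\equiv i\pmod s$, and the $a_{s,i}$ with $0\le i\le s-1$ are pairwise distinct, so the multiset $\{a_{s,i}\}_{i\in\ZZ/s\ZZ}$ is actually a set of size $s$.

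Next I would use the identity $a_{s,i} = i - s c_{s,-1-i}$ (for $0\le i\le s-1$) verified from Definition \ref{REF:c-coordinates-charge-definition}: both sides count the same $s$-push data restricted to the residue class $i+s\ZZ$. This gives a bijection between tuples $(c_{s,j})_{j\in\ZZ/s\ZZ}\in\ZZ^s$ and tuples $(a_{s,i})_{i\in\ZZ/s\ZZ}$ of integers with $a_{s,i}\equiv i\pmod s$. Under this bijection, the condition $\sum_{j}c_{s,j} = 0$ translates, by summing $a_{s,i} = i - s c_{s,-1-i}$ over $0\le i\le s-1$, to $\sum_{i} a_{s,i} = \binom{s}{2}$. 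Combining this with Corollary \ref{REF:c-coordinates-parameterization-of-s-cores} yields the claimed parameterization of $\mcal{C}_s$ by $s$-sets.

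Finally, for the explicit formula, I would restrict to a fixed residue class $i+s\ZZ$. From the downward-closed description $B^\lambda \cap (i+s\ZZ) = (-\infty,m_i]\cap(i+s\ZZ)$ we get $(B^\lambda + s)\cap(i+s\ZZ) = (-\infty,m_i+s]\cap(i+s\ZZ)$, hence
\[
\bigl((B^\lambda + s)\setminus B^\lambda\bigr)\cap(i+s\ZZ) = \{m_i + s\} = \{a_{s,i}(\lambda)\}.
\]
Taking the union over $i\in\ZZ/s\ZZ$ gives $\mcal{S}_s(\lambda) = (B^\lambda+s)\setminus B^\lambda$. I expect no serious obstacle; the only delicate point is bookkeeping the residue-class conventions (especially the index shift $i\leftrightarrow -1-i$ between $a$- and $c$-coordinates), and keeping straight that goodness plus the $s$-core condition forces each residue class of $B^\lambda$ to be a half-line, which is the structural fact underlying everything.
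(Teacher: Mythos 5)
Your proposal is correct and follows essentially the same route as the paper: the paragraph preceding Proposition \ref{REF:s-sets} defines $a_{s,i}(\lambda) \colonequals s + \max[B^\lambda \cap (i+s\ZZ)]$, derives the same identity $a_{s,i} = i - sc_{s,-1-i}$ by comparing with Definition \ref{REF:c-coordinates-charge-definition}, and transfers the parameterization and sum condition from Corollary \ref{REF:c-coordinates-parameterization-of-s-cores} exactly as you do. Your residue-class half-line argument (that goodness plus $B^\lambda - s \subseteq B^\lambda$ forces $B^\lambda \cap (i+s\ZZ) = (-\infty, m_i]\cap(i+s\ZZ)$, giving $((B^\lambda+s)\setminus B^\lambda)\cap(i+s\ZZ) = \{a_{s,i}\}$) is the same structural fact the paper's definition rests on, just spelled out more explicitly.
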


\begin{remark}
In view of $c_{s,i}(\lambda) = c_{s,i}(\lambda^s)$ from Proposition \ref{REF:s-core-operation-is-well-defined}, it would be meaningful to define $a_{s,i}(\lambda) \colonequals a_{s,i}(\lambda^s)$ for \emph{any} $\lambda$. However, we will only speak of $s$-sets and $a$-coordinates of $s$-cores. Charge will suffice for our greater needs in Proposition \ref{PROP:conjugate-charge} and Corollary \ref{COR:conjugation-commutes-with-s-core-operation}.
\end{remark}

\section{Background: \texorpdfstring{$t$}{t}-cores of \texorpdfstring{$s$}{s}-cores}
\label{SEC:t-core-of-s-core-exposition}

In this section, we go through the fundamental results on $t$-cores of $s$-cores. In particular, we isolate the crucial Proposition \ref{PROP:s-set-t-set-interaction}, meanwhile giving a cleaner proof of a key result of Fayers (see Proposition \ref{PROP:faithful-invariant}). This motivates Fayers' `level $t$ action on $s$-cores' (Definition \ref{DEF:level-t-group-action}), defining the stabilizers in Theorem \ref{THM:general-Fayers} (Fayers' general conjecture).

\subsection{When is a \texorpdfstring{$t$}{t}-core an \texorpdfstring{$s$}{s}-core?}

The following simple criterion parameterizes $\mcal{C}_s\cap\mcal{C}_t$ in $a_t$-coordinates. It implicitly appears throughout this paper and elsewhere.

\begin{lemma}[{Johnson \cite[Lemma 3.1]{Johnson}; re-worded by Fayers \cite[Lemma 3.8]{Fayers}}]
\label{LEM:c,a-coordinate-parameterization-s,t-cores}

In Fayers' $a$-coordinates (see Proposition \ref{REF:s-sets}), the set $\mcal{C}_s\cap\mcal{C}_t$ of $s$-cores \emph{within the affine lattice $\mcal{C}_t$ of $t$-cores} is defined by the system of inequalities $a_{t,i} \ge a_{t,i+s} - s$ for $i\in \ZZ/t\ZZ$.
\end{lemma}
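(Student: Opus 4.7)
The plan is to reduce the $s$-core condition, applied to a $t$-core, to a condition on the $a_t$-coordinates via the beta-set description. Concretely, by Proposition \ref{REF:beta-interpret-hooks}, a partition $\lambda$ is an $s$-core if and only if $B^\lambda - s \subseteq B^\lambda$. So I will start with a $t$-core $\lambda$, use the fact that its beta-set splits neatly along residue classes $\pmod{t}$, and translate $B^\lambda - s \subseteq B^\lambda$ into inequalities among the $a_{t,i}$'s.

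First, I will unpack what the $t$-core hypothesis means at the level of beta-sets. Since $\lambda \in \mcal{C}_t$, we have $B^\lambda - t \subseteq B^\lambda$, which forces each residue class to be downward-closed: for each $i \in \ZZ/t\ZZ$,
\[ B^\lambda \cap (i+t\ZZ) = \{\,y \in i + t\ZZ : y \le a_{t,i} - t\,\}, \]
using that $a_{t,i} = t + \max[B^\lambda \cap (i+t\ZZ)]$ by Proposition \ref{REF:s-sets}. Thus within each residue class the beta-set is determined by a single number $a_{t,i} - t$.

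Next I apply the $s$-core criterion. For each $x \in B^\lambda$, say $x \in i + t\ZZ$, we need $x - s \in B^\lambda$, which lives in the residue class $(i-s) + t\ZZ$. By the preceding paragraph, this is equivalent to
\[ x - s \;\le\; a_{t,\,i-s} - t. \]
Ranging over all $x \in B^\lambda \cap (i + t\ZZ)$, the binding constraint comes from the maximum $x = a_{t,i} - t$, and indeed if the inequality holds at this maximum it holds for all smaller $x$ in the residue class (since decreasing $x$ by multiples of $t$ only makes $x - s$ smaller, preserving membership). Hence the $s$-core condition on $\lambda$ is equivalent to
\[ a_{t,i} - t - s \;\le\; a_{t,\,i-s} - t \qquad \text{for every } i \in \ZZ/t\ZZ, \]
i.e., $a_{t,\,i-s} \ge a_{t,i} - s$; re-indexing by $j \colonequals i-s$ gives the stated system $a_{t,j} \ge a_{t,j+s} - s$.

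No real obstacle arises: everything is a direct translation between the beta-set criterion and the $a$-coordinates. The only small thing to be careful about is the equivalence between checking $x - s \in B^\lambda$ for all $x$ and checking it just at the maximum $x$ of each residue class, which I verified above. Indices $i$ and $i+s$ are of course interpreted $\pmod{t}$, and the lifts to $\ZZ$ chosen for the $a$-coordinates do not affect the inequalities since both sides of $a_{t,j} \ge a_{t,j+s} - s$ use the same convention.
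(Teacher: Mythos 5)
Your proof is correct and takes essentially the same approach as the paper's: both use the $t$-core criterion $B^\lambda - t \subseteq B^\lambda$ to see that each residue class of $B^\lambda$ modulo $t$ is downward-closed with maximum $a_{t,i}-t$, and then reduce the $s$-core criterion $B^\lambda - s \subseteq B^\lambda$ to a comparison of these maxima. The only cosmetic difference is that you organize the check by the residue class of $x$ and re-index at the end, whereas the paper directly compares $(B^\lambda - s)\cap(i+t\ZZ)$ with $B^\lambda\cap(i+t\ZZ)$.
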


\begin{markednewpar}\leavevmode\end{markednewpar}\begin{arxiv}
\begin{proof}
The proof is the same as the first half of the proof of Proposition \ref{PROP:s-set-t-set-interaction}. Suppose $\lambda\in \mcal{C}_t$, i.e. the $t$-core criterion $B^\lambda - t \subseteq B^\lambda$ holds (from Proposition \ref{REF:beta-interpret-hooks}). Then by definition of $a_{t,i}\equiv i\pmod{t}$ and $a_{t,i+s}\equiv i+s\pmod{t}$, we have
\begin{align*}
B^\lambda\cap (i+t\ZZ) &= \{\ldots,a_{t,i}-2t,a_{t,i}-t\} \\
(B^\lambda - s)\cap (i+t\ZZ) &= \{\ldots,[a_{t,i+s}-s]-2t,[a_{t,i+s}-s]-t\},
\end{align*}
for any $i\in\ZZ$. Thus the $s$-core criterion $B^\lambda - s\subseteq B^\lambda$ holds if and only if the inequalities $a_{t,i+s} - s \le a_{t,i}$ hold for all $i\in\ZZ/t\ZZ$.
\end{proof}
\end{arxiv}

\subsection{Comparing \texorpdfstring{$t$}{t}-cores of different \texorpdfstring{$s$}{s}-cores}

Propositions \ref{PROP:extending-Olsson-theorem}, \ref{PROP:s-set-t-set-interaction}, and \ref{PROP:faithful-invariant} below are the key conceptual inputs for comparing $t$-cores of various $s$-cores. We start by extending Olsson's theorem \cite{Olsson}, following Fayers \cite{F1}.

\begin{proposition}[c.f. {\cite[proof of Proposition 4.1]{F1}}]
\label{PROP:extending-Olsson-theorem}
Fix \emph{any} $s,t\ge1$ and $\lambda\in \mcal{C}_s$. Then $\lambda^t\in \mcal{C}_s\cap \mcal{C}_t$, and furthermore $\mcal{S}_s(\lambda^t) \equiv \mcal{S}_s(\lambda) \pmod{t}$ (viewed as multisets of residues).
\end{proposition}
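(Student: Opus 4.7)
The plan is to analyze the proposition one residue class mod $t$ at a time. Write $S \colonequals B^\lambda$ and $T \colonequals B^{\lambda^t}$, and for each $j \in \ZZ/t\ZZ$ set $S^{(j)} \colonequals S \cap (j + t\ZZ)$ and $T^{(j)} \colonequals T \cap (j + t\ZZ)$. By Definition \ref{REF:s-push-of-beta-set} and Proposition \ref{REF:s-core-operation-is-well-defined}, the $t$-push acts independently on each residue class: each $T^{(j)}$ has the form $\{\ldots, M_j - 2t, M_j - t, M_j\}$ for a unique $M_j \equiv j \pmod{t}$ (explicitly, $M_j = a_{t,j}(\lambda^t) - t$) that depends on $S^{(j)}$ only through its $t$-charge $c_{t,-1-j}(\lambda)$ of Definition \ref{REF:c-coordinates-charge-definition}.

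First I would prove $\lambda^t \in \mcal{C}_s$. The $s$-core criterion $S - s \subseteq S$ from Proposition \ref{REF:beta-interpret-hooks} decomposes per class as $S^{(j)} - s \subseteq S^{(j')}$, where $j' \equiv j - s \pmod{t}$, for every $j \in \ZZ/t\ZZ$. After $t$-pushing, each compressed runner $T^{(j)}$ has a unique maximum $M_j$, so the analogous condition $T^{(j)} - s \subseteq T^{(j')}$ collapses to the single inequality $M_j - s \le M_{j'}$. To deduce this: the containment $S^{(j)} - s \subseteq S^{(j')}$ of good subsets of $j' + t\ZZ$ gives a corresponding inequality of ``runner charges'' via monotonicity, and the $t$-push preserves each runner charge (this is Proposition \ref{REF:beta-interpret-hook-removal} applied per class), so the compressed maxima $M_j$ and $M_{j'}$ inherit the shifted inequality.

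Next I would establish the multiset identity $\mcal{S}_s(\lambda^t) \equiv \mcal{S}_s(\lambda) \pmod{t}$ by showing
\[
\#\bigl(\mcal{S}_s(\lambda) \cap (j + t\ZZ)\bigr) = \#\bigl(\mcal{S}_s(\lambda^t) \cap (j + t\ZZ)\bigr)
\]
for every $j \in \ZZ/t\ZZ$. Since $\mcal{S}_s(\mu) = (B^\mu + s) \setminus B^\mu$, the left-hand count equals $\#\bigl((S^{(j')} + s) \setminus S^{(j)}\bigr)$ and the right-hand count equals $\#\bigl((T^{(j')} + s) \setminus T^{(j)}\bigr)$, where in both cases the two sets live inside $j + t\ZZ$. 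The $s$-closedness of $S$ (and, by Step~1, of $T$) gives the inclusions $S^{(j)} \subseteq S^{(j')} + s$ and $T^{(j)} \subseteq T^{(j')} + s$, so each difference is a finite set whose size is a function only of the two relevant runner charges and the fixed shift $s$. Since $t$-pushing preserves every runner charge, the two sides agree.

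The main obstacle is the bookkeeping of runner charges and the shift by $s$ across residue classes: one must precisely define a charge-style counting on good subsets of a single class $j + t\ZZ$ (most cleanly, by identifying $j + t\ZZ$ with $\ZZ$ via $j + kt \mapsto k$ and invoking the charge of Definition \ref{REF:c-coordinates-charge-definition}), verify that $\#(A \setminus B)$ for $B \subseteq A$ good in that class equals the difference of their charges, and track how translation by $s$ relates the charges on classes $j$ and $j'$ (the translation adds a controlled integer depending on $s,j,j',t$). Once these routine identities are in place, both assertions collapse to the charge-preservation of $t$-pushing, i.e. the per-class content of Proposition \ref{REF:beta-interpret-hook-removal}.
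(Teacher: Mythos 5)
Your proposal is correct and is essentially the paper's own non-inductive argument (flagged in the remark after the proof sketch and carried out in the alternative proof): compare $B^\lambda$ and $B^\lambda - s$ runner by runner modulo $t$, then observe that the $t$-push compresses each runner while preserving both the containment (giving $\lambda^t\in\mcal{C}_s$) and the per-class count (giving $\mcal{S}_s(\lambda^t)\equiv\mcal{S}_s(\lambda)\pmod{t}$). The only difference is bookkeeping: you track runner charges and their monotonicity under containment, where the paper writes each runner explicitly relative to a sufficiently small base point $u_j$ with exponent sequences $\{\alpha_1,\ldots,\alpha_k\}\subseteq\{\beta_1,\ldots,\beta_\ell\}$ and reads off the count $\ell-k$ directly before and after pushing.
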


\begin{proof}[Proof sketch]

Given $\lambda\in \mcal{C}_s$, Fayers inductively constructs a sequence of \emph{$s$-core partitions} $\lambda^{(0)},\ldots,\lambda^{(-m)}$ (with $m\ge0$) such that $\lambda^{(0)} = \lambda$, the term $\lambda^{(-i-1)}$ is an $s$-core obtained from $\lambda^{(-i)}$ by removing a certain sequence of rim $t$-hooks, and $\lambda^{(-m)}\in \mcal{C}_s\cap \mcal{C}_t$. He shows, under this construction, that $\mcal{S}_s(\lambda^{(-i-1)}) \equiv \mcal{S}_s(\lambda^{(-i)}) \pmod{t}$ for $0\le i < m$, so $\mcal{S}_s(\lambda^{(-m)}) \equiv \mcal{S}_s(\lambda)\pmod{t}$. But $\lambda^{(-m)} = \lambda^t$ by uniqueness of the $t$-core (Proposition \ref{REF:s-core-operation-is-well-defined}).
\end{proof}

\begin{remark}
One can avoid induction by first comparing $(B^{\lambda} - s)\cap (j+t\ZZ)$ with $B^{\lambda}\cap (j+t\ZZ)$ as $j\in\ZZ$ varies, and then (in view of Proposition \ref{REF:s-core-operation-is-well-defined}) the \emph{$t$-pushes} $(B^{\lambda^t} - s)\cap (j+t\ZZ)$ and $B^{\lambda^t}\cap (j+t\ZZ)$.
\end{remark}

\begin{markednewpar}\leavevmode\end{markednewpar}\begin{arxiv}
\begin{proof}[Alternative proof avoiding induction]
Suppose $\lambda\in \mcal{C}_s$, i.e. the $s$-core criterion $B^\lambda - s\subseteq B^\lambda$ holds (from Proposition \ref{REF:beta-interpret-hooks}), and fix an integer $j$. Since $\ZZ_{<0}\setminus B^\lambda$ is finite, we have
\begin{align*}
(B^\lambda - s)\cap (j+t\ZZ) &= \{\ldots,u_j-2t,u_j-t,u_j,u_j+\alpha_1 t,\ldots,u_j+\alpha_k t\} \\
B^\lambda\cap (j+t\ZZ) &= \{\ldots,u_j-2t,u_j-t,u_j,u_j+\beta_1 t,\ldots,u_j + \beta_\ell t\},
\end{align*}
for any choice of a sufficiently small element $u_j\equiv j\pmod{t}$ of $(B^\lambda-s)\cap(j+t\ZZ)$ (i.e. so that $B^\lambda - s$ contains $u_j - t, u_j - 2t, u_j - 3t,\ldots$), and $\alpha_1,\ldots,\alpha_k$ and $\beta_1,\ldots,\beta_\ell$ are increasing (possibly empty) sequences of \emph{positive} integers with $\{\alpha_1,\ldots,\alpha_k\} \subseteq \{\beta_1,\ldots,\beta_\ell\}$ (so $0\le k\le \ell$). Thus $\mcal{S}_s(\lambda)-s$, i.e. $B^\lambda \setminus (B^\lambda - s)$ by Proposition \ref{REF:s-sets}, contains $\ell - k$ residues congruent to $j\pmod{t}$.

But $B^{\lambda^t}$ is the \emph{$t$-push} of $B^\lambda$ by Proposition \ref{REF:s-core-operation-is-well-defined}, so the previous computations translate to
\begin{align*}
(B^{\lambda^t} - s)\cap (j+t\ZZ) &= \{\ldots,u_j-2t,u_j-t,u_j,u_j+t,\ldots,u_j+kt\} \\
B^{\lambda^t}\cap (j+t\ZZ) &= \{\ldots,u_j-2t,u_j-t,u_j,u_j+t,\ldots,u_j + \ell t\}.
\end{align*}
Varying over all $j$ shows that $B^{\lambda^t}-s \subseteq B^{\lambda^t}$ (as $k\le \ell$ always), so the $t$-core $\lambda^t$ is indeed still an $s$-core, and furthermore, $\mcal{S}_s(\lambda^t)-s = B^{\lambda^t} \setminus (B^{\lambda^t} - s)$ (valid for the $s$-core $\lambda^t$) contains $\ell - k$ residues congruent to $j\pmod{t}$.

Finally, varying over all $j$ establishes $\mcal{S}_s(\lambda) - s \equiv \mcal{S}_s(\lambda^t) - s\pmod{t}$.
\end{proof}
\end{arxiv}

We isolate our main new observation as the following proposition, which first gives a cleaner proof of a key result from \cite{F1} (see Proposition \ref{PROP:faithful-invariant}), and later features in our stabilizer computations, namely Propositions \ref{PROP:stab-computation} (general case) and \ref{PROP:stab-computation-self-conjugate} (self-conjugate analog).

\begin{proposition}
\label{PROP:s-set-t-set-interaction}

Fix \emph{any} $s,t\ge1$ and $\lambda\in \mcal{C}_s$. Then \mathd{\card{[\mcal{S}_s(\lambda)-s]\cap (j+t\ZZ)} = \frac{1}{t}(a_{t,j}(\lambda^t) - [a_{t,j+s}(\lambda^t) - s]).}
\end{proposition}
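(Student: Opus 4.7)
The plan is to reuse (and extend) the residue-class analysis already performed in the ``alternative proof avoiding induction'' of Proposition \ref{PROP:extending-Olsson-theorem}. That argument shows exactly how the sets $B^\lambda \cap (j+t\ZZ)$ and $(B^\lambda - s) \cap (j+t\ZZ)$ decompose, and in the process already counts the elements of $[\mcal{S}_s(\lambda)-s]\cap(j+t\ZZ)$; all that remains is to read off the $a$-coordinates $a_{t,j}(\lambda^t)$ and $a_{t,j+s}(\lambda^t)$ from the same picture.

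Concretely, fix $\lambda\in\mcal{C}_s$ and $j\in\ZZ$. Choose a sufficiently small $u_j \equiv j\pmod{t}$ with $u_j\in (B^\lambda-s)\cap(j+t\ZZ)$, and write
\[
(B^\lambda - s)\cap (j+t\ZZ) = \{\ldots,u_j-t,\,u_j,\,u_j+\alpha_1 t,\ldots,u_j+\alpha_k t\},
\]
\[
B^\lambda\cap (j+t\ZZ) = \{\ldots,u_j-t,\,u_j,\,u_j+\beta_1 t,\ldots,u_j+\beta_\ell t\},
\]
with $0<\alpha_1<\cdots<\alpha_k$ and $0<\beta_1<\cdots<\beta_\ell$ and $\{\alpha_i\}\subseteq\{\beta_i\}$ (using $B^\lambda-s\subseteq B^\lambda$). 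By definition of $\mcal{S}_s(\lambda) = (B^\lambda+s)\setminus B^\lambda$, we have $\mcal{S}_s(\lambda)-s = B^\lambda\setminus (B^\lambda-s)$, so
\[
\card{[\mcal{S}_s(\lambda)-s]\cap(j+t\ZZ)} = \ell-k.
\]

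Now apply the $t$-push description of $B^{\lambda^t}$ from Proposition \ref{REF:s-core-operation-is-well-defined}: the cofinite ``tail'' in each residue class is compressed to a downward ray, giving
\[
B^{\lambda^t}\cap(j+t\ZZ) = \{\ldots,u_j-t,\,u_j,\,u_j+t,\ldots,u_j+\ell t\},
\]
\[
(B^{\lambda^t}-s)\cap(j+t\ZZ) = \{\ldots,u_j-t,\,u_j,\,u_j+t,\ldots,u_j+k t\}.
\]
The first line yields $a_{t,j}(\lambda^t) = t + \max[B^{\lambda^t}\cap(j+t\ZZ)] = u_j + (\ell+1)t$. For the second, translating by $+s$ turns $(B^{\lambda^t}-s)\cap(j+t\ZZ)$ into $B^{\lambda^t}\cap((j+s)+t\ZZ)$, so its maximum is $s+u_j+kt$, giving $a_{t,j+s}(\lambda^t) = s+u_j+(k+1)t$.

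Subtracting,
\[
a_{t,j}(\lambda^t) - \bigl[a_{t,j+s}(\lambda^t)-s\bigr]
= \bigl(u_j+(\ell+1)t\bigr) - \bigl(u_j+(k+1)t\bigr) = (\ell-k)t,
\]
and dividing by $t$ yields the claimed identity. No real obstacle arises: the crux is simply recognizing that the integers $k$ and $\ell$ attached to the residue class $j$ simultaneously record the multiplicity of $j\pmod t$ in $\mcal{S}_s(\lambda)-s$ \emph{and} the maxima (hence the $a$-coordinates) of the $t$-pushed beta-set in the two residue classes $j$ and $j+s$. The only minor bookkeeping point is to verify the $+s$ translation correctly converts the $j$-residue of $B^{\lambda^t}-s$ into the $(j+s)$-residue of $B^{\lambda^t}$, which is immediate from the definition.
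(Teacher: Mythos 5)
Your proof is correct and is essentially the paper's own argument: the paper proves Proposition \ref{PROP:s-set-t-set-interaction} by invoking Proposition \ref{PROP:extending-Olsson-theorem} to reduce to $\lambda\in\mcal{C}_s\cap\mcal{C}_t$ and then computing $[\mcal{S}_s(\lambda)-s]\cap(j+t\ZZ)$ as the arithmetic progression between the ray-tops $a_{t,j+s}-s$ and $a_{t,j}-t$, which is exactly your difference-of-maxima calculation. The only difference is organizational---you inline the $\alpha/\beta$ residue-class decomposition and $t$-push bookkeeping (the content of the paper's ``alternative proof avoiding induction'' of Proposition \ref{PROP:extending-Olsson-theorem}) rather than citing that proposition's congruence $\mcal{S}_s(\lambda)\equiv\mcal{S}_s(\lambda^t)\pmod{t}$ as a black box, and both versions rest on the same machinery and the same final computation.
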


\begin{proof}
Proposition \ref{PROP:extending-Olsson-theorem} says $\lambda^t\in \mcal{C}_s$ and $\mcal{S}_s(\lambda) \equiv \mcal{S}_s(\lambda^t)\pmod{t}$. Thus it suffices to prove the result with $\lambda$ replaced by its $t$-core $\lambda^t$.

In other words, we may \Wlog{} assume $\lambda\in \mcal{C}_s\cap\mcal{C}_t$. Recall the $t$-core criterion $B^\lambda - t\subseteq B^\lambda$ from Proposition \ref{REF:beta-interpret-hooks}. Then by definition of $a_{t,j}\equiv j\pmod{t}$ and $a_{t,j+s}\equiv j+s\pmod{t}$, we have
\begin{align*}
B^\lambda\cap (j+t\ZZ) &= \{\ldots,a_{t,j}-2t,a_{t,j}-t\} \\
(B^\lambda - s)\cap (j+t\ZZ) &= \{\ldots,[a_{t,j+s}-s]-2t,[a_{t,j+s}-s]-t\}.
\end{align*}
However, Proposition \ref{REF:s-sets} gives $\mcal{S}_s(\lambda)-s = B^\lambda \setminus (B^\lambda - s)$ for the $s$-core $\lambda$ (with the criterion $B^\lambda - s\subseteq B^\lambda$ implicit), so $[\mcal{S}_s(\lambda)-s] \cap (j+t\ZZ)$ is the difference-$t$ arithmetic progression $\{a_{t,j+s}-s,\ldots,a_{t,j}-t\}$ of nonnegative length $\frac1t (a_{t,j} - [a_{t,j+s} - s])$.
\end{proof}

\begin{remark}
\label{RMK:key-quantities-s-set-t-set-interaction}

Compare with both the statement \emph{and} proof of Lemma \ref{LEM:c,a-coordinate-parameterization-s,t-cores},
which can be rephrased in terms of the quantities $\omega \colonequals \frac1t (a_{t,j} - [a_{t,j+s} - s])$.
When $s,t$ are coprime, these re-index in Corollary \ref{COR:s-set-and-stabilizer-data-in-z-coordinates}
to form Johnson's \emph{$z$-coordinates} for $(s,t)$-cores $\lambda$ (as vaguely mentioned in the introduction).

Furthermore, for \emph{arbitrary} $t$-cores $\lambda$, the quantity $\omega$ is meaningful not only when $\omega\ge0$ (in which case $\omega = \card{(B^\lambda\setminus(B^\lambda-s))\cap (j+t\ZZ)}$), but also when $\omega\le0$ (in which case $-\omega\ge0$ counts the size of $((B^\lambda - s)\setminus B^\lambda)\cap (j+t\ZZ)$, i.e. the number of rim $s$-hooks coming---via Proposition \ref{REF:beta-interpret-hooks}---from beta-values $x+s\in B^\lambda\cap(j+s+t\ZZ)$ with $(x+s) - s\notin B^\lambda$). The latter observation is essentially due to Paul Johnson (via private correspondence), and gives combinatorial significance to our modest extension of his $z$-coordinates (see the second halves of Propositions \ref{PROP:change-to-z-coordinates} and \ref{PROP:change-to-z-coordinates-self-conjugate}).

\end{remark}

Proposition \ref{PROP:s-set-t-set-interaction} cleanly proves a result of Fayers ``crucial'' for ``comparing the $t$-cores of different $s$-cores'' \cite{F1}.

\begin{proposition}[Extension of {\cite[Proposition 4.1]{F1}}]
\label{PROP:faithful-invariant}
Fix coprime $s,t\ge1$. Then the $t$-core $\lambda^t$ of an $s$-core $\lambda$ is uniquely determined by the multiset of modulo $t$ residues $\mcal{S}_s(\lambda)\pmod{t}$. Combined with Proposition \ref{PROP:extending-Olsson-theorem}, we conclude that $\lambda,\mu\in \mcal{C}_s$ have the same $t$-core if and only if the multisets of modulo $t$ residues $\mcal{S}_s(\lambda),\mcal{S}_s(\mu)\pmod{t}$ are congruent.
\end{proposition}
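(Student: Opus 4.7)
The plan is to exploit Proposition~\ref{PROP:s-set-t-set-interaction} to recover the full $t$-set $\mcal{S}_t(\lambda^t) = \{a_{t,j}(\lambda^t)\}_{j\in\ZZ/t\ZZ}$ of $\lambda^t$ purely from the data of $\mcal{S}_s(\lambda)\pmod{t}$.

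First I would note that knowing the multiset $\mcal{S}_s(\lambda)\pmod{t}$ is equivalent to knowing the shifted multiset $\mcal{S}_s(\lambda)-s\pmod{t}$, and hence knowing $\card{[\mcal{S}_s(\lambda)-s]\cap(j+t\ZZ)}$ for every residue $j\in\ZZ/t\ZZ$. By Proposition~\ref{PROP:s-set-t-set-interaction}, these counts equal $\frac{1}{t}(a_{t,j}(\lambda^t)-[a_{t,j+s}(\lambda^t)-s])$, so we recover the consecutive differences
\[
\Delta_j \colonequals a_{t,j}(\lambda^t) - a_{t,j+s}(\lambda^t) \qquad (j\in\ZZ/t\ZZ).
\]

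Next, because $\gcd(s,t)=1$, the translation $j\mapsto j+s$ acts as a single $t$-cycle on $\ZZ/t\ZZ$. Iterating telescopes the $\Delta_j$'s into the relative values $a_{t,j+ks}(\lambda^t) - a_{t,j}(\lambda^t)$ for all $k$, determining each $a_{t,j}(\lambda^t)$ up to a single global additive constant. This constant is then pinned down by the normalization $\sum_{j\in\ZZ/t\ZZ} a_{t,j}(\lambda^t) = \binom{t}{2}$ from Proposition~\ref{REF:s-sets}. Thus the full $a_t$-coordinates of $\lambda^t$ are recovered, and by Proposition~\ref{REF:s-sets} these uniquely determine $\lambda^t$.

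The ``only if'' direction of the second sentence is then Proposition~\ref{PROP:extending-Olsson-theorem}, which gives $\mcal{S}_s(\lambda)\equiv \mcal{S}_s(\lambda^t)\pmod{t}$, so if $\lambda^t=\mu^t$ then $\mcal{S}_s(\lambda)\equiv\mcal{S}_s(\mu)\pmod{t}$. Conversely, if $\mcal{S}_s(\lambda)\equiv\mcal{S}_s(\mu)\pmod{t}$, then the reconstruction above applied to both sides yields $\lambda^t=\mu^t$.

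There is no real obstacle here; the coprimality $\gcd(s,t)=1$ is used exactly once (and essentially), to guarantee that the cyclic shift $j\mapsto j+s$ has full orbit on $\ZZ/t\ZZ$ so that the telescoping recovers \emph{all} coordinates of $\lambda^t$, not just those lying in a proper union of cosets. If $\gcd(s,t)>1$, one would only recover the $a_{t,j}(\lambda^t)$ up to an additive constant on each coset of $s\ZZ/t\ZZ$, and the conclusion would genuinely fail.
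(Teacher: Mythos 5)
Your proposal is correct and is essentially the paper's own proof: both use Proposition~\ref{PROP:s-set-t-set-interaction} to read off the differences $a_{t,j}(\lambda^t)-a_{t,j+s}(\lambda^t)$ from the counts $\card{[\mcal{S}_s(\lambda)-s]\cap(j+t\ZZ)}$, use coprimality to determine $\mcal{S}_t(\lambda^t)$ up to translation, and pin the translate with the sum condition $\sum_{j\in\ZZ/t\ZZ}a_{t,j}(\lambda^t)=\binom{t}{2}$, with Proposition~\ref{PROP:extending-Olsson-theorem} supplying the ``only if'' direction. Your telescoping argument merely makes explicit the step the paper summarizes as ``since $s,t$ are coprime, \dots determines $\mcal{S}_t(\lambda^t)$ up to translation,'' and your closing remark about $\gcd(s,t)>1$ correctly identifies where coprimality is essential.
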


\begin{proof}

Fix $\lambda\in \mcal{C}_s$, so Proposition \ref{PROP:s-set-t-set-interaction} gives $\card{\mcal{S}_s(\lambda)\cap (j+s+t\ZZ)} = \frac1t (a_{t,j}(\lambda^t) - [a_{t,j+s}(\lambda^t) - s])$. Since $s,t$ are coprime, it follows that $\mcal{S}_s(\lambda)\pmod{t}$ determines $\mcal{S}_t(\lambda^t) = \{a_{t,j}(\lambda^t)\}$ up to translation. The sum condition $\sum_{j\in\ZZ/t\ZZ} a_{t,j}(\lambda^t) = \binom{t}{2}$ (from Proposition \ref{REF:s-sets}) then singles out a unique translate equal to $\mcal{S}_t(\lambda^t)$, which corresponds under Proposition \ref{REF:s-sets} to a unique $t$-core $\lambda^t$.
\end{proof}

\subsection{Level \texorpdfstring{$t$}{t} action and statement of Fayers' general conjecture}

Proposition \ref{PROP:faithful-invariant} motivates the following group action on $\mcal{C}_s$, for which Corollary \ref{COR:level-t-group-action-orbits} will hold almost by definition.

\begin{definition}[c.f. Fayers {\cite[Section 3.2]{F1}; \cite[Section 3.1]{Fayers}}]
\label{DEF:level-t-group-action}
Fix coprime $s,t\ge1$. Let $G_{s,t}$ be the set of permutations $f\colon\ZZ\to \ZZ$ such that
\begin{itemize}
\item $f$ is \emph{$s$-periodic}, i.e. $f(m+s) = f(m)+s$ for all $m$\begin{arxiv} (or equivalently, $f(m)-m = f(n)-n$ whenever $m+s\ZZ = n+s\ZZ$)\end{arxiv};

\item $f$ satisfies the \emph{sum-invariance condition} $\sum_{i=0}^{s-1} f(i) = \binom{s}{2}$\begin{arxiv} (or equivalently by $s$-periodicity, that $f$ preserves the sum of any set of representatives of the $s$ residue classes modulo $s$)\end{arxiv};

\item $f$ preserves residues modulo $t$, i.e. $f(m) \equiv m\pmod{t}$ for all $m$.
\end{itemize}

It is easy to check that $G_{s,t}$ has a group structure, and that it acts in the obvious way on the set of $s$-sets $\mcal{S}_s(\lambda)$ (of $s$-cores $\lambda$), or equivalently on beta-sets $B^\lambda$ (of $s$-cores $\lambda$). This induces an action on $\mcal{C}_s$, via Proposition \ref{REF:s-sets}.
\end{definition}

\begin{remark}[Different but equivalent definitions]
\label{RMK:difference-of-group-definitions}

We have not given Fayers' actual definition of the `level $t$ action of the $s$-affine symmetric group' (based on the $t=1$ case from \cite{Lascoux-expos}), but rather one equivalent by \cite[Proposition 3.5]{Fayers}, and easier to work with for our purposes.
\end{remark}

\begin{remark}[Natural action?]

It is not hard to show that a permutation $f\colon\ZZ\to\ZZ$ preserves the set of \emph{beta-sets} of $s$-cores if and only if it satisfies the explicit $s$-periodicity and sum-invariance conditions.\begin{arxiv} Preserving the set of $s$-sets alone is barely insufficient; for instance one may have anti-$s$-periodicity, i.e. $f(m+s) = f(m)-s$.\end{arxiv}
\end{remark}

In practice one often encounters group elements by their restrictions to $s$-sets.

\begin{proposition}[{\cite[Corollary 3.6]{Fayers}}]
\label{PROP:extending-bijections-of-s-sets-to-group-actions}

Let $\lambda,\mu$ be $s$-cores.
Suppose $\phi$ a set bijection $\phi\colon \mcal{S}_s(\lambda)\to \mcal{S}_s(\mu)$ that preserves residue classes modulo $t$.
Then $\phi$ uniquely extends to an element $f\in G_{s,t}$ (under Definition \ref{DEF:level-t-group-action}).
\end{proposition}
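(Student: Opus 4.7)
The plan is to use the fact, recalled in Proposition \ref{REF:s-sets}, that an $s$-set $\mcal{S}_s(\lambda) = \{a_{s,0}(\lambda),\ldots,a_{s,s-1}(\lambda)\}$ contains exactly one representative of each residue class modulo $s$. Consequently, the bijection $\phi\colon \mcal{S}_s(\lambda)\to \mcal{S}_s(\mu)$ induces a permutation $\sigma$ of $\ZZ/s\ZZ$ characterized by $\phi(a_{s,i}(\lambda)) = a_{s,\sigma(i)}(\mu)$.

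First I would address uniqueness. Any element $f\in G_{s,t}$ is $s$-periodic, so $f$ is determined by its values on any set of representatives of $\ZZ/s\ZZ$; in particular, $f$ is determined by its restriction to $\mcal{S}_s(\lambda)$. This forces at most one extension of $\phi$. Then I would \emph{define} the candidate extension $f\colon\ZZ\to\ZZ$ by
\[
f(a_{s,i}(\lambda) + ks) \colonequals a_{s,\sigma(i)}(\mu) + ks \qquad (i\in\{0,\ldots,s-1\},\; k\in\ZZ),
\]
and verify the three conditions from Definition \ref{DEF:level-t-group-action}. That $f$ is an honest permutation of $\ZZ$ follows because $a_{s,i}(\lambda)\equiv i\pmod{s}$ and $a_{s,\sigma(i)}(\mu)\equiv \sigma(i)\pmod{s}$, so $f$ sends the residue class $i+s\ZZ$ bijectively onto $\sigma(i)+s\ZZ$ and $\sigma$ is a permutation. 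The $s$-periodicity $f(m+s) = f(m)+s$ is built into the formula. Preservation of residues modulo $t$ reduces, on the class $i+s\ZZ$, to the single congruence $a_{s,\sigma(i)}(\mu)\equiv a_{s,i}(\lambda)\pmod{t}$, which is precisely the hypothesis that $\phi$ preserves residues modulo $t$.

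The one nontrivial check is the sum-invariance condition $\sum_{i=0}^{s-1} f(i) = \binom{s}{2}$. By $s$-periodicity, the shift $r_{[i]} \colonequals f(m)-m$ depends only on the class of $m$ modulo $s$, so (using the literal representatives $0,1,\ldots,s-1$)
\[
\sum_{i=0}^{s-1} f(i) \;=\; \binom{s}{2} + \sum_{i=0}^{s-1} r_{[i]}.
\]
Evaluating $r_{[i]}$ on the representative $a_{s,i}(\lambda)$ gives $r_{[i]} = a_{s,\sigma(i)}(\mu) - a_{s,i}(\lambda)$, so
\[
\sum_{i=0}^{s-1} r_{[i]} \;=\; \sum_{j=0}^{s-1} a_{s,j}(\mu) \;-\; \sum_{i=0}^{s-1} a_{s,i}(\lambda) \;=\; \binom{s}{2} - \binom{s}{2} \;=\; 0,
\]
where both sums equal $\binom{s}{2}$ by the sum condition on $s$-sets in Proposition \ref{REF:s-sets}. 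This is the step I expect to be the main (mild) obstacle, in that it is the one place where the hypothesis ``$\lambda$ and $\mu$ are both $s$-cores'' (as opposed to arbitrary finite subsets hitting each residue class once) gets used; everything else is bookkeeping with the periodicity and the residue condition.
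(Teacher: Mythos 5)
Your proposal is correct and takes essentially the same route as the paper's proof: uniqueness from $s$-periodicity (since $\mcal{S}_s(\lambda)$ contains one representative of each residue class modulo $s$), followed by defining the extension explicitly and verifying the permutation, mod-$t$, and sum-invariance conditions, with the last reducing to both $s$-sets summing to $\binom{s}{2}$. Your permutation $\sigma$ is merely notational packaging of the paper's formula $f(m) = m + [\phi(a_{s,m}(\lambda)) - a_{s,m}(\lambda)]$, so the two arguments coincide.
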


\begin{markednewpar}\leavevmode\end{markednewpar}\begin{arxiv}
\begin{proof}
The uniqueness is clear: the $s$-periodicity condition uniquely determines $f$ on the modulo $s$ residue classes $a_{s,i}(\lambda)+s\ZZ = i+s\ZZ$, hence on the whole set of integers. Explicitly, we have $f(m) = m + [f(a_{s,m}(\lambda)) - a_{s,m}(\lambda)] = m + [\phi(a_{s,m}(\lambda)) - a_{s,m}(\lambda)]$ for any integer $m$, as $a_{s,m} \equiv m\pmod{s}$ by definition.

It is then easy to check that this (uniquely determined) $f$ is actually a permutation of $\ZZ$ (it permutes the residue classes modulo $s$, because $\phi$ permutes $\mcal{S}_s(\lambda)$) preserving residues modulo $t$ (because $\phi$ preserves residues modulo $t$, we have $f(m) = m + [\phi(a_{s,m}(\lambda)) - a_{s,m}(\lambda)] \equiv m\pmod{t}$) and satisfying the sum condition
\[ \sum_{i=0}^{s-1}f(i) = \sum_{i=0}^{s-1} i + \sum_{i\in\ZZ/s\ZZ} [\phi(a_{s,i}(\lambda))-a_{s,i}(\lambda)] = \binom{s}{2} + \sum_{i\in\ZZ/s\ZZ} [a_{s,i}(\mu) - a_{s,i}(\lambda)] = \binom{s}{2}, \]
hence an element of $G_{s,t}$ (again, as defined in Definition \ref{DEF:level-t-group-action}).
\end{proof}
\end{arxiv}

\begin{corollary}[{\cite[Proposition 4.2 and Corollary 4.5]{F1}}]
\label{COR:level-t-group-action-orbits}

Fix coprime $s,t\ge1$. Then $\lambda,\mu\in \mcal{C}_s$ lie in the same $G_{s,t}$-orbit if and only if $\lambda^t = \mu^t$. In other words, each $G_{s,t}$-orbit of $\mcal{C}_s$ contains a unique $t$-core (hence an $(s,t)$-core), and any $\lambda\in \mcal{C}_s$ lies in $G_{s,t}\lambda^t$.
\end{corollary}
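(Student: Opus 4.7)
The plan is to obtain both directions of the orbit equivalence directly from the two ``crucial'' results already established: Proposition \ref{PROP:faithful-invariant} (faithfulness of the mod-$t$ residue multiset for recovering $\lambda^t$) together with Proposition \ref{PROP:extending-bijections-of-s-sets-to-group-actions} (lifting any residue-preserving bijection on $s$-sets to a group element). Proposition \ref{PROP:extending-Olsson-theorem} will then hand us a canonical $(s,t)$-core representative in each orbit for free.

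For the forward direction, suppose $\mu = f\lambda$ with $f \in G_{s,t}$. Since the action on $\mcal{C}_s$ is induced from the action on beta-sets (equivalently on $s$-sets), we get $\mcal{S}_s(\mu) = f(\mcal{S}_s(\lambda))$; because $f$ preserves residues modulo $t$ by Definition \ref{DEF:level-t-group-action}, the multisets $\mcal{S}_s(\lambda) \pmod{t}$ and $\mcal{S}_s(\mu) \pmod{t}$ agree. Proposition \ref{PROP:faithful-invariant} immediately yields $\lambda^t = \mu^t$. For the reverse direction, assume $\lambda^t = \mu^t$. The other half of Proposition \ref{PROP:faithful-invariant} gives the multiset congruence $\mcal{S}_s(\lambda) \equiv \mcal{S}_s(\mu) \pmod{t}$, which is exactly the data needed to pick a bijection $\phi\colon \mcal{S}_s(\lambda) \to \mcal{S}_s(\mu)$ matching each $a_{s,i}(\lambda)$ to some $a_{s,j}(\mu)$ with $a_{s,i}(\lambda) \equiv a_{s,j}(\mu) \pmod{t}$. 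Proposition \ref{PROP:extending-bijections-of-s-sets-to-group-actions} extends $\phi$ uniquely to $f \in G_{s,t}$; then $s$-periodicity of $f$ propagates the match $\phi(a_{s,i}(\lambda)) = a_{s,j}(\mu)$ down the tails $\{a_{s,i}(\lambda) - ks : k \ge 1\}$ and $\{a_{s,j}(\mu) - ks : k \ge 1\}$, so $f(B^\lambda) = B^\mu$ and hence $f\lambda = \mu$.

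For the unique $(s,t)$-core per orbit claim: existence is Proposition \ref{PROP:extending-Olsson-theorem}, which says $\lambda^t \in \mcal{C}_s \cap \mcal{C}_t$, combined with $(\lambda^t)^t = \lambda^t$ to conclude via the just-proved equivalence that $\lambda$ and $\lambda^t$ share an orbit. Uniqueness is automatic: two $(s,t)$-cores $\nu_1, \nu_2$ in the same orbit satisfy $\nu_1 = \nu_1^t = \nu_2^t = \nu_2$.

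No serious obstacle is anticipated---the genuine content lies upstream in Propositions \ref{PROP:faithful-invariant} and \ref{PROP:extending-bijections-of-s-sets-to-group-actions}. The only point requiring any care is verifying that the lifted $f$ actually sends $B^\lambda$ to $B^\mu$ (rather than merely matching $s$-sets), but this is handled by a one-line $s$-periodicity computation on each residue class modulo $s$, using the explicit formula $f(m) = m + [\phi(a_{s,m}(\lambda)) - a_{s,m}(\lambda)]$ from the proof of Proposition \ref{PROP:extending-bijections-of-s-sets-to-group-actions}.
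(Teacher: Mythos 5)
Your proof is correct and takes essentially the same route as the paper's own direct proof: both directions of the orbit equivalence come from Proposition \ref{PROP:faithful-invariant} combined with Proposition \ref{PROP:extending-bijections-of-s-sets-to-group-actions}, and the unique-$(s,t)$-core rephrasing follows by specializing to $\mu \colonequals \lambda^t$ using Proposition \ref{PROP:extending-Olsson-theorem}. Your closing verification that the lifted $f$ sends $B^\lambda$ to $B^\mu$ (by $s$-periodicity on the tails $\{a_{s,i} - ks : k \ge 1\}$) is a harmless elaboration of what the paper builds into Definition \ref{DEF:level-t-group-action}, where the action on $s$-sets and on beta-sets is declared equivalent.
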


\begin{proof}[Sketch of more direct proof]

Fix $s$-cores $\lambda,\mu$. Proposition \ref{PROP:faithful-invariant} followed by Proposition \ref{PROP:extending-bijections-of-s-sets-to-group-actions} gives equivalence of $\lambda^t = \mu^t$ and $\mu \in G_{s,t}\lambda$. The re-phrasing follows by specializing to $\mu \colonequals \lambda^t$ (where $\lambda^t\in \mcal{C}_s$ follows from Proposition \ref{PROP:extending-Olsson-theorem}).
\end{proof}

\begin{markednewpar}\leavevmode\end{markednewpar}\begin{arxiv}
\begin{proof}[Full direct proof]
Let $\lambda,\mu$ be two $s$-cores. Proposition \ref{PROP:faithful-invariant} states that $\lambda^t = \mu^t$ if and only if we have a congruence of $s$-sets $\mcal{S}_s(\lambda) \equiv \mcal{S}_s(\mu)\pmod{t}$, i.e. there exists a bijection $\phi\colon \mcal{S}_s(\lambda) \to \mcal{S}_s(\mu)$ (of $s$-sets) preserving residues modulo $t$. By Proposition \ref{PROP:extending-bijections-of-s-sets-to-group-actions}, such bijections $\phi$ correspond to group elements $f\in G_{s,t}$ with restriction $f|_{\mcal{S}_s(\lambda)} = \phi$. So $\lambda^t = \mu^t$ if and only if $\mu = f\lambda$ for some $f\in G_{s,t}$, i.e. $\lambda,\mu$ lie in the same $G_{s,t}$-orbit.
\end{proof}
\end{arxiv}

Definition \ref{DEF:level-t-group-action} and Corollary \ref{COR:level-t-group-action-orbits} provide the background and context for Theorem \ref{THM:general-Fayers} (stated in the introduction).

\section{Key inputs for computation}
\label{SEC:computation-inputs}

In this section, we describe all the computational methods and results used to compute the sums in Theorems \ref{THM:general-Armstrong} and \ref{THM:general-Fayers}. First we compute the sizes of the stabilizers appearing in Theorem \ref{THM:general-Fayers}. Section \ref{SEC:z-coordinates-intro} compares Fayers' $a$-coordinates with a modest extension of Johnson's $z$-coordinates. In Section \ref{SEC:z-coordinates-partition-size} we give an explicit formula for the size of a $t$-core, and in Section \ref{SEC:z-coordinate-cyclic-shifts} we explain the standard cyclic shifts used to compute $z$-coordinate sums of cyclic functions, such as the stabilizer and partition sizes.

\subsection{Size of the stabilizer of an \texorpdfstring{$s$}{s}-core}
\label{SEC:stab-computation}

Most of the proof ideas for Theorem \ref{THM:general-Fayers} come from Johnson \cite{Johnson} and Fayers \cite{F1,Fayers}. The key new observation is the following computational simplification of Fayers' formula for the size of the stabilizer of an $s$-core, based on Proposition \ref{PROP:s-set-t-set-interaction}, which will simplify even further once we translate to Johnson's $z$-coordinates (see Corollary \ref{COR:s-set-and-stabilizer-data-in-z-coordinates}).

\begin{proposition}[c.f. {\cite[Proposition 3.7]{Fayers}}]
\label{PROP:stab-computation}

Fix coprime $s,t\ge1$, and $\lambda\in \mcal{C}_s$. Then with $G_{s,t}$ from Definition \ref{DEF:level-t-group-action}, $\card{\stab_{G_{s,t}}(\lambda)}$ equals $\prod_{j\in\ZZ/t\ZZ} [\frac{1}{t}(a_{t,j}(\lambda^t) - [a_{t,j+s}(\lambda^t) - s])]!$.
\end{proposition}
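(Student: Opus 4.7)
The plan is to reduce the stabilizer to something combinatorial about residue classes of $\mcal{S}_s(\lambda)$, then apply Proposition \ref{PROP:s-set-t-set-interaction} to convert the count into $a_t$-coordinates.

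First, I would identify the stabilizer with a set of restricted bijections. By Proposition \ref{PROP:extending-bijections-of-s-sets-to-group-actions}, any $f\in G_{s,t}$ that fixes the $s$-core $\lambda$ is uniquely determined by the bijection $\phi \colonequals f|_{\mcal{S}_s(\lambda)} \colon \mcal{S}_s(\lambda)\to \mcal{S}_s(\lambda)$, which preserves residues modulo $t$. Conversely, every bijection $\phi\colon \mcal{S}_s(\lambda)\to \mcal{S}_s(\lambda)$ that preserves residues modulo $t$ extends uniquely to some $f \in G_{s,t}$, and this $f$ necessarily fixes $\lambda$ (since the action on $\mcal{C}_s$ is induced from the action on $s$-sets via Proposition \ref{REF:s-sets}). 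Hence $\stab_{G_{s,t}}(\lambda)$ is in bijection with the set of permutations of $\mcal{S}_s(\lambda)$ that preserve residue classes modulo $t$.

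Next, I would count these permutations. The $s$-set $\mcal{S}_s(\lambda)$ partitions into its intersections with the residue classes $j+t\ZZ$ for $j\in \ZZ/t\ZZ$, and a permutation preserves residues modulo $t$ exactly when it permutes each piece independently. Thus
\[
\card{\stab_{G_{s,t}}(\lambda)} = \prod_{j\in \ZZ/t\ZZ} \card{\mcal{S}_s(\lambda)\cap (j+t\ZZ)}!.
\]

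Finally, I would rewrite each factor using Proposition \ref{PROP:s-set-t-set-interaction}. That proposition provides
\[
\card{[\mcal{S}_s(\lambda)-s]\cap (j+t\ZZ)} = \frac{1}{t}\bigl(a_{t,j}(\lambda^t) - [a_{t,j+s}(\lambda^t) - s]\bigr).
\]
Since translation by $s$ is a bijection of $\ZZ$ that sends the class $j+t\ZZ$ to $j+s+t\ZZ$, and since $j\mapsto j+s$ is a bijection of $\ZZ/t\ZZ$ (as $\gcd(s,t)=1$), reindexing turns the product of $\card{\mcal{S}_s(\lambda)\cap (j+t\ZZ)}!$ over $j\in \ZZ/t\ZZ$ into the product of $\card{[\mcal{S}_s(\lambda)-s]\cap (j+t\ZZ)}!$ over $j\in \ZZ/t\ZZ$, giving the claimed formula.

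The calculation itself is short; the only subtlety is the first step, where one must carefully check that the $G_{s,t}$-bijection $\phi \leftrightarrow f$ from Proposition \ref{PROP:extending-bijections-of-s-sets-to-group-actions} restricts to a bijection between $\stab_{G_{s,t}}(\lambda)$ and the set of residue-preserving self-bijections of $\mcal{S}_s(\lambda)$ (in particular, that $\phi$ is required to map $\mcal{S}_s(\lambda)$ to itself, not merely to another $s$-set). This is the main conceptual point, and it is precisely where comparing with Fayers' original formula \cite[Proposition 3.7]{Fayers} is cleaner via the identity in Proposition \ref{PROP:s-set-t-set-interaction}.
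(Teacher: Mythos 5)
Your proposal is correct and follows essentially the same route as the paper: both identify $\stab_{G_{s,t}}(\lambda)$ with the residue-preserving permutations of $\mcal{S}_s(\lambda)$ via Proposition \ref{PROP:extending-bijections-of-s-sets-to-group-actions}, count these as $\prod_{j\in\ZZ/t\ZZ}\card{\mcal{S}_s(\lambda)\cap(j+t\ZZ)}!$, and convert to $a_t$-coordinates by the shift reindexing together with Proposition \ref{PROP:s-set-t-set-interaction}. One trivial remark: the reindexing $j\mapsto j+s$ on $\ZZ/t\ZZ$ is a bijection for \emph{any} $s$ (translation is always invertible), so your parenthetical appeal to $\gcd(s,t)=1$ there is superfluous, though harmless.
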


%it is easier to work with functions than other objects, just like it's easier to establish bijections by proving things one way than both ways simultaneously

\begin{proof}
Using Proposition \ref{PROP:extending-bijections-of-s-sets-to-group-actions},
Fayers showed in \cite[Proposition 3.7]{Fayers} that $\stab_{G_{s,t}}(\lambda)$ (under a different but equivalent definition of $G_{s,t}$; see Remark \ref{RMK:difference-of-group-definitions})
has size
\mathd{
\prod_{j\in\ZZ/t\ZZ}\card{\mcal{S}_s(\lambda)\cap (j+t\ZZ)}!
= \prod_{j\in\ZZ/t\ZZ}\card{[\mcal{S}_s(\lambda)-s]\cap (j+t\ZZ)}!
.}
\begin{arxiv}
To prove this, take $f\in G_{s,t}$ (as defined in Definition \ref{DEF:level-t-group-action}); then in particular, $f$ preserves residue classes modulo $t$. By definition, $f$ lies in the stabilizer $\stab(\lambda)$ if and only if $f$ fixes the $s$-set $\mcal{S}_s(\lambda)$ (of the $s$-core $\lambda$), i.e. $f$ restricts to a permutation $\pi$ on the elements of $\mcal{S}_s(\lambda)$ also preserving residues modulo $t$. Observe that

\begin{itemize}
\item Any such permutation $\pi$ uniquely extends to an element $f\in G_{s,t}$. Indeed, this is just Proposition \ref{PROP:extending-bijections-of-s-sets-to-group-actions} applied to the bijection $\pi\colon \mcal{S}_s(\lambda) \mapsto \mcal{S}_s(\lambda)$ (of $s$-sets).

\item Such permutations $\pi$ of $\mcal{S}_s(\lambda)$ correspond to (disjoint) products of permutations of $\mcal{S}_s(\lambda)\cap (j+t\ZZ)$ (on the individual residue classes $j+t\ZZ$).
\end{itemize}
\end{arxiv}

Substituting in Proposition \ref{PROP:s-set-t-set-interaction} gives the result.
\end{proof}

\begin{markednewpar}\leavevmode\end{markednewpar}\begin{arxiv}
\begin{sidermk}
We will only explicitly use this result for $(s,t)$-cores $\lambda\in \mcal{C}_s\cap \mcal{C}_t$, when $s,t$ are coprime, in the proof of Fayers' general conjecture (Theorem \ref{THM:general-Fayers}). More precisely, we will use the $z$-coordinate translation given in Corollary \ref{COR:s-set-and-stabilizer-data-in-z-coordinates}.
\end{sidermk}
\end{arxiv}

\subsection{Johnson's \texorpdfstring{$z$}{z}-coordinates versus Fayers' \texorpdfstring{$t$}{t}-sets}
\label{SEC:z-coordinates-intro}

As reflected by the simple translation Corollary \ref{COR:s-set-and-stabilizer-data-in-z-coordinates}, Proposition \ref{PROP:stab-computation} and Remark \ref{RMK:key-quantities-s-set-t-set-interaction} provide one source of motivation for the following choice of coordinates. We not only review Johnson's parameterization of $(s,t)$-cores, but also extend it to arbitrary $t$-cores, given a parameter $s\ge1$ coprime to $t$.

\begin{proposition}[c.f. {\cite[Lemma 3.5]{Johnson}}: Johnson's $z$-coordinates versus Fayers' $t$-sets]
\label{PROP:change-to-z-coordinates}

Fix coprime $s,t\ge1$. The set $\mcal{C}_s\cap\mcal{C}_t$ of $(s,t)$-cores (viewed as $s$-cores within the set of $t$-cores) is parameterized by either of the sets $A_t(s),\map{TD}_t(s)$, described as follows.
\begin{itemize}
\item By Lemma \ref{LEM:c,a-coordinate-parameterization-s,t-cores}, the set of $t$-sets $\mcal{S}_t(\lambda)$ of $(s,t)$-cores $\lambda$, i.e. the set of points $A_t(s) = \{(a_{t,i})_{i\in\ZZ/t\ZZ}\}$ defined by the inequalities $a_{t,i} \ge a_{t,i+s} - s$, the sum condition $\sum_{i\in\ZZ/t\ZZ} a_{t,i} = \binom{t}{2}$, and congruence conditions $a_{t,i} \equiv i\pmod{t}$.

\item Johnson's \emph{trivial determinant representations} set $\map{TD}_t(s) = \{(z_{t,i})_{i\in\ZZ/t\ZZ}\}$ defined by the inequalities $z_{t,i}\ge0$, the sum condition $\sum_{i\in\ZZ/t\ZZ} z_{t,i} = s$, and congruence conditions $z_{t,i} \equiv 0\pmod{1}$ and $\sum_{i\in\ZZ/t\ZZ} i z_{t,i} \equiv 0\pmod{t}$.
\end{itemize}

An isomorphism (also preserving the ambient linear and simplex structures) is given by the invertible affine change of variables $z_{t,j} \colonequals \frac{1}{t}(a_{t,sj + k} - [a_{t,s(j+1) + k} - s])$, for $j\in\ZZ/t\ZZ$, where $k \colonequals \frac12 (s+1)(t-1) \in \ZZ$. The inverse map can be described by $a_{t,k+\ell s} - \frac{t-1}{2} =  \sum_{j=0}^{t-1} (\frac{t-1}{2} - j) z_{t,j+\ell}$, for $\ell\in\ZZ/t\ZZ$.

%ADDED 7/18/15.

Under the same affine change of variables, the larger set $\mcal{C}_t$ of $t$-cores is parameterized by either of the following sets.
\begin{itemize}
\item By Proposition \ref{REF:s-sets}, the set of $t$-sets $\mcal{S}_t(\lambda)$ of $t$-cores $\lambda$, i.e. the set of points $\mcal{C}_t = \{(a_{t,i})_{i\in\ZZ/t\ZZ}\}$ defined by the sum condition $\sum_{i\in\ZZ/t\ZZ} a_{t,i} = \binom{t}{2}$, and congruence conditions $a_{t,i} \equiv i\pmod{t}$.

\item In $z$-coordinates, the set of points $\mcal{C}_t = \{(z_{t,i})_{i\in\ZZ/t\ZZ}\}$ defined by the sum condition $\sum_{i\in\ZZ/t\ZZ} z_{t,i} = s$, and congruence conditions $z_{t,i} \equiv 0\pmod{1}$ and $\sum_{i\in\ZZ/t\ZZ} i z_{t,i} \equiv 0\pmod{t}$.
\end{itemize}
\end{proposition}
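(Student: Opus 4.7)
The plan is to take the $a$-coordinate parameterizations as given (Proposition \ref{REF:s-sets} handles general $t$-cores and Lemma \ref{LEM:c,a-coordinate-parameterization-s,t-cores} handles $(s,t)$-cores), so the content reduces to showing that the prescribed affine change of variables is a well-defined bijection matching the two sets of constraints. First I would verify that $k = \frac{(s+1)(t-1)}{2} \in \ZZ$: since $\gcd(s,t)=1$ at least one of $s,t$ is odd, so at least one of $s+1, t-1$ is even. Then for the forward map $z_{t,j} \defeq \frac{1}{t}(a_{t,sj+k} - a_{t,s(j+1)+k} + s)$, the integrality of $z_{t,j}$ is immediate from $a_{t,sj+k} - a_{t,s(j+1)+k} \equiv sj - s(j+1) = -s \pmod{t}$, which uses only the congruences on the $a$'s.

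Next I would check the two linear/congruence conditions on the $z$-side. Writing $b_\ell \defeq a_{t,k+\ell s}$ (a valid reindexing because $s$ is coprime to $t$), the forward map becomes $z_{t,j} = \frac{1}{t}(b_j - b_{j+1} + s)$. Summing telescopes cyclically to give $\sum_j z_{t,j} = s$. An Abel-summation (summation by parts) on $\sum_j j(b_j - b_{j+1})$ then yields $\sum_j j z_{t,j} = \frac{1}{t}\sum_\ell b_\ell - b_0 + \frac{s(t-1)}{2}= \frac{t-1}{2} + \frac{s(t-1)}{2} - a_{t,k} = k - a_{t,k} \equiv 0 \pmod{t}$, precisely the second congruence on the $z$-side — this is exactly the identity that motivates the choice of $k$. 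Finally the inequalities pass through transparently: substituting $i = sj+k$ in $a_{t,i} \ge a_{t,i+s} - s$ gives $z_{t,j} \ge 0$.

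To close the loop I would verify the inverse. From $b_j - b_{j+1} = tz_{t,j} - s$ one gets $b_\ell - b_{\ell+m} = t\sum_{j=0}^{m-1} z_{t,j+\ell} - ms$; averaging over $m=0,\dots,t-1$ and using $\sum_m b_m = \binom{t}{2}$ produces $b_\ell - \frac{t-1}{2} = \sum_{j=0}^{t-1}(\tfrac{t-1}{2}-j)z_{t,j+\ell}$, which is the stated inverse formula. Conversely, starting from integer $z$'s satisfying $\sum z_{t,j} = s$ and $\sum j z_{t,j} \equiv 0 \pmod t$, this formula defines integers $a_{t,k+\ell s}$ whose residues and total sum can be checked directly: reduction mod $t$ (plus $\sum j z_{t,j}\equiv 0$) gives $a_{t,k+\ell s} \equiv k + \ell s \pmod t$, and summing over $\ell$ collapses to $\binom{t}{2}$ after the cross-terms cancel. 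This bijection is affine by construction, so it automatically preserves the linear/simplex structure.

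The statement for general $t$-cores (not just $(s,t)$-cores) requires no further work: it is exactly the same affine bijection with the $z_{t,j}\ge 0$ inequalities dropped on both sides. The main obstacle, and the only place one can easily slip, is the bookkeeping of the cyclic indexing when computing $\sum_j j z_{t,j}$ modulo $t$ and when summing the inverse formula over $\ell$: one has to track how $j + \ell$ wraps around $t$, but every wrap contributes a multiple of $t$ and so drops out of the modular computation — which is the whole point of the construction and of the choice $k=\frac{(s+1)(t-1)}{2}$.
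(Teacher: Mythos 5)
Your proposal is correct and takes essentially the same route as the paper's proof: both reduce to the $a$-coordinate parameterizations, verify the forward map lands in the plane $\sum_j z_{t,j}=s$, derive the stated inverse formula by a telescoping/averaging computation, and use the key identity $\sum_j j z_{t,j+\ell} \equiv k - a_{t,k+\ell s} \pmod{t}$ (with the cyclic shift $\sum_j jz_{t,j+\ell}\equiv \sum_j jz_{t,j}-\ell s$) to match the congruence conditions in both directions, with the general $t$-core case obtained by dropping the inequalities. Your Abel-summation bookkeeping and the residue/sum checks for the converse direction agree with the paper's items (1)--(4), so no gap remains.
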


\begin{markednewpar}\leavevmode\end{markednewpar}\begin{arxiv}
\begin{sidermk}
\label{RMK:SIDE:z-coordinates-t-core-clarification-of-s}

In the $z$-coordinate parameterization of the larger set $\mcal{C}_t$ (as opposed to $\mcal{C}_s\cap \mcal{C}_t$), one may think of $s$ as a ``purely algebraic parameter'' coprime to $t$, with applications to (for instance) the ``symmetric proof'' of Theorem \ref{THM:Amdeberhan--Leven--conjecture} given in Section \ref{SEC:Amdeberhan--Leven--conjecture}.
\end{sidermk}
\end{arxiv}

\begin{remark}
Although it will only matter for the asymmetric Theorem \ref{THM:general-Fayers}, not the symmetric Theorem \ref{THM:general-Armstrong}, we use, in Johnson's notation, the parameterization $\map{TD}_t(s)$, instead of $\map{TD}_s(t)$ as Johnson might for Armstrong's conjectures \cite{Johnson}.
\end{remark}

Before proving the result, we first translate $s$-set and stabilizer data to $z$-coordinates.

\begin{corollary}
\label{COR:s-set-and-stabilizer-data-in-z-coordinates}

Fix coprime $s,t\ge1$. Fix $\lambda\in \mcal{C}_s$. The $z$-coordinates parameterizing the $(s,t)$-core $\lambda^t$ are defined by the affine change of variables
\[ \underbrace{z_{t,j}(\lambda^t) \colonequals \tfrac{1}{t}(a_{t,sj+k}(\lambda^t) - [a_{t,s(j+1)+k}(\lambda^t) - s])}_\text{as defined in Proposition \ref{PROP:change-to-z-coordinates}} \underbrace{= \card{[\mcal{S}_s(\lambda) - s] \cap (sj+k+t\ZZ)}}_\text{by Proposition \ref{PROP:s-set-t-set-interaction}}, \]
where $k$ is the constant $\frac{1}{2}(s+1)(t-1)$. Furthermore, Proposition \ref{PROP:stab-computation} translates to
\begin{align*}
\card{\stab_{G_{s,t}}(\lambda)}
&= \prod_{j\in\ZZ/t\ZZ} \left( \frac{a_{t,j}(\lambda^t) - [a_{t,j+s}(\lambda^t) - s]}{t} \right)! \\
&\underbrace{= \prod_{j\in\ZZ/t\ZZ} \left( \frac{a_{t,sj+k}(\lambda^t) - [a_{t,s(j+1)+k}(\lambda^t) - s]}{t} \right)!}_\text{since $j+t\ZZ\mapsto sj+k+t\ZZ$ is bijective}
= \prod_{j\in\ZZ/t\ZZ} z_{t,j}(\lambda^t)!.
\end{align*}
\end{corollary}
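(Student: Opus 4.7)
The corollary is essentially a bookkeeping statement that combines three earlier results, so the plan is simply to unpack the definitions and invoke them in the right order.

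First I would justify the display chain for $z_{t,j}(\lambda^t)$. The first equality is literally the affine change of variables introduced in Proposition~\ref{PROP:change-to-z-coordinates}, applied to the $(s,t)$-core $\lambda^t$ (which is indeed an $(s,t)$-core by Proposition~\ref{PROP:extending-Olsson-theorem}, so that its $t$-set $\mathcal{S}_t(\lambda^t)$ and hence the $a_{t,\bullet}(\lambda^t)$ are well-defined). For the second equality, I would apply Proposition~\ref{PROP:s-set-t-set-interaction}, with the residue class index ``$j$'' in that proposition replaced by $sj+k$. Since Proposition~\ref{PROP:extending-Olsson-theorem} tells us $\mathcal{S}_s(\lambda) \equiv \mathcal{S}_s(\lambda^t) \pmod{t}$, one can equivalently compute the cardinality of the intersection using either $\lambda$ or $\lambda^t$, which matches the way the corollary is stated (using $\mathcal{S}_s(\lambda)$ on one side and $a_{t,\bullet}(\lambda^t)$ on the other).

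Next I would handle the stabilizer identity. The starting point is Proposition~\ref{PROP:stab-computation}, which gives
\[
\card{\stab_{G_{s,t}}(\lambda)} = \prod_{j\in\ZZ/t\ZZ} \left(\tfrac{1}{t}(a_{t,j}(\lambda^t) - [a_{t,j+s}(\lambda^t) - s])\right)!.
\]
The key observation is that the map $j\mapsto sj+k$ is a permutation of $\ZZ/t\ZZ$, because $\gcd(s,t)=1$. (I would also briefly remark that $k = \tfrac12(s+1)(t-1)$ is an integer: $s,t$ coprime forces at least one of $s+1, t-1$ to be even.) Reindexing $j \leadsto sj+k$ in the product turns the $j$-th factor into $(\tfrac{1}{t}(a_{t,sj+k}(\lambda^t) - [a_{t,s(j+1)+k}(\lambda^t) - s]))!$, which by the definition of $z_{t,j}(\lambda^t)$ already established is exactly $z_{t,j}(\lambda^t)!$.

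There is really no hard step here: the only thing to verify carefully is that the reindexing $j\mapsto sj+k$ is a bijection of $\ZZ/t\ZZ$ and that the shift by $k$ inside the arguments of $a_{t,\bullet}$ matches between the two displayed forms. Both are immediate from coprimality of $s$ and $t$, so the entire proof should fit into a few lines of prose referencing the three earlier propositions.
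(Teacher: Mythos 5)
Your proposal is correct and takes essentially the same route as the paper, which gives no separate proof but justifies the corollary inline: the first equality is the definition from Proposition \ref{PROP:change-to-z-coordinates}, the second is Proposition \ref{PROP:s-set-t-set-interaction} applied at index $sj+k$, and the stabilizer formula is Proposition \ref{PROP:stab-computation} reindexed along the bijection $j\mapsto sj+k$ of $\ZZ/t\ZZ$ (valid since $\gcd(s,t)=1$). Your additional checks---that $\lambda^t\in\mcal{C}_s\cap\mcal{C}_t$ by Proposition \ref{PROP:extending-Olsson-theorem} so its $a$- and $z$-coordinates make sense, and that $k=\tfrac12(s+1)(t-1)\in\ZZ$ by coprimality---are correct and only add precision.
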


\begin{markednewpar}\leavevmode\end{markednewpar}\begin{arxiv}
\begin{sidermk}
\label{RMK:SIDE:z-coordinates-significance}

Propositions \ref{PROP:stab-computation} and \ref{PROP:s-set-t-set-interaction} are not the only way to motivate the $z$-coordinates---Johnson considers them in \cite{Johnson} for aesthetic and practical reasons (i.e. repeatedly ``simplifying'' the parameterization of $(s,t)$-cores)---but the propositions do give the coordinates additional significance.
\end{sidermk}
\end{arxiv}

\begin{proof}[Proof of $a$-versus-$z$ isomorphism in Proposition \ref{PROP:change-to-z-coordinates}]

Let $\phi$ be the affine map sending a point $(x_i)_{i\in\ZZ/t\ZZ}$ on the $(t-1)$-dimensional plane $\sum_{i\in\ZZ/t\ZZ} X_i = \binom{t}{2}$ to $(y_j)_{j\in\ZZ/t\ZZ}$ with $y_j \colonequals \frac1t(x_{sj + k} - [x_{s(j+1) + k} - s])$. Then $\phi$ maps into the $(t-1)$-dimensional plane $\sum_{j\in\ZZ/t\ZZ} Y_j = s$. Since $s,t$ are coprime, it is easy to check that $\phi\colon \{\sum_{i\in\ZZ/t\ZZ} X_i = \binom{t}{2}\} \to \{\sum_{j\in\ZZ/t\ZZ} Y_j = s\}$ is injective, hence bijective.

Perhaps the most natural description of the inverse $\phi^{-1}$ is given (for $\ell\in\ZZ/t\ZZ$, noting that $\ell\mapsto s\ell+k$ is surjective modulo $t$) by evaluating $\sum_{j=0}^{t-1} (\frac{t-1}{2} - j) y_{j+\ell}\cdot t$, i.e. the sum $\sum_{j=0}^{t-1} (\frac{t-1}{2} - j)s + (\frac{t-1}{2} - j) x_{sj+s\ell+k} - (\frac{t-1}{2} - j) x_{s(j+1)+s\ell+k}$, which telescopes to
\[ \frac{t-1}{2}x_{s\cdot 0+s\ell+k} + \frac{t-1}{2}x_{s\cdot t+s\ell+k} - \sum_{j=0}^{t-2} x_{s(j+1)+s\ell+k} = tx_{s\ell+k} - \binom{t}{2}. \]
This yields the equality $\sum_{j=0}^{t-1} (\frac{t-1}{2} - j) y_{j+\ell} = x_{s\ell+k} - \frac{t-1}{2}$.

Having analyzed the ambient affine space, we now wish to show that $\phi$ restricts to a set bijection $A_t(s)\to \map{TD}_t(s)$ for $\mcal{C}_s\cap\mcal{C}_t$, as well as the analogous bijection for $\mcal{C}_t$. Suppose $(x_i)\in \{\sum_{i\in\ZZ/t\ZZ} X_i = \binom{t}{2}\}$ corresponds under $\phi$ to $(y_j)\in \{\sum_{j\in\ZZ/t\ZZ} Y_j = s\}$; then we make the following observations.
\begin{enumerate}

\item Since $s,t$ are coprime, the inequalities $x_i \ge x_{i+s} - s$ hold for all $i\in\ZZ/t\ZZ$ if and only if $y_j \ge 0$ for all $j\in\ZZ/t\ZZ$;

\item The identity $\sum_{j=0}^{t-1} jy_{j+\ell} = k - x_{k+s\ell}$ follows (for any $\ell\in\ZZ/t\ZZ$) from the telescoping sum above (substituting $\sum_{j=0}^{t-1} y_{j+\ell} = s$ and $k = \frac{t-1}{2} + \frac{t-1}{2}\cdot s$);

\item If $x_i\equiv i\pmod{t}$ for all $i\in\ZZ/t\ZZ$, then $\sum j y_j = k - x_k \equiv 0\pmod{t}$ and $y_j\equiv0\pmod{1}$ for all $j\in\ZZ/t\ZZ$;

\item Suppose $\sum_{j=0}^{t-1} jy_{j} (= k - x_{k})$ is $0\pmod{t}$, i.e. $x_{k} \equiv k\pmod{t}$, and further $y_{j}\equiv0\pmod{1}$ for all $j\in\ZZ/t\ZZ$. Then for any $\ell\in\ZZ$, we have
\[ x_{k+s\ell} = k - \sum_{j=0}^{t-1} j y_{j+\ell} = k+s\ell - \sum_{j=0}^{t-1} (j+\ell) y_{j+\ell} \equiv k+s\ell - \sum_{j\in\ZZ/t\ZZ} jy_j \equiv k+s\ell\pmod{t}, \]
so $x_i \equiv i\pmod{t}$ for all $i\in\ZZ/t\ZZ$ (as $s,t$ are coprime). (Alternatively, we could look at the partial sums $z_0+\cdots+z_{\ell-1} = \frac1t (x_k - x_{k+\ell s} + \ell s)$.)
\end{enumerate}
The first and third items show that $\phi$ maps $A_t(s)$ into $\map{TD}_t(s)$. The first and fourth items show that $\phi^{-1}$ maps $\map{TD}_t(s)$ into $A_t(s)$. (We are using the fact that $\phi$ bijects a superset of $A_t(s)$ to a superset of $\map{TD}_t(s)$.) So $\phi$ bijects $A_t(s)$ to $\map{TD}_t(s)$, establishing the change of coordinates for $\mcal{C}_s\cap \mcal{C}_t$. Similarly, considering only the third and fourth items (ignoring the first item) establishes the change of coordinates for $\mcal{C}_t$.

Finally, the explicit formula for the inverse $\phi^{-1}$ establishes the desired description of the inverse $(a_{t,i})_{i\in\ZZ/t\ZZ} = \phi^{-1}(z_{t,j})_{j\in\ZZ/t\ZZ}$ of the isomorphism.
\end{proof}

\begin{remark}
Already in this proof we have seen the `cyclic shifts identity' $\sum_{j=0}^{t-1} j y_{j+\ell} \equiv -s\ell + \sum_{j\in\ZZ/t\ZZ} jy_j \pmod{t}$ for $t$ integers $(y_j)_{j\in\ZZ/t\ZZ}$ summing to $s$, which will feature more prominently in Proposition \ref{PROP:cyclic-shift}.
\end{remark}

\begin{corollary}[Weak version of Anderson's theorem \cite{Anderson}]
\label{COR:finitely-many-s,t-cores}

$A_t(s)$ and $\map{TD}_t(s)$ are discrete bounded sets, hence finite. In particular, there are finitely many $(s,t)$-cores, so the sums in Theorems \ref{THM:general-Armstrong}, \ref{THM:self-conjugate-Armstrong}, \ref{THM:general-Fayers}, \ref{THM:self-conjugate-Fayers} are finite and well-defined.
\end{corollary}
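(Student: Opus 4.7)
The plan is to work primarily with $\map{TD}_t(s)$, since its defining conditions are the simplest, and then transfer finiteness to $A_t(s)$ via the isomorphism established in Proposition \ref{PROP:change-to-z-coordinates}. The main observation is that the constraints defining $\map{TD}_t(s)$ already force each coordinate to lie in a bounded integer range, independent of any congruence condition modulo $t$.

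First I would argue directly that $\map{TD}_t(s)$ is finite. The integrality conditions $z_{t,i}\equiv 0\pmod{1}$ make $\map{TD}_t(s)$ a discrete subset of $\RR^t$, and the inequalities $z_{t,i}\ge 0$ together with $\sum_{i\in\ZZ/t\ZZ} z_{t,i} = s$ force $0\le z_{t,i}\le s$ for every coordinate. Thus $\map{TD}_t(s)$ sits inside the finite set of integer points of the bounded simplex $\{(z_i)\in\RR_{\ge0}^t : \sum z_i = s\}$, so $\map{TD}_t(s)$ is finite. (The additional congruence $\sum i z_{t,i}\equiv 0\pmod{t}$ only cuts this down further, so is not needed for finiteness.)

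Next I would invoke the affine isomorphism $\phi\colon A_t(s)\to \map{TD}_t(s)$ furnished by Proposition \ref{PROP:change-to-z-coordinates}. Since $\phi$ is a bijection and $\map{TD}_t(s)$ is finite, $A_t(s)$ is finite as well. (Alternatively one could give a direct argument for $A_t(s)$: walking around the cycle $i \mapsto i+s$, which is a single $t$-cycle because $\gcd(s,t)=1$, the inequalities $a_{t,i}\ge a_{t,i+s}-s$ control successive differences, and the sum condition $\sum a_{t,i}=\binom{t}{2}$ then pins down a bounded region.) By the two parameterizations given in Proposition \ref{PROP:change-to-z-coordinates}, $\mcal{C}_s\cap \mcal{C}_t$ is in bijection with either $A_t(s)$ or $\map{TD}_t(s)$, so there are only finitely many $(s,t)$-cores.

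Finally, the four sums in Theorems \ref{THM:general-Armstrong}, \ref{THM:self-conjugate-Armstrong}, \ref{THM:general-Fayers}, \ref{THM:self-conjugate-Fayers} are all indexed by (subsets of) $\mcal{C}_s\cap\mcal{C}_t$, with summands $\card{\lambda}$, $1$, or $\card{\stab(\lambda)}^{-1}\cdot\card{\lambda}$, each of which is a finite real number for each fixed $\lambda$ (note that $\stab_{G_{s,t}}(\lambda)$ is finite by Proposition \ref{PROP:stab-computation}, and the analogous self-conjugate stabilizer is a subgroup thereof). Since they are finite sums of finite quantities, they are well-defined, and the ratios in the four theorems make sense provided the denominators are nonzero, which is immediate because $\mcal{C}_s\cap\mcal{C}_t$ and $\mcal{D}_s\cap\mcal{D}_t$ both contain the empty partition. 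There is no real obstacle here; the content of the corollary is entirely packaged in the boundedness of the $z$-simplex $\{\sum z_i = s,\; z_i\ge 0\}$.
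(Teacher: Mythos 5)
Your proposal is correct and takes essentially the same route as the paper, which states the corollary without a separate proof precisely because the defining conditions of $\map{TD}_t(s)$ exhibit it as the set of lattice points of the bounded simplex $\{(z_i)\in\RR_{\ge0}^t : \sum_{i\in\ZZ/t\ZZ} z_i = s\}$ (possibly cut down by the congruence $\sum_{i\in\ZZ/t\ZZ} i z_{t,i}\equiv 0\pmod{t}$), with finiteness of $A_t(s)$ then transferring through the bijection of Proposition \ref{PROP:change-to-z-coordinates}. Your additional remarks---that the congruence condition is not needed for boundedness, that the stabilizers are finite by Proposition \ref{PROP:stab-computation}, and that the denominators are nonzero because the empty partition lies in $\mcal{D}_s\cap\mcal{D}_t\subseteq\mcal{C}_s\cap\mcal{C}_t$---are accurate and fill in the routine details the paper leaves implicit.
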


\begin{markednewpar}\leavevmode\end{markednewpar}\begin{arxiv}
\begin{sidermk}
\label{RMK:SIDE:Anderson-vs-Johnson-bijections}
As mentioned in the introduction, Anderson \cite{Anderson} actually computes the exact number of $(s,t)$-cores as the `rational Catalan number' $\frac{1}{s+t}\binom{s+t}{s}$, by bijecting the set of $(s,t)$-cores to down-right lattice paths from $(0,s)$ to $(t,0)$ staying below the connecting line (or tuples $(w_1,\ldots,w_{s+t})\in\{s,-t\}^{s+t}$ summing to $0$, with all nonnegative partial sums, where $w_i = s$ corresponds to a rightward step and $w_i = -t$ corresponds to a downward step); these are essentially $(s,t)$-Dyck paths, which Bizley \cite{Bizley} counts via `cyclic shifts of order $s+t$'.

One route to the bijection (mentioned in the introduction) goes from $(s,t)$-cores to beta-sets closed under subtraction by $s,t$, which correspond under negation and suitable translation to subsets of $\ZZ_{\ge0}$ that contain $0$ and are closed under addition by $s,t$, which correspond to the desired lattice paths (see e.g. \cite{ISL}).

At the beginning of our proof of Theorem \ref{THM:general-Armstrong} we implicitly give Johnson's variant using `cyclic shifts of order $t$' in the asymmetric $z$-coordinates. The criterion for $z$-coordinates to correspond $(s,t)$-cores can be geometrically framed as a divisibility condition on ``area under lattice paths from $(0,s)$ to $(t-1,0)$'' modulo $t$, in contrast to the Catalan-like condition for $w$-coordinates.
\end{sidermk}
\end{arxiv}

\subsection{Cyclic shifts in the \texorpdfstring{$z$}{z}-coordinates}
\label{SEC:z-coordinate-cyclic-shifts}

Johnson \cite{Johnson} relies on cyclic symmetry in his proofs of Anderson's theorem \cite{Anderson} (that there are exactly $\frac{1}{s+t}\binom{s+t}{s}$ distinct simultaneous $(s,t)$-cores) and Armstrong's general conjecture (Theorem \ref{THM:general-Armstrong}). For Theorems \ref{THM:general-Armstrong} and \ref{THM:general-Fayers}, we also rely on the following `cyclic shifts' argument.

\begin{proposition}[c.f. {\cite[proofs of Corollary 3.6 and Theorem 3.7]{Johnson}}]
\label{PROP:cyclic-shift}

Fix coprime $s,t\ge1$. Let $f(X_0,\ldots,X_{t-1})$ be a \emph{cyclic} complex-valued function (i.e. for all $i\in\ZZ/t\ZZ$ we have $f(X_0,\ldots,X_{t-1}) = f(X_i,\ldots,X_{i+t-1})$, with indices taken modulo $t$). Then
\[ \sum_{(z_{t,j})_{j\in\ZZ/t\ZZ}\in \map{TD}_t(s)} f(z_{t,0},\ldots,z_{t,t-1}) = \frac{1}{t}\sum_{\substack{x_j\ge0 \\ \sum_{j\in\ZZ/t\ZZ} x_j = s}} f(x_0,\ldots,x_{t-1}). \]
\end{proposition}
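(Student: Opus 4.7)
The plan is to split both sides into orbits of the cyclic shift action $\sigma\colon (x_0,\ldots,x_{t-1})\mapsto(x_1,\ldots,x_{t-1},x_0)$ of $\ZZ/t\ZZ$ on $\Omega\colonequals\{(x_0,\ldots,x_{t-1})\in\ZZ_{\ge0}^t : \sum_j x_j = s\}$. Since $f$ is cyclic by hypothesis, it is constant on each orbit $O$; writing $f(O)$ for this common value, the orbit contributes $\tfrac{|O|}{t}f(O)$ to the right-hand side and $|O\cap\map{TD}_t(s)|\cdot f(O)$ to the left-hand side. It therefore suffices to prove the orbit-wise identity $|O\cap\map{TD}_t(s)| = |O|/t$ for every orbit $O$.

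First I would show that every orbit has size exactly $t$. If some $x\in\Omega$ had stabilizer of order $d>1$, then $x$ would be periodic with period $p\colonequals t/d<t$, giving $s = \sum_j x_j = d\sum_{j=0}^{p-1} x_j$; so $d\mid s$, and combined with $d\mid t$ and $\gcd(s,t)=1$ this forces $d=1$, contradicting $d>1$. Thus all orbits have size $t$, and the required identity reduces to $|O\cap\map{TD}_t(s)|=1$.

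Next I would pin down which shift inside an orbit lies in $\map{TD}_t(s)$. The conditions defining $\map{TD}_t(s)$ beyond those of $\Omega$ amount to the single congruence $\sum_j j z_j \equiv 0 \pmod t$. Invoking the cyclic shifts identity $\sum_{j=0}^{t-1} j x_{j+\ell} \equiv \sum_j j x_j - s\ell \pmod t$ (noted just after the proof of Proposition~\ref{PROP:change-to-z-coordinates} for tuples summing to $s$), the residue $\sum_j j (\sigma^\ell x)_j \pmod t$ runs through each element of $\ZZ/t\ZZ$ exactly once as $\ell$ varies over $\{0,1,\ldots,t-1\}$, because $\gcd(s,t)=1$ makes multiplication by $-s$ a permutation of $\ZZ/t\ZZ$. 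Exactly one shift therefore lands in $\map{TD}_t(s)$, giving $|O\cap\map{TD}_t(s)|=1$ and closing the argument.

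The proof has no single technical hurdle; rather, its content is that coprimality of $s$ and $t$ enters in two complementary ways---eliminating nontrivial orbit stabilizers on one hand, and turning shift-by-$\ell$ into a bijection on the congruence side on the other---so that $\map{TD}_t(s)$ cleanly picks out exactly $|O|/t$ elements from each orbit. The only point to be careful about is verifying that no condition defining $\map{TD}_t(s)$ besides the $\sum jz_j$ congruence is sensitive to cyclic shifts, which is immediate from the definitions.
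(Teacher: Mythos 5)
Your proof is correct and takes essentially the same route as the paper's: both rest on the cyclic shifts identity $\sum_{j} j x_{j+\ell} \equiv \sum_{j} jx_j - s\ell \pmod{t}$ together with $\gcd(s,t)=1$ to conclude that each orbit of the cyclic $\ZZ/t\ZZ$-action meets $\map{TD}_t(s)$ exactly once. Your separate stabilizer argument that every orbit has size $t$ is correct but is subsumed in the paper's version, since the $t$ shifts already carry pairwise distinct residues of $\sum_j jz_j \pmod{t}$ and are therefore automatically pairwise distinct.
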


\begin{proof}
For any $t$ nonnegative integers $x_0,\ldots,x_{t-1}\ge0$ (indexed modulo $t$) summing to $s$, the cyclic permutations $(x_r,\ldots,x_{r+t-1})$ leave distinct residues via the `cyclic shifts identity'
\[ \sum_{j\in \ZZ/t\ZZ} j x_{r+j} \equiv -rs + \sum_{j\in\ZZ/t\ZZ}(r+j)x_{r+j} \equiv -rs + \sum_{j\in\ZZ/t\ZZ} jx_j \pmod{t}, \]
since $s$ is coprime to $t$. Thus each orbit of the cyclic $\ZZ/t\ZZ$-action contains exactly one point of $\map{TD}_t(s)$, and since $f$ is \emph{cyclic} (and the sums are over the finite sets $\map{TD}_t(s)$ and $\{(x_j)_{j\in\ZZ/t\ZZ}\in\ZZ_{\ge0}^t: \sum_{j\in\ZZ/t\ZZ} x_j = s\}$, by Corollary \ref{COR:finitely-many-s,t-cores}), the result follows.
\end{proof}

\begin{corollary}[Relevant cyclic sums]
\label{COR:special-cyclic-sums}

Fix coprime $s,t\ge1$. Then we evaluate the sum $\sum_{(z_{t,j})\in \map{TD}_t(s)} f(z_{t,0},\ldots,z_{t,t-1})$ in the cases listed below, where the most important terms have been boxed. For a vector or weak composition $\bd{x} = (x_i)_{i\in\ZZ/t\ZZ}$ with $t$ components, define the sum $\abs{\bd{x}} \colonequals \sum_{i\in\ZZ/t\ZZ} x_i$ and (if appropriate) the multinomial coefficient $\binom{\abs{\bd{x}}}{\bd{x}} \colonequals \binom{\abs{\bd{x}}}{x_1,\ldots,x_t}$.

First we look at ``exponential'' cases.
\begin{itemize}
\item \fbox{$\frac1t\cdot t^s$} when $f = \binom{\abs{\bd{x}}}{\bd{x}}\cdot\fbox{$1$}$ (constant);

\item \fbox{$\frac1t\cdot s t^{s-1}$} when $f = \binom{\abs{\bd{x}}}{\bd{x}}\cdot \frac1t\sum_{i\in\ZZ/t\ZZ} \fbox{$x_i$}$ (linear);

\item \fbox{$\frac1t\cdot s(s-1) t^{s-2}$} when $f = \binom{\abs{\bd{x}}}{\bd{x}}\cdot \frac1t\sum_{i\in\ZZ/t\ZZ} \fbox{$x_i(x_i - 1)$}$ or $f = \binom{\abs{\bd{x}}}{\bd{x}}\cdot \frac1t\sum_{i\in\ZZ/t\ZZ} \fbox{$x_i x_{i+r}$}$ for some $r\not\equiv0\pmod{t}$ (quadratic).
\end{itemize}

Next we look at ``ordinary'' cases.

\begin{itemize}
\item \fbox{$\frac1t\cdot\binom{s+t-1}{t-1}$} when $f = \fbox{$1$}$ (constant);

\item \fbox{$\frac1t\cdot\binom{s+t-1}{t}$} when $f = \frac1t\sum_{i\in\ZZ/t\ZZ} \fbox{$x_i$}$ (linear);

\item \fbox{$\frac1t\cdot2\binom{s+t-1}{t+1}$} when $f = \frac1t\sum_{i\in\ZZ/t\ZZ} \fbox{$x_i(x_i-1)$}$ (square quadratic);

\item \fbox{$\frac1t\cdot\binom{s+t-1}{t+1}$} when $f = \frac1t\sum_{i\in\ZZ/t\ZZ} \fbox{$x_i x_{i+r}$}$ for some $r\not\equiv0\pmod{t}$ (mixed quadratic).
\end{itemize}
\end{corollary}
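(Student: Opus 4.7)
The plan is to apply Proposition \ref{PROP:cyclic-shift} case-by-case and reduce each cyclic sum over $\map{TD}_t(s)$ to $\frac{1}{t}$ times an unrestricted sum over weak compositions $\bd{x}=(x_0,\ldots,x_{t-1})$ of $s$. Since each of the listed functions $f$ is clearly cyclic in $(X_0,\ldots,X_{t-1})$, the hypothesis of Proposition \ref{PROP:cyclic-shift} is met. After this reduction, the remaining work is to evaluate two families of sums over weak compositions: the \emph{exponential} family (with the multinomial weight $\binom{|\bd{x}|}{\bd{x}}$) and the \emph{ordinary} family (with no such weight). This already isolates the overall $\frac{1}{t}$ prefactor that appears (boxed) in every line of the corollary statement.

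For the \emph{exponential} family, I would use the multinomial theorem $\sum_{|\bd{x}|=s}\binom{s}{\bd{x}}=t^s$ (constant $f$), together with the ``shift-down'' identities $\binom{s}{\bd{x}}\,x_i = s\binom{s-1}{\bd{x}-e_i}$ and $\binom{s}{\bd{x}}\,x_i(x_i-1) = s(s-1)\binom{s-2}{\bd{x}-2e_i}$, and similarly $\binom{s}{\bd{x}}\,x_ix_{i+r} = s(s-1)\binom{s-2}{\bd{x}-e_i-e_{i+r}}$ (valid when $r\not\equiv 0\pmod t$, so that $e_i\ne e_{i+r}$). Each such identity reduces the inner sum to a lower-order multinomial sum of total value $t^{s-1}$ or $t^{s-2}$; summing over the $t$ choices of $i$ and combining with the $\frac{1}{t}$ factor built into $f$, then dividing by a further $\frac{1}{t}$ from Proposition \ref{PROP:cyclic-shift}, yields the four boxed values $\frac{1}{t}\cdot t^s$, $\frac{1}{t}\cdot st^{s-1}$, and $\frac{1}{t}\cdot s(s-1)t^{s-2}$ (the last twice).

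For the \emph{ordinary} family, I would use generating functions:
\[
\sum_{x\ge0} z^x=\tfrac{1}{1-z},\qquad \sum_{x\ge0} x\,z^x=\tfrac{z}{(1-z)^2},\qquad \sum_{x\ge0}\binom{x}{2}z^x=\tfrac{z^2}{(1-z)^3},
\]
and extract the coefficient of $z^s$ in the appropriate product of $t$ such series. The constant case recovers the usual count $\binom{s+t-1}{t-1}$ of weak compositions. The linear case reduces immediately since $\sum_i x_i=s$ is constant on the domain, giving $\frac{s}{t}\binom{s+t-1}{t-1}=\binom{s+t-1}{t}$. For the square-quadratic $x_i(x_i-1)=2\binom{x_i}{2}$, one factor contributes $\tfrac{z^2}{(1-z)^3}$ while the other $t-1$ contribute $\tfrac{1}{(1-z)^{t-1}}$, giving $[z^s]\tfrac{z^2}{(1-z)^{t+2}}=\binom{s+t-1}{t+1}$, so summing over $i$ and including the $2$ and the internal $\frac{1}{t}$ yields $\frac{1}{t}\cdot 2\binom{s+t-1}{t+1}$. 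For the mixed-quadratic $x_ix_{i+r}$ with $r\not\equiv0$, two factors contribute $\tfrac{z}{(1-z)^2}$ and the remaining $t-2$ contribute $\tfrac{1}{(1-z)^{t-2}}$, again giving $[z^s]\tfrac{z^2}{(1-z)^{t+2}}=\binom{s+t-1}{t+1}$ per $i$, for a final $\frac{1}{t}\binom{s+t-1}{t+1}$.

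None of the individual computations is hard; the main pitfall is bookkeeping. Specifically, one must carefully account for the two distinct factors of $\frac{1}{t}$ in each line---one from Proposition \ref{PROP:cyclic-shift} and one built into the definition of $f$ as $\frac{1}{t}\sum_{i\in\ZZ/t\ZZ}(\cdots)$---and must correctly handle the hypothesis $r\not\equiv 0\pmod t$ in the mixed-quadratic case (which is what allows $e_i$ and $e_{i+r}$ to be distinct standard basis vectors, and what ensures only two of the $t$ generating-function factors carry a $z$).
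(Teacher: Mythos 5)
Your proposal is correct and follows essentially the same route as the paper: reduce via Proposition \ref{PROP:cyclic-shift} to unrestricted sums over weak compositions, then evaluate the exponential cases by the multinomial theorem (your absorption identities $\binom{s}{\bd{x}}x_i = s\binom{s-1}{\bd{x}-e_i}$ etc.\ are just the combinatorial form of the paper's differentiation of $(Z_0+\cdots+Z_{t-1})^s$) and the ordinary cases by coefficient extraction from $\prod_i(1-Z_iT)^{-1}$. The bookkeeping of the two $\frac1t$ factors and the role of $r\not\equiv0\pmod t$ are handled exactly as in the paper's proof.
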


\begin{remark}
We will use these explicit (generating function) calculations below in the proofs of Theorems \ref{THM:general-Armstrong} and \ref{THM:general-Fayers}, instead of the coarser Ehrhart and Euler--Maclaurin theory language of Johnson \cite{Johnson}. (See Remarks \ref{RMK:conceptual-proof} and \ref{RMK:higher-power-methods} for further conceptual discussion.) \begin{arxiv}(However, the difference is probably not as large as it might sound: Ehrhart theory simply captures many of the most important qualitative aspects of the generating functions related to lattice point geometry; see the textbook \cite{Ehrhart} for more details.)\end{arxiv}
\end{remark}

\begin{proof}[Proof of ``exponential'' cases]
First we use Proposition \ref{PROP:cyclic-shift} to reduce the cyclic sums in question over $\map{TD}_t(s)$ to sums over the easier domain of $\{x_i\ge0: \abs{\bd{x}} = s\}$. On this easier domain, the standard tool of exponential generating functions ($\prod_{i\in\ZZ/t\ZZ} \exp(Z_i T)$) suffices. Equivalently, one may directly differentiate the multinomial $\sum_{\abs{\bd{x}} = s} \binom{\abs{\bd{x}}}{\bd{x}} Z_0^{x_0} \cdots Z_{t-1}^{x_{t-1}} = (Z_0+\cdots+Z_{t-1})^s$. For example, by differentiating zero times we get $\sum_{\abs{\bd{x}} = s} \binom{\abs{\bd{x}}}{\bd{x}} = t^s$; by differentiating once we get $\sum_{\abs{\bd{x}} = s} \binom{\abs{\bd{x}}}{\bd{x}} x_0 = s t^{s-1}$; and by differentiating twice we get $\sum_{\abs{\bd{x}} = s} \binom{\abs{\bd{x}}}{\bd{x}} x_0(x_0 - 1) = \sum_{\abs{\bd{x}} = s} \binom{\abs{\bd{x}}}{\bd{x}} x_0 x_r = s(s-1) t^{s-2}$ (if $t\nmid r$).
\end{proof}

\begin{proof}[Proof of ``ordinary'' cases]
Once again we use Proposition \ref{PROP:cyclic-shift} to reduce the cyclic sums in question over $\map{TD}_t(s)$ to sums over the easier domain of $\{x_i\ge0: \abs{\bd{x}} = s\}$, where the standard tool of ordinary generating functions ($\prod_{i\in\ZZ/t\ZZ} (1-Z_i T)^{-1}$) suffices. For example, by differentiating zero times we get $\sum_{\abs{\bd{x}} = s} 1 = [T^s](1-T)^{-t} = \binom{s+t-1}{t-1}$; by differentiating once (with respect to $Z_0$) we get $\sum_{\abs{\bd{x}} = s} x_0 = [T^s]T(1-T)^{-t-1} = \binom{(s-1)+(t+1)-1}{(t+1)-1}$; by differentiating twice with respect to $Z_0$ we get $\sum_{\abs{\bd{x}} = s} x_0(x_0 - 1) = [T^s]2T^2(1-T)^{-t-2} = 2\binom{s+t-1}{t+1}$; or by differentiating once with respect to each of $Z_0,Z_r$ (if $t\nmid r$) we get $\sum_{\abs{\bd{x}} = s} x_0 x_r = [T^s]T^2(1-T)^{-t-2} = \binom{s+t-1}{t+1}$.
\end{proof}

\subsection{Size of a \texorpdfstring{$t$}{t}-core}
\label{SEC:z-coordinates-partition-size}

The $c$- and $a$- coordinate versions of the following lemma could have been placed earlier, but we wish to use the $z$-coordinates, most relevant right before Section \ref{SEC:main-theorem-proofs}. The $c$-coordinate version has been used in \cite{Cranks-t-cores-expos} and \cite{Physics-expos} to deduce certain properties of the generating function for $t$-cores.

\begin{lemma}[c.f. {\cite[proof of Theorem 2.10]{Johnson}}]
\label{LEM:size-of-partition-z-coordinates}

Fix $t\ge1$ and a $t$-core $\lambda$.
\begin{itemize}
\item In Johnson's $c_t$-coordinates, $\card{\lambda} = \sum_{i=0}^{t-1}(\frac{t}{2} c_{t,i}^2 - (\frac{t-1}{2} - i)c_{t,i})$, from \cite[Theorem 2.10]{Johnson}; \cite[Bijection 2]{Cranks-t-cores-expos}; \cite{Physics-expos}, up to the linear relation $\sum_{i\in\ZZ/t\ZZ} c_{t,i} = 0$.

\item In Fayers' $a_t$-coordinates, $\card{\lambda} = -\frac{1}{24}(t^2-1) +  \frac{1}{2t}\sum_{i\in\ZZ/t\ZZ} [a_{t,i}-\frac{t-1}{2}]^2$ (a \emph{symmetric} polynomial);

\item Fix $s\ge1$ coprime to $t$. In the extended $z_t$-coordinates,
\mathd{\card{\lambda} = -\frac{1}{24}(t^2-1) + \frac{1}{24}(t^2-1)\sum_{\ell\in\ZZ/t\ZZ}z_{t,\ell}^2 + M_2(z_{t,0},\ldots,z_{t,t-1})}
(a \emph{cyclic} polynomial), where $M_2\in\ZZ[X_0,\ldots,X_{t-1}]$ is the ``leftover'' \emph{cyclic} homogeneous quadratic with only `mixed' terms (i.e. no square terms $X_0^2,\ldots,X_{t-1}^2$), and with coefficient sum $-\frac{1}{24}t(t^2-1)$.
\end{itemize}
In particular, in $z$-coordinates, the sum of the coefficients of $\card{\lambda} + \frac{1}{24}(t^2-1)$---the non-constant (i.e. homogeneous quadratic) part of the $z$-expression---is $0$.
\end{lemma}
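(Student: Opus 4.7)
The plan is to take the $c_t$-coordinate formula as given (it is cited from the literature) and to deduce the $a_t$- and $z_t$-coordinate formulas from it by two successive affine substitutions.

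For the $a_t$-formula, I would substitute the relation $a_{t,i} = i - t c_{t,-1-i}$ from Section~\ref{SEC:s-sets-and-a-coordinates-vs-c-coordinates} into the $c_t$-formula, re-indexing $j \colonequals t-1-i$. Writing $b_j \colonequals a_{t,j} - \frac{t-1}{2}$, each summand simplifies by a direct algebraic manipulation to
\[ \frac{t}{2}c_{t,i}^2 - \left(\frac{t-1}{2}-i\right)c_{t,i} = \frac{1}{2t}\left(b_{t-1-j}^2 - \left(\frac{t-1}{2}-j\right)^2\right). \]
Summing over $j \in \{0,\ldots,t-1\}$ and using the standard sum-of-squares identity $\sum_{j=0}^{t-1}(\frac{t-1}{2}-j)^2 = \frac{t(t^2-1)}{12}$ yields the claimed $a_t$-formula. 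The constant $-\frac{1}{24}(t^2-1)$ comes out naturally from dividing by $2t$.

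For the $z_t$-formula, I would then apply the inverse change of variables from Proposition~\ref{PROP:change-to-z-coordinates}, namely $a_{t,k+\ell s} - \frac{t-1}{2} = \sum_{j=0}^{t-1}\gamma_j z_{t,j+\ell}$ where $\gamma_j \colonequals \frac{t-1}{2}-j$. The decisive observation is that this substitution is \emph{purely linear} (no constant term) in the $z_{t,j}$, so plugging into $-\frac{t^2-1}{24} + \frac{1}{2t}\sum_i(a_{t,i}-\frac{t-1}{2})^2$ produces $-\frac{t^2-1}{24}$ plus a homogeneous quadratic in $z_{t,j}$ with no linear part. Using the bijection $\ell \mapsto k+\ell s$ on $\ZZ/t\ZZ$ (valid as $\gcd(s,t)=1$), the coefficient of each square $z_{t,m}^2$ equals
\[ \frac{1}{2t}\sum_{\ell \in \ZZ/t\ZZ}\gamma_{m-\ell}^2 = \frac{1}{2t}\cdot\frac{t(t^2-1)}{12} = \frac{t^2-1}{24}, \]
uniformly in $m$, leaving the mixed-term piece $M_2$.

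Cyclicity of $M_2$ is immediate from either viewpoint: a direct expansion shows the coefficient of $z_{t,m}z_{t,m'}$ is $\frac{1}{t}\sum_\ell \gamma_{m-\ell}\gamma_{m'-\ell}$, which depends only on $m-m' \bmod t$; alternatively, cyclically shifting $(z_{t,j})$ by one corresponds under the inverse change of variables to cyclically shifting $(a_{t,i})$ by $s$, which preserves the manifestly symmetric quantity $\sum_i(a_{t,i}-\frac{t-1}{2})^2$. For the coefficient sum of $M_2$ (and the final claim), I would evaluate the entire homogeneous quadratic at the all-ones point: there $a_{t,k+\ell s}-\frac{t-1}{2} = \sum_j \gamma_j = 0$, so the whole quadratic vanishes. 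Subtracting the contribution $t \cdot \frac{t^2-1}{24}$ from $\sum_\ell z_{t,\ell}^2$ forces the coefficient sum of $M_2$ to be $-\frac{1}{24}t(t^2-1)$, and the total coefficient sum of the non-constant part is $0$. The main obstacle is really just careful index bookkeeping through two consecutive linear substitutions; no deeper idea is needed beyond the affine formulas already established.
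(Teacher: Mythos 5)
Your proposal is sound, and your $a$-to-$z$ step coincides with the paper's own argument: the paper likewise uses $\gcd(s,t)=1$ to re-index $\sum_{i\in\ZZ/t\ZZ}[a_{t,i}-\frac{t-1}{2}]^2$ as $\sum_{\ell\in\ZZ/t\ZZ}[a_{t,k+\ell s}-\frac{t-1}{2}]^2$, substitutes the homogeneous linear forms $a_{t,k+\ell s}-\frac{t-1}{2}=\sum_{j=0}^{t-1}(\frac{t-1}{2}-j)z_{t,j+\ell}$, observes that the result is a cyclic quadratic with no linear part, reads off the square coefficient $\sum_{j=0}^{t-1}(\frac{t-1}{2}-j)^2=\frac{1}{12}t(t^2-1)$, and obtains all coefficient sums by evaluating at the all-ones point, where the whole quadratic vanishes---exactly your computation. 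Where you genuinely differ is the middle bullet: you deduce the $a_t$-formula from the cited $c_t$-formula via $a_{t,i}=i-tc_{t,-1-i}$, whereas the paper proves the $a_t$-formula from scratch by a ``discrete calculus'' computation, writing $\card{\lambda}$ as a signed sum of $x+\frac12$ over beta-values and non-beta-values, splitting by residue class modulo $t$, and solving the recursion $F(a_{t,i})-F(a_{t,i}-t)=[a_{t,i}-t]+\frac12$ (and, in the arXiv version, it proves the $c_t$-formula directly by the same method). The paper explicitly flags your conversion route as an available alternative, so this is a recognized shortcut: it buys brevity and pure algebra at the cost of outsourcing the first bullet entirely to the citations, while the paper's direct argument explains geometrically why the $a$-expression is symmetric.

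One index slip in your displayed identity: with $j\colonequals t-1-i$ the relation gives $c_{t,i}=c_{t,-1-j}=\frac{1}{t}(j-a_{t,j})$, and the summand simplifies to
\[
\frac{t}{2}c_{t,i}^2-\left(\frac{t-1}{2}-i\right)c_{t,i}
=\frac{1}{2t}\left(b_j^2-\left(\frac{t-1}{2}-j\right)^2\right),
\]
with $b_j$, not $b_{t-1-j}$ as you wrote. A concrete check: for $t=3$ and $\lambda=(2)$ one has $(a_{t,0},a_{t,1},a_{t,2})=(0,4,-1)$ and $(c_{t,0},c_{t,1},c_{t,2})=(1,-1,0)$; the $i=0$ (so $j=2$) summand is $\frac12$, matching $\frac16(b_2^2-1)=\frac12$ but not your $\frac16(b_0^2-1)=0$. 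Since $j\mapsto b_j^2$ and $j\mapsto b_{t-1-j}^2$ have the same sum over a complete residue system modulo $t$, the slip cancels upon summation, and your final formulas---including the $z$-coordinate statement and the coefficient-sum claims---are unaffected.
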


\begin{markednewpar}\leavevmode\end{markednewpar}\begin{arxiv}
\begin{sidermk}
We will only explicitly use this result for $(s,t)$-cores $\lambda\in \mcal{C}_s\cap \mcal{C}_t$, when $s,t$ are coprime, in the proof of Theorems \ref{THM:general-Armstrong} and \ref{THM:general-Fayers}. But the general $t$-core version may help for other problems.
\end{sidermk}

\begin{sidermk}
In principle we could compute the coefficients of $M_2$ more explicitly, but by the computations in Corollary \ref{COR:special-cyclic-sums}, we will not need to explicitly distinguish the different terms of $M_2$.
\end{sidermk}
\end{arxiv}

\begin{remark}
Our proofs of Theorems \ref{THM:general-Armstrong} and \ref{THM:general-Fayers} rely crucially on the cyclic symmetry of the formula for the size $\card{\lambda}$ in the $z$-coordinates. (Johnson's proof of Theorem \ref{THM:general-Armstrong} does as well \cite{Johnson}.) Due to the cyclic nature of the change of variables relating the $z$- and $a$- coordinates (see Proposition \ref{PROP:change-to-z-coordinates}), it suffices to understand why $\card{\lambda}$ is symmetric in the $a$-coordinates.
\end{remark}

The following proof directly explains the (complete) symmetry in the $a$-coordinates, but one could also convert from the $c$-coordinates using Section \ref{SEC:s-sets-and-a-coordinates-vs-c-coordinates}.

\begin{proof}[Discrete calculus proof for $a$-coordinates]

Split the partition into two halves by cutting along the $y$-axis (see Figure \ref{FIG:johnson-russian-notation-area-computation}). We calculate the size of the partition (or area of the partition diagram) as
\[ \card{\lambda} = \sum_{\substack{x\in B^\lambda \\ x+\frac12 > 0}} \left(x + \frac12\right) - \sum_{\substack{x\notin B^\lambda \\ x+\frac12 < 0}} \left(x + \frac12\right). \]

\begin{figure}
\centering

\trimbox{0cm 1.7cm 0cm 0cm}{
\begin{tikzpicture}
\begin{scope}[gray, ultra thin, scale=.6] %changed thickness
\clip (-5.5, 5.5) rectangle (5.5, -5);
\draw[rotate=45, scale=1.412] (0,0) grid (6,6);
\end{scope}

\begin{scope}[rotate=45, scale=.6*1.412] %changed thickness
\draw (0,5.5) -- (0, 3) -- (1,3) -- (1,2) -- (3,2) -- (3,0) -- (5.5,0);
\end{scope}

%emphasizing the most important edges
\begin{scope}[rotate=45, ultra thick, scale=.6*1.412]
\draw (0, 3) -- (1,3);
\draw (1,2) -- (2,2);
\draw (3,2) -- (3,0);
\end{scope}

% \begin{scope}[scale=.6, dotted]

% \draw (-4.5,0) -- (-4.5, 4.5);
% \draw (-3.5,0) -- (-3.5, 3.5);
% \draw (-2.5,0) -- (-2.5, 3.5);
% \draw (-1.5,0) -- (-1.5, 3.5);
% \draw (-.5,0) -- (-.5, 3.5);
% \draw (.5,0) -- (.5, 4.5);
% \draw (1.5,0) -- (1.5, 4.5);
% \draw (2.5,0) -- (2.5, 3.5);
% \draw (3.5,0) -- (3.5, 3.5);
% \draw (4.5,0) -- (4.5, 4.5);
% \end{scope}

\begin{scope}[scale=.6, yshift=-.5cm]
\draw[thick, dotted] (0,-.5) -- (0,6.5); %modified and changed thickness to give long y-axis
\draw (-5.5,0) node{$\cdots$};
\draw (-4.5,0) circle (.3) node[below=3pt]{$\frac{9}{2}$};
\draw (-3.5,0) circle (.3) node[below=3pt]{$\frac{7}{2}$};
\filldraw (-2.5,0) circle (.3) node[below=3pt]{$\frac{5}{2}$};
\draw (-1.5,0) circle (.3) node[below=3pt]{$\frac{3}{2}$};
\filldraw (-.5,0) circle (.3) node[below=3pt]{$\frac{1}{2}$};
\filldraw (.5,0) circle (.3) node[below=3pt]{$\frac{-1}{2}$};
\draw (1.5,0) circle (.3) node[below=3pt]{$\frac{-3}{2}$};
\draw (2.5,0) circle (.3) node[below=3pt]{$\frac{-5}{2}$};
\filldraw (3.5,0) circle (.3) node[below=3pt]{$\frac{-7}{2}$};
\filldraw (4.5,0) circle (.3) node[below=3pt]{$\frac{-9}{2}$};
\draw (5.5,0) node{$\cdots$};
\end{scope}
\end{tikzpicture}
}

\caption{\emph{Russian notation} for $\lambda = (3,2,2,0,\ldots)$, based on Figure \ref{FIG:johnson-russian-notation}. The $y$-axis (dotted) is distinguished for area computation: $\card{\lambda} = \frac52 + \frac12 -(\frac{-3}{2}) - (\frac{-5}{2})$.} \label{FIG:johnson-russian-notation-area-computation}
\end{figure}

As written this holds for any partition, but we want to simplify it further for $t$-cores $\lambda$. As we are working with the $a_{t,i}$-coordinates, we will break up the contributions by residue class modulo $t$. Fix $0\le i\le t-1$, and define the $i+t\ZZ$ area contribution $F(a_{t,i})$ (i.e. contribution in the sums above from $x$ congruent to $i\pmod{t}$); we will study how it changes as $a_{t,i}$ varies along the residue class $i+t\ZZ$. We have
\begin{itemize}
\item $F(i) = 0$ (no contribution when $B^\lambda\cap(i+t\ZZ) = \{\ldots,i-3s,i-2s,i-s\}$);

\item $F(a_{t,i}) - F(a_{t,i} - t) = [a_{t,i} - t] + \frac12$ (note that this holds both in the `positive/left case' $a_{t,i} \ge i+t$ and the `negative/right case' $a_{t,i} \le i$).
\end{itemize}
Standard discrete calculus yields $F(a_{t,i}) = G(a_{t,i}) - G(i)$, where $G(x) = \frac1{2t} x(x-t) + \frac12\cdot \frac1t x$. Completing the square gives $F(a_{t,i}) = \frac{1}{2t}[a_{t,i} - \frac{t-1}{2}]^2 - \frac{1}{2t}[i - \frac{t-1}{2}]^2$.

To finish, we sum over all $0\le i\le t-1$, using the identity $\frac{1}{2t}\sum_{i=0}^{t-1} (i - \frac{t-1}{2})^2 = \frac{1}{24}(t^2-1)$.
\end{proof}

\begin{markednewpar}\leavevmode\end{markednewpar}\begin{arxiv}
\begin{proof}[Discrete calculus proof for $c$-coordinates]
Mimicking the proof in the $a$-coordinates, the area contribution from $c_{t,i}$ is $(i+\frac12) + \sum_{u=0}^{-c_{t,i}} (tu - i - \frac12) = t\binom{-c_{t,i}+1}{2} - (i+\frac12) \binom{-c_{t,i}}{1} = \frac{t}{2} c_{t,i}^2 - (\frac{t-1}{2} - i)c_{t,i}$, where the $i+\frac12$ in the front is just a correction term, as when $-c_{t,i} = 0$, the contribution should be $0$. To finish, we sum the contributions over all $0\le i\le t-1$.
\end{proof}
\end{arxiv}

\begin{proof}[Proof of $a$-to-$z$ translation]

Write $\card{\lambda} = -\frac{1}{24}(t^2-1) + \frac{1}{2t}\cdot P$ for convenience. Proposition \ref{PROP:change-to-z-coordinates} (which holds for $t$-cores, not just $(s,t)$-cores) gives $a_{t,k+\ell s} - \frac{t-1}{2} = \sum_{j=0}^{t-1} (\frac{t-1}{2} - j) z_{t,j+\ell}$ for all $\ell\in\ZZ/t\ZZ$, since $s,t$ are coprime. Then
\begin{equation}\label{EQ:size-a-to-z-translation}
P\colonequals \sum_{i=0}^{t-1} \left[ a_{t,i} - \frac{t-1}{2} \right]^2
= \sum_{\ell\in\ZZ/t\ZZ} \left[ a_{t,k+\ell s} - \frac{t-1}{2} \right]^2
= \sum_{\ell\in\ZZ/t\ZZ} \left[ \sum_{j=0}^{t-1} \left(\frac{t-1}{2} - j \right) z_{t,j+\ell} \right]^2
\end{equation}
is a cyclic quadratic polynomial in the $z_{t,\ell}$, so $P = C(t)\sum_{\ell\in\ZZ/t\ZZ}z_{t,\ell}^2 + H(z_{t,0},\ldots,z_{t,t-1})$ for some constant $C$ depending only on $t$, and a cyclic homogeneous quadratic $H\in \ZZ[X_0,\ldots,X_{t-1}]$ with no `square terms' (i.e. $X_0^2,\ldots,X_{t-1}^2$). We make the following calculations.
\begin{itemize}
\item $P$ vanishes when $a_{t,\ell} = \frac{t-1}{2}$ for all $\ell\in\ZZ/t\ZZ$, or equivalently when the $z$-coordinates are all equal. So the sum of $z$-coefficients of $\card{\lambda} + \frac{1}{24}(t^2-1) = \frac{1}{2t}\cdot P$ is $0$.

\item $C(t) = \sum_{j=0}^{t-1} (\frac{t-1}{2} - j)^2 = \frac{1}{12} t(t^2-1)$.

\item $H(1,\ldots,1) = P(1,\ldots,1) - C(t)\sum_{\ell\in\ZZ/t\ZZ}1^2 = 0 - t C(t) = -\frac{1}{12} t^2(t^2-1)$.
\end{itemize}
Substituting $P$ into $\card{\lambda} = -\frac{1}{24}(t^2-1) + \frac{1}{2t}\cdot P$ finishes the job.
\end{proof}

\section{Proofs of general conjectures}
\label{SEC:main-theorem-proofs}

In this section, we first give a proof of Armstrong's general conjecture by direct computation. We then use the same methods to prove Fayers' general conjecture.

\begin{proof}[Proof of Theorem \ref{THM:general-Armstrong} by direct computation]
We use $z$-coordinates. By Corollary \ref{COR:special-cyclic-sums}, the denominator is just $\sum_{\map{TD}_t(s)}1 = \frac1t\binom{s+t-1}{t-1}$ (the number of $(s,t)$-cores).

Using Lemma \ref{LEM:size-of-partition-z-coordinates} and Corollary \ref{COR:special-cyclic-sums} with the identity $x^2 = x(x-1) + x$, the numerator $\sum_{\map{TD}_t(s)}\card{\lambda} = \sum_{\map{TD}_t(s)}(-\frac{1}{24}(t^2-1)\cdot 1 + \frac{1}{24}(t^2-1)\cdot\sum_{\ell\in\ZZ/t\ZZ} z_{t,\ell}^2 + M_2(z_{t,\ell}))$ becomes
\begin{align*}
&-\frac{t^2-1}{24}\cdot\frac1t\binom{N}{t-1} + \frac{t^2-1}{24}\cdot t\cdot \frac1t \left[\binom{N}{t} + 2\binom{N}{t+1}\right] - \frac{t(t^2-1)}{24}\cdot\frac1t\binom{N}{t+1} \\
% &= \frac{t^2-1}{24}  \left[ -\frac1t\binom{N}{t-1} + \binom{N}{t} + (2-1)\binom{N}{t+1} \right]  \\
&= \frac{t^2-1}{24}\cdot\frac1t\binom{N}{t-1} \left[-1 + t\cdot\frac{s}{t} + t\cdot\frac{s(s-1)}{t(t+1)} \right] ,
\end{align*}
where we have suppressed $N\colonequals s+t-1$ and used $\binom{N}{t} = \frac{s}{t}\binom{N}{t-1}$ and $\binom{N}{t+1} = \frac{s-1}{t+1}\binom{N}{t}$ (viewed as polynomial identities in $s$, for fixed $t\ge1$). Finally, dividing the numerator expression by the denominator $\frac1t\binom{N}{t-1}$ yields $\frac{1}{24}(t^2-1)(-1+s+\frac{s(s-1)}{t+1})$, which simplifies to $\frac{1}{24}(t-1)(t+1)\frac{s-1}{t+1}[(t+1)+s] = \frac{1}{24}(s-1)(t-1)(s+t+1)$.
\end{proof}

The same technique proves Fayers' general conjecture.

\begin{proof}[Proof of Theorem \ref{THM:general-Fayers}]
In $z$-coordinates, the stabilizer $\stab_{G_{s,t}}(\lambda)$ has size $\prod_{i\in\ZZ/t\ZZ} z_{t,i}!$ (by Corollary \ref{COR:s-set-and-stabilizer-data-in-z-coordinates}). By Corollary \ref{COR:special-cyclic-sums}, the denominator times $s!$ is just $\sum_{\map{TD}_t(s)}\binom{\abs{\bd{z_t}}}{\bd{z_t}} = \frac1t\cdot t^s = t^{s-1}$.

Using Lemma \ref{LEM:size-of-partition-z-coordinates} and Corollary \ref{COR:special-cyclic-sums} with the identity $x^2 = x(x-1) + x$, the numerator times $s!$, i.e. $\sum_{\map{TD}_t(s)}\binom{\abs{\bd{z_t}}}{\bd{z_t}}\card{\lambda} = \sum_{\map{TD}_t(s)}\binom{\abs{\bd{z_t}}}{\bd{z_t}}(-\frac{1}{24}(t^2-1)\cdot 1 + \frac{1}{24}(t^2-1)\cdot\sum_{\ell\in\ZZ/t\ZZ} z_{t,\ell}^2 + M_2(z_{t,\ell}))$, becomes
\[
-\tfrac{1}{24}(t^2-1)\cdot\tfrac1t\cdot t^s + \tfrac{1}{24}(t^2-1)\cdot [ st^{s-1} + s(s-1)t^{s-2} ] - \tfrac{1}{24}(t^2-1)\cdot s(s-1)t^{s-2},
\]
which simplifies to $\frac{1}{24}(t^2-1) t^{s-1} (s-1)$. Finally, dividing ($s!$ times the) numerator by ($s!$ times the) denominator yields the desired ratio of $\frac{1}{24}(s-1)(t^2-1)$.
\end{proof}

\begin{remark}
\label{RMK:conceptual-cancellation-Fayers-general}

Conceptually, the $s(s-1)t^{s-2}$ coefficients cancel because they collect to form the sum of the coefficients of $\card{\lambda} + \frac{1}{24}(t^2-1)$, which is $0$ as noted in Lemma \ref{LEM:size-of-partition-z-coordinates}.
\end{remark}

\section{Self-conjugate analogs}
\label{SEC:self-conjugate-analogs}

Using almost the same methods as before, we build up to proofs of Theorems \ref{THM:self-conjugate-Armstrong} and \ref{THM:self-conjugate-Fayers} in Section \ref{SEC:main-theorem-proofs-self-conjugate}. However, we no longer need a cyclic shifts argument, because the parameterization of self-conjugate cores (see Proposition \ref{PROP:change-to-z-coordinates-self-conjugate}) is simpler.

\subsection{Background: beta-sets, charges, \texorpdfstring{$s$}{s}-sets, and conjugation}
\label{SEC:beta-set-and-s-charges-of-conjugate-partition}

\begin{proposition}[c.f. {\cite[Lemma 4.3]{Fayers}}]
\label{PROP:conjugate-beta-set}

Fix $x\in\ZZ$.\begin{arxiv} See Figure \ref{FIG:johnson-russian-notation}. \end{arxiv} Then $x\in B^{\conj{\lambda}}$\begin{arxiv} if and only if $x+\frac12$ slopes upwards in $\conj{\lambda}$; if and only if $-x-\frac12$ slopes downwards in $\lambda$;\end{arxiv} if and only if $-1-x\notin B^\lambda$.
\end{proposition}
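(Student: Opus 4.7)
The plan is to argue directly from the Russian-notation definition of the beta-set (Definition \ref{REF:beta-value-set-of-partition}), exploiting the fact that conjugation of partitions is simply a geometric reflection. Recall that $B^\lambda$ records those $x\in\ZZ$ for which the rim edge centered at $x+\tfrac12$ slopes upward in Figure \ref{FIG:johnson-russian-notation}, so everything reduces to tracking one rim edge at a time.

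First I would observe that the Russian-notation diagram of $\conj{\lambda}$ is obtained from that of $\lambda$ by reflection across the vertical axis (through the origin). This is just a rephrasing of the usual $(r,c)\mapsto(c,r)$ diagonal reflection of $[\lambda]$: in the rotated picture of Figure \ref{FIG:johnson-russian-notation} the diagonal $r=c$ becomes the vertical axis. Under this axial reflection, a rim edge centered at position $x+\tfrac12$ maps to a rim edge centered at position $-(x+\tfrac12) = (-1-x)+\tfrac12$, and the direction is swapped (upward-sloping becomes downward-sloping and vice versa). Finally I would chain these equivalences: $x\in B^{\conj{\lambda}}$ iff the rim edge at $x+\tfrac12$ slopes upward in $\conj{\lambda}$, iff the rim edge at $(-1-x)+\tfrac12$ slopes downward in $\lambda$, iff $-1-x\notin B^\lambda$. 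This yields the stated equivalence.

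The main (minor) obstacle is just justifying cleanly that conjugation of partitions corresponds to reflection across the vertical axis in the Russian picture. I would handle this by noting that the diagram $[\lambda]$ and its reflection $[\conj{\lambda}]$ across the diagonal $r=c$ share the same rim as a set, but with orientations of the edges swapped; in the rotated (Russian) coordinates the diagonal becomes the $y$-axis, which is exactly the claimed reflection.

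As an alternative, one could use the common equivalent definition $B^\lambda = \{\lambda_i - i\}_{i\ge 1}$ and verify that $\ZZ$ is the disjoint union of $B^\lambda$ and $\{-1-(\conj{\lambda}_j - j) : j\ge 1\} = -1 - B^{\conj{\lambda}}$; this is essentially the standard bijection between ``arm lengths in row $j$ of $\conj{\lambda}$'' and ``missing first-column hook lengths of $\lambda$.'' Either route gives the proposition, but the geometric reflection argument above is the most transparent given the conventions already adopted.
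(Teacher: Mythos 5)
Your proof is correct and matches the paper's own argument: the paper proves this proposition precisely via the chain of equivalences embedded in its statement (rim edge at $x+\frac12$ slopes upwards in $\conj{\lambda}$ iff the edge at $-x-\frac12$ slopes downwards in $\lambda$), which rests on exactly your observation that conjugation is reflection of the Russian-notation diagram of Figure \ref{FIG:johnson-russian-notation} across the vertical axis, swapping edge positions $x+\frac12 \leftrightarrow (-1-x)+\frac12$ and slopes. Your alternative via $B^\lambda = \{\lambda_i - i\}_{i\ge1}$ and the complementary-set identity is also valid, but the geometric route is the one the paper takes.
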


\begin{proposition}[Extension of Fayers {\cite[Lemma 4.6]{Fayers}}]
\label{PROP:conjugate-charge}

Fix $s\ge1$ and $i\in\ZZ/s\ZZ$. Then $c_{s,i}(\conj{\lambda}) = -c_{s,-1-i}(\lambda)$ holds for all partitions $\lambda$.
\end{proposition}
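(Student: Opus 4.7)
The plan is to unpack the definition of $c_{s,i}(\conj\lambda) = c_{s,i}(B^{\conj\lambda})$ directly and then apply the involution $x\mapsto y\colonequals -1-x$ of $\ZZ$, using the preceding Proposition \ref{PROP:conjugate-beta-set} to convert between $B^{\conj\lambda}$ and $B^\lambda$. Explicitly, writing
\[
c_{s,i}(\conj\lambda) = \#[(-1-i+s\ZZ)\cap(\ZZ\setminus B^{\conj\lambda})\cap\{x+\tfrac12<0\}] - \#[(-1-i+s\ZZ)\cap B^{\conj\lambda}\cap\{x+\tfrac12>0\}],
\]
I would substitute $y=-1-x$ in both counted sets and track how each defining condition transforms.

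Under $x\leftrightarrow y=-1-x$, three things happen simultaneously: (i) the congruence $x\equiv -1-i\pmod{s}$ becomes $y\equiv i\pmod s$; (ii) Proposition \ref{PROP:conjugate-beta-set} swaps ``$x\in B^{\conj\lambda}$'' with ``$y\notin B^\lambda$''; and (iii) the half-line condition $x+\tfrac12<0$ becomes $y+\tfrac12>0$, while $x+\tfrac12>0$ becomes $y+\tfrac12<0$. Applying these three swaps to the formula above, the first term becomes $\#[(i+s\ZZ)\cap B^\lambda\cap\{y+\tfrac12>0\}]$ and the second becomes $\#[(i+s\ZZ)\cap(\ZZ\setminus B^\lambda)\cap\{y+\tfrac12<0\}]$; crucially, the ``$B$'' and ``$\ZZ\setminus B$'' roles interchange, and so do the ``positive'' and ``negative'' half-lines.

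Finally, I would compare with the definition of $c_{s,-1-i}(\lambda)$: since $-1-(-1-i)=i$, that quantity equals $\#[(i+s\ZZ)\cap(\ZZ\setminus B^\lambda)\cap\{y+\tfrac12<0\}] - \#[(i+s\ZZ)\cap B^\lambda\cap\{y+\tfrac12>0\}]$, which is exactly the negative of what we computed for $c_{s,i}(\conj\lambda)$. This yields the desired identity. There is essentially no obstacle here; the only care needed is in verifying that the minus sign comes from the swap of the two half-line contributions (not from any miscounted boundary case at $x=0$ or $y=0$), which is automatic since $x+\tfrac12$ is never zero for integer $x$.
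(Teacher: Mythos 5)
Your proposal is correct and is essentially identical to the paper's proof: both substitute Proposition \ref{PROP:conjugate-beta-set} (in the form $\ZZ\setminus B^{\conj{\lambda}} = -1-B^\lambda$) into Definition \ref{REF:c-coordinates-charge-definition} and then apply the involution $x\mapsto -1-x$, which swaps the congruence class $-1-i+s\ZZ$ with $i+s\ZZ$, membership in $B^\lambda$ with its complement, and the two half-lines, producing exactly $-c_{s,-1-i}(\lambda)$. Your version just spells out the three bookkeeping swaps that the paper leaves implicit, and your closing remark that $x+\tfrac12$ never vanishes for integer $x$ correctly disposes of the only conceivable boundary issue.
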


\begin{proof}
Substituting Proposition \ref{PROP:conjugate-beta-set} into Definition \ref{REF:c-coordinates-charge-definition} for $c_{s,i}(\conj{\lambda})$ gives
\[
c_{s,i}(\conj{\lambda})
=
\#[(-1-i+s\ZZ)\cap(-1-B^\lambda)\cap\{x+\tfrac12<0\}]
-
\#[(-1-i+s\ZZ)\cap (\ZZ\setminus -1-B^\lambda)\cap\{x+\tfrac12>0\}].
\]
Applying the involution $x\mapsto -1-x$ recovers the definition of $-c_{s,-1-i}(\lambda)$.
\end{proof}

But the $s$-core operation preserves $s$-charge (see Proposition \ref{REF:s-core-operation-is-well-defined}), so Proposition \ref{PROP:conjugate-charge} yields $c_{s,i}(\conj{\lambda}^s) = c_{s,i}(\conj{\lambda}) = c_{s,-1-i}(\lambda) = c_{s,-1-i}(\lambda^s) = c_{s,i}(\conj{\lambda^s})$. Since $c$-coordinates uniquely parameterize $s$-cores (Proposition \ref{REF:c-coordinates-parameterization-of-s-cores}), we obtain the following result.

\begin{corollary}
\label{COR:conjugation-commutes-with-s-core-operation}

The $s$-core operation commutes with conjugation, i.e. $\conj{\lambda}^s = \conj{\lambda^s}$ for any partition $\lambda$.
\end{corollary}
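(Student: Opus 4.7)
The plan is to reduce the problem to the parameterization by $c$-coordinates given in Corollary \ref{REF:c-coordinates-parameterization-of-s-cores}, since both $\conj{\lambda}^s$ and $\conj{\lambda^s}$ are $s$-cores. By that corollary, it suffices to show $c_{s,i}(\conj{\lambda}^s) = c_{s,i}(\conj{\lambda^s})$ for every $i \in \ZZ/s\ZZ$. Thus I do not need to manipulate Young diagrams directly or keep track of beta-sets; I only need to check that the two natural combinations of ``conjugate'' and ``take $s$-core'' induce the same signed charge $s$-tuple.

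The key ingredients are already at hand: Proposition \ref{REF:s-core-operation-is-well-defined} says the $s$-core operation preserves every charge $c_{s,i}$, and Proposition \ref{PROP:conjugate-charge} says conjugation acts on charges by $c_{s,i}(\conj{\mu}) = -c_{s,-1-i}(\mu)$. Applying these to both sides gives
\[
c_{s,i}(\conj{\lambda}^s) = c_{s,i}(\conj{\lambda}) = -c_{s,-1-i}(\lambda),
\qquad
c_{s,i}(\conj{\lambda^s}) = -c_{s,-1-i}(\lambda^s) = -c_{s,-1-i}(\lambda),
\]
and the two right-hand sides agree. Hence $\conj{\lambda}^s$ and $\conj{\lambda^s}$ have identical $c$-coordinates, so they coincide as $s$-cores.

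There is no real obstacle here beyond bookkeeping: the only mildly delicate point is that the sign in Proposition \ref{PROP:conjugate-charge} and the index flip $i \mapsto -1-i$ appear on both sides of the desired equality, so they cancel against each other. The proof essentially reduces to observing that ``preserves charges'' and ``acts on charges by a fixed involution'' commute in the appropriate sense, which is immediate once the two propositions are in place. If I wanted to avoid $c$-coordinates entirely, I could instead work directly with beta-sets and check that the involution $x \mapsto -1-x$ from Proposition \ref{PROP:conjugate-beta-set} intertwines the $s$-push operation of Definition \ref{REF:s-push-of-beta-set} on the residue classes $i + s\ZZ$ and $-1-i + s\ZZ$, but the charge-based argument is shorter and more transparent.
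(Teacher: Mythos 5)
Your proof is correct and is essentially identical to the paper's own argument, which likewise chains Proposition \ref{REF:s-core-operation-is-well-defined} (the $s$-core operation preserves charge) with Proposition \ref{PROP:conjugate-charge} and then invokes the unique $c$-coordinate parameterization of $s$-cores from Corollary \ref{REF:c-coordinates-parameterization-of-s-cores}. If anything, your version is slightly more careful: you keep the sign $-c_{s,-1-i}$ from Proposition \ref{PROP:conjugate-charge} explicit on both sides (the paper's displayed chain silently drops it), and you correctly note that it cancels.
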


The $c$-to-$a$ translation of Section \ref{SEC:s-sets-and-a-coordinates-vs-c-coordinates} gives another corollary of Proposition \ref{PROP:conjugate-charge}.

\begin{corollary}[{\cite[proof of Lemma 4.6]{Fayers}}]
\label{COR:conjugate-s-set}

Fix $\lambda\in \mcal{C}_s$. Then $\conj{\lambda}\in \mcal{C}_s$ as well, and $a_{s,i}(\conj{\lambda}) = s-1-a_{s,-1-i}(\lambda)$ for all $i\in\ZZ/s\ZZ$, or equivalently $\mcal{S}_s(\conj{\lambda}) = s-1 - \mcal{S}_s(\lambda)$. In particular, $\lambda\in \mcal{D}_s$ if and only if $\mcal{S}_s(\lambda) = s-1-\mcal{S}_s(\lambda)$, i.e. $\mcal{S}_s$ is a \emph{symmetric} $s$-set in Fayers' terminology.
\end{corollary}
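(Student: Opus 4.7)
The plan is to combine the charge-conjugation identity of Proposition \ref{PROP:conjugate-charge} with the $c$-to-$a$ dictionary from Section \ref{SEC:s-sets-and-a-coordinates-vs-c-coordinates}, namely $a_{s,i} = i - sc_{s,-1-i}$ for $0\le i\le s-1$. First I would note that $\conj{\lambda}\in \mcal{C}_s$ is automatic: either invoke invariance of hook lengths under conjugation (stated in the introduction), or use Corollary \ref{COR:conjugation-commutes-with-s-core-operation}, which gives $\conj{\lambda} = \conj{\lambda^s} = \conj{\lambda}^s$, hence $\conj{\lambda}\in \mcal{C}_s$.

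Next I would compute $a_{s,i}(\conj{\lambda})$ for the standard representative $i\in\{0,\ldots,s-1\}$. Applying the dictionary to $\conj{\lambda}$ and then Proposition \ref{PROP:conjugate-charge}:
\[
a_{s,i}(\conj{\lambda}) = i - s\, c_{s,-1-i}(\conj{\lambda}) = i + s\, c_{s,i}(\lambda),
\]
since $c_{s,-1-(-1-i)} = c_{s,i}$. Then I would solve the same dictionary applied to $\lambda$ for the charge: setting $j \colonequals s-1-i\in\{0,\ldots,s-1\}$, the formula $a_{s,j}(\lambda) = j - s\,c_{s,-1-j}(\lambda)$ with $-1-j \equiv i \pmod{s}$ yields $c_{s,i}(\lambda) = \tfrac{1}{s}\bigl((s-1-i) - a_{s,s-1-i}(\lambda)\bigr)$. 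Substituting back gives
\[
a_{s,i}(\conj{\lambda}) = i + (s-1-i) - a_{s,s-1-i}(\lambda) = s-1 - a_{s,-1-i}(\lambda),
\]
where the last equality uses that $a_{s,\cdot}$ depends only on its index modulo $s$.

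Ranging over $i\in\ZZ/s\ZZ$ and using bijectivity of $i\mapsto -1-i$ on $\ZZ/s\ZZ$ then gives the set-theoretic reformulation $\mcal{S}_s(\conj{\lambda}) = s-1 - \mcal{S}_s(\lambda)$. For the final characterization of $\mcal{D}_s$, I would invoke the fact (Proposition \ref{REF:s-sets}) that an $s$-core is uniquely determined by its $s$-set: then $\lambda = \conj{\lambda}$ is equivalent to $\mcal{S}_s(\lambda) = \mcal{S}_s(\conj{\lambda}) = s-1-\mcal{S}_s(\lambda)$.

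I do not anticipate a genuine obstacle here, as everything reduces to bookkeeping; the only mild subtlety is keeping track of the shift by $s$ when passing $-1-i$ from $\{-1,\ldots,-s\}$ back to the standard representatives $\{0,\ldots,s-1\}$, which is precisely what produces the $s-1$ (rather than $-1$) in the final formula.
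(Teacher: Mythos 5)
Your proof is correct and is precisely the paper's intended argument: the paper proves this corollary in one line by combining Proposition \ref{PROP:conjugate-charge} with the $c$-to-$a$ dictionary $a_{s,i} = i - sc_{s,-1-i}$ of Section \ref{SEC:s-sets-and-a-coordinates-vs-c-coordinates}, which is exactly what you do, with the index bookkeeping (including the shift producing $s-1$ rather than $-1$) handled correctly. The only difference is cosmetic: the paper also records, as a side remark, Fayers' alternative direct computation with beta-sets $B^{\conj{\lambda}} = -1-(\ZZ\setminus B^\lambda)$, but your charge-based route is the one the paper actually takes.
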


\begin{markednewpar}\leavevmode\end{markednewpar}\begin{arxiv}
\begin{sidermk}
Fayers gives a more direct proof of this fact in {\cite[proof of Lemma 4.6]{Fayers}}. If $\lambda$ is an $s$-core, then (suppressing notation for clarity)
\[ B^\lambda = \bigcup_{i\in\ZZ/s\ZZ}\{a_i - s, a_i - 2s, a_i - 3s,\ldots\} = \ZZ\setminus \bigcup_{i\in\ZZ/s\ZZ}\{a_i, a_i + s, a_i + 2s,\ldots\}, \]
so for the conjugate $\conj{\lambda}$ we have (again suppressing notation for clarity)
\begin{align*}
B^{\conj{\lambda}}
&= \bigcup_{i\in\ZZ/s\ZZ}\{-1-a_i, -1-a_i-s, -1-a_i-2s, \ldots\} \\
&= \bigcup_{i\in\ZZ/s\ZZ}\{-1-a_{-1-i}, -1-a_{-1-i}-s, -1-a_{-1-i}-2s, \ldots\}.
\end{align*}
Thus $\conj{\lambda}$ is also an $s$-core (which also follows from the original hook length definition), with $a_{s,i}(\conj{\lambda}) = s-1-a_{s,-1-i}(\lambda)$ for all $i\in\ZZ/s\ZZ$.
\end{sidermk}
\end{arxiv}

\subsection{Background: \texorpdfstring{$t$}{t}-cores of self-conjugate \texorpdfstring{$s$}{s}-cores}
\label{SEC:t-core-of-self-conjugate-s-core-exposition}

\subsubsection{Comparing \texorpdfstring{$t$}{t}-cores of different self-conjugate \texorpdfstring{$s$}{s}-cores}

The only necessary modification from the general case is the following self-conjugate version of Proposition \ref{PROP:extending-Olsson-theorem}.

\begin{proposition}[Extension of Proposition \ref{PROP:extending-Olsson-theorem}]
\label{PROP:extending-Olsson-theorem-self-conjugate}

Fix \emph{any} $s,t\ge1$ and $\lambda\in \mcal{D}_s$. Then $\lambda^t\in \mcal{D}_s\cap \mcal{D}_t$, and furthermore $\mcal{S}_s(\lambda^t) \equiv \mcal{S}_s(\lambda) \pmod{t}$ (viewed as multisets of residues).
\end{proposition}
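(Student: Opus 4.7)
The plan is to reduce this almost entirely to the already established Proposition \ref{PROP:extending-Olsson-theorem}, together with the commutation of conjugation and the $t$-core operation recorded in Corollary \ref{COR:conjugation-commutes-with-s-core-operation}.

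First I would invoke Proposition \ref{PROP:extending-Olsson-theorem} directly on $\lambda\in\mcal{D}_s\subseteq \mcal{C}_s$ to conclude that $\lambda^t\in \mcal{C}_s\cap \mcal{C}_t$ and that $\mcal{S}_s(\lambda^t)\equiv \mcal{S}_s(\lambda)\pmod{t}$ as multisets of residues. This already takes care of the congruence statement and of membership in $\mcal{C}_s\cap \mcal{C}_t$; self-conjugacy of $\lambda$ plays no role in the argument there and so gives nothing extra at this stage.

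The only new content is therefore upgrading $\lambda^t\in \mcal{C}_t$ to $\lambda^t\in \mcal{D}_t$, i.e.\ showing that $\lambda^t$ is itself self-conjugate. For this I would apply Corollary \ref{COR:conjugation-commutes-with-s-core-operation} \emph{with $s$ replaced by $t$}: the corollary asserts that the $r$-core operation commutes with conjugation for every $r\ge1$, so in particular $\conj{\mu}^t=\conj{\mu^t}$ for any partition $\mu$. Specializing to $\mu\colonequals \lambda$ and using the hypothesis $\lambda=\conj{\lambda}$ gives
\[
\lambda^t \;=\; \conj{\lambda}^{\,t} \;=\; \conj{\lambda^t},
\]
so $\lambda^t$ is self-conjugate. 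Combined with $\lambda^t\in \mcal{C}_s\cap \mcal{C}_t$ from the first step, this places $\lambda^t$ in $\mcal{D}_s\cap \mcal{D}_t$, as desired.

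There is essentially no obstacle here; the proof is a two-line appeal, and the only thing to notice is that Corollary \ref{COR:conjugation-commutes-with-s-core-operation}, although phrased with generic modulus $s$, can of course be applied with modulus $t$. The substantive work (the congruence of $s$-sets modulo $t$) is entirely inherited from Proposition \ref{PROP:extending-Olsson-theorem}, so no new beta-set analysis or induction is needed.
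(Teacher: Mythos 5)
Your proposal is correct and is essentially identical to the paper's own proof: the paper likewise notes that Proposition \ref{PROP:extending-Olsson-theorem} handles everything except self-conjugacy of $\lambda^t$, and then uses $\lambda = \conj{\lambda}$ together with Corollary \ref{COR:conjugation-commutes-with-s-core-operation} (applied with modulus $t$) to get $\lambda^t = \conj{\lambda}^t = \conj{\lambda^t}$. Your observation that the corollary, though stated with generic modulus $s$, applies with modulus $t$ is exactly the implicit step in the paper's argument as well.
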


\begin{proof}[Direct alternative proof]
Proposition \ref{PROP:extending-Olsson-theorem} already shows everything except that $\lambda^t$ is self-conjugate. Strictly speaking, \cite[Proposition 4.7]{Fayers} does prove this indirectly, but it is cleaner to use $\lambda = \conj{\lambda}$ and Corollary \ref{COR:conjugation-commutes-with-s-core-operation} to get $\lambda^t = \conj{\lambda}^t = \conj{\lambda^t}$.
\end{proof}

\subsubsection{Self-conjugate level \texorpdfstring{$t$}{t} action}

We now present a self-conjugate analog of the group $G_{s,t}$ from Definition \ref{DEF:level-t-group-action}.

\begin{definition}[c.f. {\cite[Section 4.1]{Fayers}}]
\label{DEF:level-t-group-action-self-conjugate}

Fix coprime $s,t\ge1$. Let $H_{s,t}$ be the subgroup of $G_{s,t}$ consisting of $f\in G_{s,t}$ such that $f(-1-m) = -1-f(m)$ for all $m\in\ZZ$ (based on the involution $m\mapsto -1-m$).

It is easy to check that $H_{s,t}$ has a group structure, and that it acts in the obvious way on the set of \emph{symmetric} $s$-sets $\mcal{S}_s(\lambda)$, or equivalently suitable beta-sets. This induces an action on $\mcal{D}_s$, via Corollary \ref{COR:conjugate-s-set}.
\end{definition}

\begin{remark}[Different but equivalent definitions]
\label{RMK:difference-of-group-definitions-self-conjugate}

We have not given Fayers' actual definition of the `level $t$ action of the $s$-affine hyperoctahedral group', but rather one equivalent by \cite[Propositions 4.1 and 4.2]{Fayers}, and easier to work with for our purposes.
\end{remark}

\begin{remark}[Natural action?]
It is not hard to show that $H_{s,t}$ is the subset of elements $f\in G_{s,t}$ that preserve $\mcal{D}_s$.\begin{arxiv} Indeed, suppose $f\mcal{D}_s \subseteq \mcal{D}_s$. We want to prove $f(s-1-m) = s-1-f(m)$ for every $m\in\{0,1,\ldots,s-1\}$.

\begin{itemize}
\item If $m \ne s-1-m$, then for all positive integers $N$, consider the action of $f$ on the symmetric $s$-set $\{m+Ns,s-1-m-Ns\}\cup \{0,1,\ldots,s-1\}\setminus\{m,s-1-m\}$. If $N$ is sufficiently large, then the only way the image can be symmetric is for $f(m)+Ns$ to pair up with $f(s-1-m)-Ns$ to sum to $s-1$, so that $f(m)+f(s-1-m) = s-1$.

\item If $m = s-1-m$, i.e. $s$ is odd and $m = \frac{s-1}{2}$, then by the previous point, we must have $f(n) = s-1-f(s-1-n)$ for all $n\in\{0,1,\ldots,s-1\}\setminus\{\frac{s-1}{2}\}$. But then $f(m) = \binom{s}{2} - \sum_{n\ne m} f(n) = \frac{s-1}{2}$, so that $f(m)+f(s-1-m) = 2f(m) = s-1$.
\end{itemize}
\end{arxiv}
\end{remark}

We have following self-conjugate analog of Proposition \ref{PROP:extending-bijections-of-s-sets-to-group-actions}, with analogous proof.

\begin{proposition}[c.f. {\cite[proof of Proposition 4.9]{Fayers}}]
\label{PROP:extending-bijections-of-s-sets-to-group-actions-self-conjugate}

Let $\lambda,\mu$ be self-conjugate $s$-cores, and $\phi\colon \mcal{S}_s(\lambda)\to \mcal{S}_s(\mu)$ a bijection (of \emph{symmetric} $s$-sets) that preserves residue classes modulo $t$, satisfying $\phi(m)+\phi(s-1-m) = s-1$ for all $m\in \mcal{S}_s(\lambda)$. Then $\phi$ uniquely extends to an element $f\in H_{s,t}$ (under Definition \ref{DEF:level-t-group-action-self-conjugate}).
\end{proposition}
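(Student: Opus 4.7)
The plan is to leverage the non-self-conjugate version, namely Proposition \ref{PROP:extending-bijections-of-s-sets-to-group-actions}, to produce a unique extension $f \in G_{s,t}$, and then verify that the extra hypothesis $\phi(m)+\phi(s-1-m)=s-1$ forces $f$ to satisfy the involution compatibility $f(-1-m) = -1 - f(m)$ for all $m \in \ZZ$, placing $f$ in $H_{s,t}$. Uniqueness inside $H_{s,t} \subseteq G_{s,t}$ is then automatic from the uniqueness already established in the ambient group.

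The first step is an immediate application of Proposition \ref{PROP:extending-bijections-of-s-sets-to-group-actions}: since $\phi$ is a bijection of $s$-sets preserving residues modulo $t$, we obtain the unique $f \in G_{s,t}$ extending $\phi$. Recall the explicit formula used in that earlier proof: by $s$-periodicity, $f(m) = m + [\phi(a_{s,m}(\lambda)) - a_{s,m}(\lambda)]$ for every $m \in \ZZ$, where $a_{s,m}(\lambda)$ is the unique element of $\mcal{S}_s(\lambda)$ in the residue class $m + s\ZZ$.

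Next I would verify the involution identity. Because $\lambda$ is self-conjugate, Corollary \ref{COR:conjugate-s-set} gives $a_{s,-1-m}(\lambda) = s-1-a_{s,m}(\lambda)$ for all $m \in \ZZ$. Substituting into the explicit formula yields
\[
f(-1-m) = -1 - m + \bigl[\phi(s-1-a_{s,m}(\lambda)) - (s-1-a_{s,m}(\lambda))\bigr].
\]
Applying the hypothesis $\phi(x) + \phi(s-1-x) = s-1$ with $x = a_{s,m}(\lambda) \in \mcal{S}_s(\lambda)$ to rewrite $\phi(s-1-a_{s,m}(\lambda)) = s-1-\phi(a_{s,m}(\lambda))$, and then simplifying, reduces the right-hand side to $-1-m - [\phi(a_{s,m}(\lambda)) - a_{s,m}(\lambda)] = -1 - f(m)$, as required.

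The main thing to watch is the bookkeeping between the self-conjugacy of $\lambda$ (which, via Corollary \ref{COR:conjugate-s-set}, translates the combinatorial involution on indices $m \mapsto -1-m$ into the arithmetic involution $x \mapsto s-1-x$ on elements of $\mcal{S}_s(\lambda)$) and the hypothesis imposed on $\phi$; once these are aligned, the computation above is a one-line substitution. No new technology beyond Proposition \ref{PROP:extending-bijections-of-s-sets-to-group-actions} and Corollary \ref{COR:conjugate-s-set} is required, and in fact the argument can be read as a direct analog of the earlier extension proof, with the $s$-periodicity condition now paired with the $m \mapsto -1-m$ symmetry.
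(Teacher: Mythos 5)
Your proposal is correct and takes essentially the same route as the paper's own proof: both first invoke Proposition \ref{PROP:extending-bijections-of-s-sets-to-group-actions} to obtain the unique extension $f\in G_{s,t}$, and then use $s$-periodicity (together with the symmetry $\mcal{S}_s(\lambda) = s-1-\mcal{S}_s(\lambda)$ from Corollary \ref{COR:conjugate-s-set}) to propagate the hypothesis $\phi(m)+\phi(s-1-m)=s-1$ from the $s$-set to all of $\ZZ$, yielding $f(-1-m)=-1-f(m)$, with uniqueness in $H_{s,t}$ inherited from uniqueness in $G_{s,t}$. The only difference is cosmetic: you run the verification through the explicit formula $f(m) = m + [\phi(a_{s,m}(\lambda)) - a_{s,m}(\lambda)]$, whereas the paper states the same reduction more tersely.
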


\begin{markednewpar}\leavevmode\end{markednewpar}\begin{arxiv}
\begin{proof}
By Proposition \ref{PROP:extending-bijections-of-s-sets-to-group-actions}, the permutation $\phi$ extends to a unique element $f\colon\ZZ\to\ZZ$ of $G_{s,t}$. We need to show $f(m)+f(-1-m) = -1$ for all integers $m$, or equivalently (by $s$-periodicity in $G_{s,t}$) $f(m)+f(s-1-m) = s-1$ for all integers $m$. This follows (by $s$-periodicity) from our assumption that $\phi(m)+\phi(s-1-m) = s-1$ for all $m$ in the symmetric $s$-set $\mcal{S}_s(\lambda)$.
\end{proof}
\end{arxiv}

We can now state a self-conjugate analog of Corollary \ref{COR:level-t-group-action-orbits}.

\begin{corollary}[{\cite[Corollary 4.8]{Fayers}}]
\label{COR:level-t-group-action-orbits-self-conjugate}

Fix coprime $s,t\ge1$. Then $\lambda,\mu\in \mcal{D}_s$ lie in the same $H_{s,t}$-orbit if and only if $\lambda^t = \mu^t$. In other words, each $H_{s,t}$-orbit of $\mcal{D}_s$ contains a unique $t$-core (hence self-conjugate $(s,t)$-core), and any $\lambda\in \mcal{D}_s$ lies in $H_{s,t}\lambda^t$.
\end{corollary}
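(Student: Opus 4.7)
The plan is to follow the template of the direct proof of the general analog Corollary \ref{COR:level-t-group-action-orbits}, but with two self-conjugate refinements: Proposition \ref{PROP:extending-Olsson-theorem-self-conjugate} in place of its general counterpart, and Proposition \ref{PROP:extending-bijections-of-s-sets-to-group-actions-self-conjugate} in place of Proposition \ref{PROP:extending-bijections-of-s-sets-to-group-actions}. The forward direction ``$\mu \in H_{s,t}\lambda$ implies $\lambda^t = \mu^t$'' is immediate from the inclusion $H_{s,t} \subseteq G_{s,t}$ combined with Corollary \ref{COR:level-t-group-action-orbits}.

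For the reverse direction, I would start from $\lambda,\mu \in \mcal{D}_s$ with $\lambda^t = \mu^t$, apply Proposition \ref{PROP:faithful-invariant} to obtain the multiset congruence $\mcal{S}_s(\lambda) \equiv \mcal{S}_s(\mu) \pmod{t}$, and then construct a bijection $\phi \colon \mcal{S}_s(\lambda) \to \mcal{S}_s(\mu)$ that both preserves residues modulo $t$ \emph{and} satisfies $\phi(m)+\phi(s-1-m) = s-1$ for every $m \in \mcal{S}_s(\lambda)$. Proposition \ref{PROP:extending-bijections-of-s-sets-to-group-actions-self-conjugate} would then promote $\phi$ to an element $f \in H_{s,t}$ whose action sends $\mcal{S}_s(\lambda)$ to $\mcal{S}_s(\mu)$, and hence $\lambda$ to $\mu$, establishing $\mu \in H_{s,t}\lambda$.

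The main obstacle is producing the doubly-constrained bijection $\phi$; I expect to handle this by partitioning both symmetric $s$-sets by residue modulo $t$ and analyzing the interaction with the involution $\sigma \colon m \mapsto s-1-m$ case by case. When $r \not\equiv s-1-r \pmod{t}$, the involution $\sigma$ swaps the residue-$r$ and residue-$(s-1-r)$ parts of each symmetric $s$-set, so I would biject the residue-$r$ elements of $\mcal{S}_s(\lambda)$ to those of $\mcal{S}_s(\mu)$ arbitrarily (using the matched counts from Proposition \ref{PROP:faithful-invariant}) and then extend to residue $s-1-r$ by applying $\sigma$ on both sides. When $r \equiv s-1-r \pmod{t}$, the involution $\sigma$ acts within $\mcal{S}_s(\lambda) \cap (r+t\ZZ)$, and I would match $\sigma$-orbits between $\mcal{S}_s(\lambda)$ and $\mcal{S}_s(\mu)$; the one subtlety is that if $s$ is odd and $r \equiv \frac{s-1}{2} \pmod{t}$, then the fixed point $\frac{s-1}{2}$ is forced (by self-conjugacy and Corollary \ref{COR:conjugate-s-set}) to lie in every symmetric $s$-set, and must be sent to itself, after which the remaining residue-$r$ elements pair up under $\sigma$ on both sides.

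Finally, the reformulation ``each $H_{s,t}$-orbit of $\mcal{D}_s$ contains a unique $t$-core, and $\lambda$ lies in $H_{s,t}\lambda^t$'' follows by specializing the equivalence to $\mu \colonequals \lambda^t$, using Proposition \ref{PROP:extending-Olsson-theorem-self-conjugate} to guarantee $\lambda^t \in \mcal{D}_s \cap \mcal{D}_t$ (so that it is a legitimate element of $\mcal{D}_s$ in the same $H_{s,t}$-orbit as $\lambda$, and moreover a self-conjugate $(s,t)$-core).
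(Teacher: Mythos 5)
Your proposal is correct and takes essentially the same route as the paper's own direct proof: apply Proposition \ref{PROP:faithful-invariant} to get a residue-preserving bijection of symmetric $s$-sets, refine it to satisfy $\phi(m)+\phi(s-1-m)=s-1$, promote it via Proposition \ref{PROP:extending-bijections-of-s-sets-to-group-actions-self-conjugate}, and specialize to $\mu\colonequals\lambda^t$ using Proposition \ref{PROP:extending-Olsson-theorem-self-conjugate}. Your orbit-by-orbit construction of the doubly-constrained bijection (including the forced fixed point $\frac{s-1}{2}$ when $s$ is odd) correctly supplies the one step the paper merely asserts with ``we may further specify this bijection.''
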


\begin{proof}[Sketch of more direct proof]

Fix $\lambda,\mu\in \mcal{D}_s$. Proposition \ref{PROP:faithful-invariant} followed by Proposition \ref{PROP:extending-bijections-of-s-sets-to-group-actions-self-conjugate} gives equivalence of $\lambda^t = \mu^t$ and $\mu \in H_{s,t}\lambda$. The re-phrasing follows by specializing to $\mu \colonequals \lambda^t$ (where $\lambda^t \in \mcal{D}_s$ follows from Proposition \ref{PROP:extending-Olsson-theorem-self-conjugate}).
\end{proof}

\begin{markednewpar}\leavevmode\end{markednewpar}\begin{arxiv}
\begin{proof}[Full direct proof]

Let $\lambda,\mu$ be two self-conjugate $s$-cores. Proposition \ref{PROP:faithful-invariant} states that $\lambda^t = \mu^t$ if and only if we have a congruence of (symmetric) $s$-sets $\mcal{S}_s(\lambda) \equiv \mcal{S}_s(\mu)\pmod{t}$, i.e. there exists a bijection $\phi\colon \mcal{S}_s(\lambda) \to \mcal{S}_s(\mu)$ (of \emph{symmetric} $s$-sets) preserving residues modulo $t$. Since $\mcal{S}_s(\lambda)$ and $\mcal{S}_s(\mu)$ are \emph{symmetric} $s$-sets, we may further specify this bijection $\phi$ to satisfy $\phi(m)+\phi(s-1-m) = s-1$ for all $m\in \mcal{S}_s(\lambda)$. By Proposition \ref{PROP:extending-bijections-of-s-sets-to-group-actions-self-conjugate}, such bijections $\phi$ correspond to group elements $f\in H_{s,t}$ with restriction $f|_{\mcal{S}_s(\lambda)} = \phi$. So $\lambda^t = \mu^t$ if and only if $\mu = f\lambda$ for some $f\in H_{s,t}$, i.e. $\lambda,\mu$ lie in the same $H_{s,t}$-orbit.

Finally, for any self-conjugate $s$-core $\lambda\in \mcal{D}_s$, we have $\lambda^t\in G_{s,t}\lambda$ by specializing the previous paragraph to $\mu\colonequals \lambda^t\in \mcal{D}_s$.
\end{proof}
\end{arxiv}

Definition \ref{DEF:level-t-group-action-self-conjugate} and Corollary \ref{COR:level-t-group-action-orbits-self-conjugate} provide the background and context for Theorem \ref{THM:self-conjugate-Fayers} (stated in the introduction).

\subsection{Key inputs for computation}
\label{SEC:computation-inputs-self-conjugate}

In this section, we describe all the computational methods and results used to compute the sums in Theorems \ref{THM:self-conjugate-Armstrong} and \ref{THM:self-conjugate-Fayers}, roughly following the structure of Section \ref{SEC:computation-inputs}.

\subsubsection{Johnson's \texorpdfstring{$u$}{u}-coordinates versus Fayers' \texorpdfstring{$t$}{t}-sets}
\label{SEC:z-coordinates-intro-self-conjugate}

We will use the $z$-coordinates again (and the closely related $u$-coordinates) to get a simple expression (Proposition \ref{PROP:stab-computation-self-conjugate}) for the $H_{s,t}$-stabilizers. Again, we not only review Johnson's $(s,t)$-core parameterization, but also give a modest extension to general $t$-cores.

\begin{proposition}[c.f. {\cite[Section 3.3]{Johnson}}: Johnson's $u$-coordinates versus Fayers' $t$-sets]
\label{PROP:change-to-z-coordinates-self-conjugate}

Fix coprime $s,t\ge1$. For convenience, let $s' = \floor{s/2}$ and $t' = \floor{t/2}$. The set $\mcal{D}_s\cap\mcal{D}_t$ of self-conjugate $(s,t)$-cores (viewed as self-conjugate $s$-cores within the set of $t$-cores) is parameterized by either of the sets $B_t(s),U_t(s)$, described as follows.
\begin{itemize}
\item By Lemma \ref{LEM:c,a-coordinate-parameterization-s,t-cores}, the set of $t$-sets $\mcal{S}_t(\lambda)$ of self-conjugate $(s,t)$-cores $\lambda$, i.e. the subset $B_t(s)$ of points $(a_{t,i})_{i\in\ZZ/t\ZZ}\in A_t(s)$ (specializing $a$-coordinates from Proposition \ref{PROP:change-to-z-coordinates}) satisfying the additional symmetry $a_{t,i} + a_{t,-1-i} = t-1$ from Corollary \ref{COR:conjugate-s-set}---this is \cite[Lemma 4.6]{Fayers}.

\item The subset $U_t(s)$ of points $(z_{t,i})_{i\in\ZZ/t\ZZ} \in \map{TD}_t(s)$ (specializing $z$-coordinates from Proposition \ref{PROP:change-to-z-coordinates}) satisfying the additional symmetry $z_{t,i} = z_{t,-i}$---this is \cite[Lemma 3.9]{Johnson}.

\item In fact, as implicitly used in \cite[proof of Lemma 3.10]{Johnson}, the set $U_t(s)$ is canonically isomorphic to the set of lattice points $\{(u_0,u_1,\ldots,u_{t'})\in \ZZ_{\ge0}^{1+t'}: u_0+u_1+\cdots+u_{t'} = s'\}$, with the isomorphism defined by $u_0 \colonequals \floor{z_{t,0}/2} = s' - \sum_{i=1}^{t'} u_i$ and the following additional relations: $u_i \colonequals z_{t,i}$ (for $1\le i\le t'$) if $t$ is odd; and $u_{t'} \colonequals z_{t,t'}/2$ and $u_i \colonequals z_{t,i}$ (for $1\le i\le t' - 1$) if $t$ is even.
\end{itemize}

The isomorphism between these sets (also preserving the ambient linear and simplex structures) is the same as that described in Proposition \ref{PROP:change-to-z-coordinates}.

Under the same affine change of coordinates, the set $\mcal{D}_t$ of self-conjugate $t$-cores can be parameterized as follows.
\begin{itemize}
\item By Proposition \ref{REF:s-sets}, the set of $t$-sets $\mcal{S}_t(\lambda)$ of self-conjugate $t$-cores $\lambda$, i.e. the subset of points $(a_{t,i})_{i\in\ZZ/t\ZZ}\in \mcal{C}_t$ satisfying the additional symmetry $a_{t,i} + a_{t,-1-i} = t-1$ from Corollary \ref{COR:conjugate-s-set}.

\item In $z$-coordinates, the subset of points $(z_{t,i})_{i\in\ZZ/t\ZZ} \in \mcal{C}_t$ satisfying the additional symmetry $z_{t,i} = z_{t,-i}$.

\item In $u$-coordinates, the set of lattice points $\{(u_0,u_1,\ldots,u_{t'})\in \ZZ^{1+t'}: u_0+u_1+\cdots+u_{t'} = s'\}$, with the isomorphism defined by $u_0 \colonequals \floor{z_{t,0}/2} = s' - \sum_{i=1}^{t'} u_i$ and the following additional relations: $u_i \colonequals z_{t,i}$ (for $1\le i\le t'$) if $t$ is odd; and $u_{t'} \colonequals z_{t,t'}/2$ and $u_i \colonequals z_{t,i}$ (for $1\le i\le t' - 1$) if $t$ is even.
\end{itemize}
\end{proposition}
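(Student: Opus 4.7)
The plan is to build each parameterization on top of Proposition \ref{PROP:change-to-z-coordinates}, adding one layer of self-conjugate structure at a time: first pass from $A_t(s)$ to $B_t(s)$ via Corollary \ref{COR:conjugate-s-set}, then translate the resulting symmetry to $z$-coordinates to obtain $U_t(s)$, and finally collapse the paired coordinates into $u$-coordinates. Throughout, the argument for the larger set $\mcal{D}_t$ will be obtained by dropping the inequality constraints from the $\mcal{D}_s \cap \mcal{D}_t$ argument.

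For the first step, Corollary \ref{COR:conjugate-s-set} (applied with $s$ replaced by $t$) says that a $t$-core $\lambda$ is self-conjugate if and only if $a_{t,i}(\lambda) + a_{t,-1-i}(\lambda) = t-1$ for all $i$. Intersecting this with the $(s,t)$-core inequalities from Lemma \ref{LEM:c,a-coordinate-parameterization-s,t-cores} yields the claimed parameterization of $\mcal{D}_s \cap \mcal{D}_t$ by $B_t(s)$; here I would use $\mcal{D}_s \cap \mcal{D}_t = \mcal{D}_t \cap \mcal{C}_s$, a direct consequence of Corollary \ref{COR:conjugation-commutes-with-s-core-operation}.

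For the second step, I would transfer the symmetry $a_{t,i} + a_{t,-1-i} = t-1$ across the affine isomorphism of Proposition \ref{PROP:change-to-z-coordinates}. The key arithmetic input is $2k = (s+1)(t-1) \equiv -s-1 \pmod{t}$, which gives both $-1-(-sj+k) \equiv s(j+1)+k \pmod{t}$ and $-1-(s(1-j)+k) \equiv sj + k \pmod{t}$. Substituting the conjugation identity $a_{t,-1-i} = t-1-a_{t,i}$ into the two summands of $z_{t,-j} = \frac{1}{t}(a_{t,-sj+k} - [a_{t,s(1-j)+k} - s])$ and using these two congruences will yield $z_{t,-j} = z_{t,j}$. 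The converse follows from bijectivity of the $a$-to-$z$ map. The same computation, applied on the ambient simplex without inequality constraints, handles $\mcal{D}_t$.

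For the third step, the symmetry $z_{t,i} = z_{t,-i}$ reduces the data to $(z_{t,0}, z_{t,1}, \ldots, z_{t,t'})$, and I would translate the sum $\sum_j z_{t,j} = s$ into $\sum_i u_i = s'$. The main obstacle, and the one genuinely subtle point, is the parity bookkeeping for even $t$: one must show that $z_{t,t'}$ is automatically even, so that $u_{t'} \colonequals z_{t,t'}/2$ is a well-defined integer. This will follow by noting that under the symmetry $z_{t,j} = z_{t,-j}$, the sum $\sum_j j z_{t,j}$ telescopes modulo $t = 2t'$ to $t' z_{t,t'}$, so the residue condition $\sum_j j z_{t,j} \equiv 0 \pmod{t}$ from $\map{TD}_t(s)$ forces $2 \mid z_{t,t'}$. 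For odd $t$ the analogous telescoping gives a full multiple of $t$ automatically, so no new constraint appears; in both cases $z_{t,0}$ inherits the parity of $s$, making $u_0 = \floor{z_{t,0}/2} = s' - (u_1 + \cdots + u_{t'})$ a nonnegative integer. The argument for $\mcal{D}_t$ is identical, with the inequalities $z_{t,i} \ge 0$ and $u_i \ge 0$ simply dropped.
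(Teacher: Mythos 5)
Your proposal follows essentially the same route as the paper's proof: impose the self-conjugacy symmetry $a_{t,i}+a_{t,-1-i}=t-1$ from Corollary \ref{COR:conjugate-s-set}, transfer it through the affine isomorphism of Proposition \ref{PROP:change-to-z-coordinates} using $s+2k\equiv -1\pmod{t}$, and then collapse to $u$-coordinates by analyzing the congruence $\sum_j jz_{t,j}\equiv 0\pmod{t}$ on the symmetric locus. Your key parity observations --- that for even $t$ the pairing $z_{t,j}=z_{t,-j}$ reduces this congruence to $t'z_{t,t'}\equiv 0\pmod{2t'}$, forcing $2\mid z_{t,t'}$, while for odd $t$ it is automatically satisfied, and that $z_{t,0}$ inherits the parity of $s$ --- are exactly the paper's $z$-to-$u$ argument, including the treatment of $\mcal{D}_t$ by dropping the inequality constraints. (Your appeal to Corollary \ref{COR:conjugation-commutes-with-s-core-operation} for $\mcal{D}_s\cap\mcal{D}_t=\mcal{D}_t\cap\mcal{C}_s$ is unnecessary --- this is immediate from $\mcal{D}_n=\mcal{C}_n\cap\{\lambda=\conj{\lambda}\}$ --- but harmless.)

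The one step that does not stand as written is ``the converse follows from bijectivity of the $a$-to-$z$ map.'' Knowing only that the forward map $\phi$ sends the $a$-symmetric locus \emph{into} the $z$-symmetric locus, bijectivity of $\phi$ on the ambient hyperplanes does not by itself give that $\phi^{-1}$ sends $z$-symmetric points back to $a$-symmetric ones: the image of the symmetric locus could a priori be a proper subset. There are easy repairs. Your own substitution computation, carried out \emph{without} assuming $a_{t,-1-i}=t-1-a_{t,i}$, in fact shows that $\phi$ intertwines the involutions $(x_i)\mapsto(t-1-x_{-1-i})$ and $(y_j)\mapsto(y_{-j})$, and intertwined involutions have corresponding fixed loci under a bijection; alternatively a dimension count of the two symmetric affine subspaces works. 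The paper instead proves the equivalence directly in one pass: it computes
\[
t\,(z_{t,i}-z_{t,-i}) \;=\; \bigl(a_{t,si+k}+a_{t,-si+s+k}\bigr)-\bigl(a_{t,si+s+k}+a_{t,-si+k}\bigr),
\]
notes that both index pairs sum to $s+2k\equiv-1\pmod{t}$, and concludes that $z$-symmetry holds if and only if $w_i\colonequals a_{t,i}+a_{t,-1-i}$ is constant along the $s$-shift orbit --- all of $\ZZ/t\ZZ$ by coprimality --- with the constant forced to be $t-1$ since $\sum_{i\in\ZZ/t\ZZ}w_i=t(t-1)$ on the ambient plane. Incorporating either fix, your argument matches the paper's.
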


\begin{proof}[Proof of $a$-to-$z$ translation]

As usual, let $k = \frac12(s+1)(t-1)$. Define the map $\phi$ as in Proposition \ref{PROP:change-to-z-coordinates}, so for any $i\in\ZZ/t\ZZ$, the difference $t\cdot(y_i - y_{-i})$ evaluates to
\[
(x_{si+k} - x_{si+s+k} + s) - (x_{-si+k} - x_{-si+s+k} + s)
= (x_{si+k} + x_{-si+s+k}) - (x_{si+s+k} + x_{-si+k}).
\]
Observe that $(si+k)+(-si+s+k) = (-si+k)+(si+s+k) = s+2k \equiv -1\pmod{t}$ for all $i\in\ZZ/t\ZZ$. It follows (since $s$ is coprime to $t$) that $y_i = y_{-i}$ for all $i\in\ZZ/t\ZZ$ if and only if $x_i + x_{-1-i}$ is constant over $i\in\ZZ/t\ZZ$; if and only if $x_i + x_{-1-i} = t-1$ for all $i\in\ZZ/t\ZZ$ (because we are working in the plane $\sum_{i\in\ZZ/t\ZZ} (x_i + x_{-1-i}) = t(t-1)$).
\end{proof}

\begin{proof}[Proof of $z$-to-$u$ translation]

If $y_i = y_{-i}$ for all $i\in\ZZ/t\ZZ$, then $0 y_0 = 0$ and $i y_i + (t-i) y_{t-i} \equiv0\pmod{t}$ for all $i\in\ZZ/t\ZZ$.

If $t$ is odd, then we automatically get $\sum_{i\in\ZZ/t\ZZ} i y_i \equiv 0\pmod{t}$, clearly establishing $U_t(s) = \{(u_0,u_1,\ldots,u_{t'})\in \ZZ_{\ge0}^{1+t'}: u_0+u_1+\cdots+u_{t'} = s'\}$ under the canonical map given by $u_i \colonequals z_{t,i}$ for $1\le i\le t'$ and $u_0 \colonequals \floor{z_{t,0}/2} = s' - \sum_{i=1}^{t'} u_i$ (from $\sum_{i\in\ZZ/t\ZZ} z_{t,i} = s$).

If $t$ is even, then the vanishing of $\sum_{i\in\ZZ/t\ZZ} i y_i \equiv (t/2) y_{t/2}\pmod{t}$ modulo $t$ is equivalent to $y_{t'} \equiv 0\pmod{2}$. This establishes the isomorphism $U_t(s) = \{(u_0,u_1,\ldots,u_{t'})\in \ZZ_{\ge0}^{1+t'}: u_0+u_1+\cdots+u_{t'} = s'\}$ under the canonical map given by $u_{t'} = z_{t,t'} / 2$; $u_i \colonequals z_{t,i}$ for $1\le i\le t' - 1$; and and $u_0 \colonequals \floor{z_{t,0}/2} = s' - \sum_{i=1}^{t'} u_i$ (from $\sum_{i\in\ZZ/t\ZZ} z_{t,i} = s$).

The previous two paragraphs prove the result for $\mcal{D}_s\cap\mcal{D}_t$. The same $z$-to-$u$ conversion works for $\mcal{D}_t$ as well, except one ignores the inequality conditions $u_i \ge 0$ (which come from the conditions $z_{t,i} \ge 0$ of $\map{TD}_t(s)$).
\end{proof}

\begin{markednewpar}\leavevmode\end{markednewpar}\begin{arxiv}
\begin{sidermk}

The simplicity of $U_t(s)$ may seem somewhat mysterious, but note that regardless of the choice of the constant $k$ when defining $z$-coordinates, it should still be intuitively clear that some condition like $y_i = y_{\alpha - i}$ should hold identically for some constant $\alpha$ (depending on $k$), in which case (at least for $t$ odd) we have $\sum_{i\in\ZZ/t\ZZ} i y_i \equiv \frac{t}{2} \alpha\sum_{i\in\ZZ/t\ZZ} y_i = \frac{\alpha s t}{2}\pmod{t}$ independent of the lattice point $(y_i)_{i\in\ZZ/t\ZZ}$ (as long as $\sum_{i\in\ZZ/t\ZZ} y_i = s$).
\end{sidermk}

\begin{sidecor}[Weak version of Ford--Mai--Sze's theorem \cite{FMS}]
\label{COR:SIDE:finitely-many-s,t-cores-self-conjugate}

$B_t(s)$ and $U_t(s)$ are discrete bounded sets, hence finite. In particular, there are finitely many self-conjugate $(s,t)$-cores, so the sums in Theorems \ref{THM:self-conjugate-Armstrong} and \ref{THM:self-conjugate-Fayers} are finite and well-defined.
\end{sidecor}

\begin{sidermk}

Ford--Mai--Sze \cite{FMS} actually computes the exact number of self-conjugate $(s,t)$-cores as $\binom{\floor{s/2}+\floor{t/2}}{\floor{s/2}}$, by bijecting to lattice paths. At the beginning of the proof of Theorem \ref{THM:self-conjugate-Armstrong} we implicitly give Johnson's variant using the $u$-coordinates.
\end{sidermk}
\end{arxiv}

\subsubsection{Size of the stabilizer of a self-conjugate \texorpdfstring{$s$}{s}-core}
\label{SEC:stab-computation-self-conjugate}

The main new observation for Theorem \ref{THM:self-conjugate-Fayers} is the following computational simplification of Fayers' formula for the size of the stabilizer of a self-conjugate $s$-core, based on Proposition \ref{PROP:s-set-t-set-interaction}. This is the self-conjugate analog of Corollary \ref{COR:s-set-and-stabilizer-data-in-z-coordinates} to Proposition \ref{PROP:stab-computation}.

\begin{proposition}[c.f. Fayers {\cite[Proposition 4.9]{Fayers}}]
\label{PROP:stab-computation-self-conjugate}

Fix coprime $s,t\ge1$, and $\lambda\in \mcal{D}_s$. For convenience, let $t' = \floor{t/2}$.

\begin{enumerate}
\item If $t$ is odd, then $\card{\stab_{H_{s,t}}(\lambda)}$ equals
\[ 2^{\floor{z_{t,0}(\lambda^t) / 2}} \floor{z_{t,0}(\lambda^t)/2}! \prod_{i=1}^{t'} z_{t,i}(\lambda^t)!
= 2^{u_0(\lambda^t)} \prod_{i=0}^{t'} u_i(\lambda^t)!. \]

\item If $t$ is even (so $s$ is odd), then $\card{\stab_{H_{s,t}}(\lambda)}$ equals
\[ 2^{\floor{z_{t,0}(\lambda^t) / 2}} \floor{z_{t,0}(\lambda^t)/2}! 2^{\floor{z_{t,t'}(\lambda^t) / 2}} \floor{z_{t,t'}(\lambda^t)/2}! \prod_{i=1}^{t' - 1} z_{t,i}(\lambda^t)!
= 2^{u_0(\lambda^t) + u_{t'}(\lambda^t)}  \prod_{i=0}^{t'} u_i(\lambda^t)!. \]
\end{enumerate}

\end{proposition}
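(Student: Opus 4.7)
The plan is to adapt the proof of Proposition \ref{PROP:stab-computation} to the self-conjugate setting. By Proposition \ref{PROP:extending-bijections-of-s-sets-to-group-actions-self-conjugate}, elements of $\stab_{H_{s,t}}(\lambda)$ correspond bijectively to permutations $\phi$ of the symmetric $s$-set $\mcal{S}_s(\lambda)$ that preserve residue classes modulo $t$ and commute with the involution $\iota \colon m \mapsto s - 1 - m$ on $\mcal{S}_s(\lambda)$.

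Recall from Corollary \ref{COR:s-set-and-stabilizer-data-in-z-coordinates} that, with $k = \tfrac{1}{2}(s+1)(t-1)$, the residue class of $\mcal{S}_s(\lambda)$ modulo $t$ indexed by $j \in \ZZ/t\ZZ$ (namely $sj + s + k + t\ZZ$) contains $z_{t,j}(\lambda^t)$ elements. A short direct calculation shows that $\iota$ swaps residue class $j$ with residue class $-j$, the key identity being $2(s+k) \equiv s - 1 \pmod{t}$ (which reduces to $(s+1)t \equiv 0 \pmod{t}$). Thus the fixed residue classes under $\iota$ are $\{0\}$ when $t$ is odd, and $\{0, t'\}$ when $t$ is even (where $t' = t/2$); the remaining classes pair up as $(j, -j)$.

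Any $\phi$ commuting with $\iota$ decomposes over the orbits of this induced action on residue classes. For a pair $(j, -j)$ the bijection $\iota$ identifies the two classes, so $\phi$ on class $-j$ is determined by $\phi$ on class $j$ via conjugation by $\iota$; this contributes $z_{t,j}! = u_j!$ choices. For a fixed class $j_0$ with $n \defeq z_{t,j_0}(\lambda^t)$ elements and $f$ internal $\iota$-fixed points, the standard count of permutations commuting with an involution is $f! \cdot ((n-f)/2)! \cdot 2^{(n-f)/2}$. Since the only $\iota$-fixed point on $\ZZ$ is $m^* = (s-1)/2$ (which exists only when $s$ is odd), and since the sum condition $a_{s,(s-1)/2}(\lambda) + a_{s,(s-1)/2}(\lambda) = s - 1$ on the symmetric $s$-set forces $m^* \in \mcal{S}_s(\lambda)$ in that case, all that remains is to locate $m^*$'s class modulo $t$: the direct computation $m^* - (s + k) = -(s+1)t/2 \equiv 0 \pmod{t}$ places $m^*$ in class $j_0 = 0$, never in class $t'$. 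In every case the parity of $n$ (even when $f = 0$, odd when $f = 1$) collapses the contribution of fixed class $j_0$ to $2^{\floor{n/2}} \floor{n/2}!$.

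Assembling these contributions---one factor $2^{\floor{z_{t,0}/2}} \floor{z_{t,0}/2}!$ from class $0$, an additional factor $2^{\floor{z_{t,t'}/2}} \floor{z_{t,t'}/2}!$ from class $t'$ when $t$ is even, and factors $z_{t,j}!$ from the remaining pairs---yields the two displayed formulas, and Proposition \ref{PROP:change-to-z-coordinates-self-conjugate} converts these into the $u$-coordinate forms. The main obstacle is not a single deep step but the bookkeeping across the four parity cases of $(s,t)$ consistent with coprimality; pinpointing the residue class of $m^*$ via the identity $(s+1)t/2 \equiv 0 \pmod{t}$ is what makes the cases collapse uniformly to the claimed formulas.
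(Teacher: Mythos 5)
Your proof is correct and takes essentially the same route as the paper's: both reduce, via Proposition \ref{PROP:extending-bijections-of-s-sets-to-group-actions-self-conjugate}, to counting residue-preserving permutations of the symmetric $s$-set $\mcal{S}_s(\lambda)$ commuting with the involution $m\mapsto s-1-m$, decompose over the involution's action on the modulo-$t$ residue classes (swapped pairs contributing $z_{t,j}!$, self-paired classes contributing $2^{\floor{n/2}}\floor{n/2}!$), and then translate through Corollary \ref{COR:s-set-and-stabilizer-data-in-z-coordinates} and Proposition \ref{PROP:change-to-z-coordinates-self-conjugate}. The only cosmetic difference is in the bookkeeping: you verify that the involution sends $z$-index $j$ to $-j$ and locate the unique fixed point $m^*=\frac{s-1}{2}$ in class $0$, whereas the paper equivalently identifies the self-paired residues $\frac{s-1}{2}$ (and $\frac{s-1}{2}+\frac{t}{2}$ for even $t$) with the indices $i=0$ (and $i=t'$) and quotes Fayers' contribution formula directly.
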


\begin{proof}

Fix $\lambda\in \mcal{D}_s$. Fayers uses Proposition \ref{PROP:extending-bijections-of-s-sets-to-group-actions-self-conjugate} to compute $\card{\stab_{H_{s,t}}(\lambda)}$ as the number of permutations $\pi$ of the symmetric $s$-set $\mcal{S}_s(\lambda)$ with $\pi(s-1-m) = s-1-\pi(m)$ and $\pi(m)\equiv m\pmod{t}$ for all $m \in \mcal{S}_s(\lambda)$, i.e. the product of the following individual contributions:
\begin{itemize}
\item $\card{\mcal{S}_s(\lambda)\cap (j+t\ZZ)}! = \card{\mcal{S}_s(\lambda)\cap (s-1-j+t\ZZ)}!$ for each \emph{pair} of \emph{distinct} residues $\{j,s-1-j\}\pmod{t}$;

\item $2^{\floor{\frac12\card{\mcal{S}_s(\lambda)\cap (j+t\ZZ)}}} \floor{\frac12\card{\mcal{S}_s(\lambda)\cap (j+t\ZZ)}}!$ for each residue $j\pmod{t}$ with $j \equiv s-1-j\pmod{t}$.
\end{itemize}
Fayers evaluates the product in his own way, but it will be easier for us to directly translate to $z$-coordinates via
\[
z_{t,i}(\lambda^t) = \tfrac{1}{t}(a_{t,si + k}(\lambda^t) - [a_{t,s(i+1) + k}(\lambda^t) - s]) = \card{\mcal{S}_s(\lambda)\cap(si+s+k+t\ZZ)}
\]
from Corollary \ref{COR:s-set-and-stabilizer-data-in-z-coordinates} (where $k = \frac12(s+1)(t-1)$ as usual), and then to $u$-coordinates via Proposition \ref{PROP:change-to-z-coordinates-self-conjugate} (as we have $\lambda^t\in \mcal{D}_s\cap\mcal{D}_t$ by Proposition \ref{PROP:extending-Olsson-theorem-self-conjugate}). Observe that $2s+2k \equiv s-1\pmod{t}$.

\begin{enumerate}
\item If $t$ is odd, then the only residue $j$ with $j \equiv s-1-j\pmod{t}$ is $2^{-1}(s-1)\pmod{t}$. It follows that the residues $si+s+k+t\ZZ$, for $0\le i\le \frac{t-1}{2} = t'$, form a set of representatives of the pairs of residues $\{j,s-1-j\}\pmod{t}$, with $i=0$ corresponding to $j = s+k \equiv 2^{-1}(s-1)\pmod{t}$. Thus $\card{\stab_{H_{s,t}}(\lambda)}$ equals the product $2^{\floor{z_{t,0}(\lambda^t) / 2}} \floor{z_{t,0}(\lambda^t)/2}! \prod_{i=1}^{t'} z_{t,i}(\lambda^t)!$.

\item If $t$ is even (so $s$ is odd), then the residues $j$ with $j \equiv s-1-j\pmod{t}$ are $\frac{s-1}{2}+t\ZZ$ and $\frac{s-1}{2}+\frac{t}{2}+t\ZZ$. It follows that the residues $si+s+k+t\ZZ$, for $0\le i\le \frac{t}{2} = t'$, form a set of representatives of the pairs of residues $\{j,s-1-j\}\pmod{t}$, with $i=0$ corresponding to $j = s+k = s+(t-1)\frac{s+1}{2} \equiv \frac{s-1}{2}\pmod{t}$ and $i = \frac{t}{2} = t'$ corresponding to $j = s\frac{t}{2} + s+k \equiv 1\frac{t}{2}+\frac{s-1}{2} = \frac{s-1}{2}+\frac{t}{2}\pmod{t}$. Thus $\card{\stab_{H_{s,t}}(\lambda)}$ equals the product $2^{\floor{z_{t,0}(\lambda^t) / 2}} \floor{z_{t,0}(\lambda^t)/2}! 2^{\floor{z_{t,t'}(\lambda^t) / 2}} \floor{z_{t,t'}(\lambda^t)/2}! \prod_{i=1}^{t' - 1} z_{t,i}(\lambda^t)!$.
\end{enumerate}
Finally, in the odd $t$ case, we can translate to $u$-coordinates using $u_0 = \floor{z_{t,0}/2}$ and $u_i = z_{t,i}$ for $1\le i\le t'$. In the even case, we instead use $z_{t,t'} = 2u_{t'}$, $u_0 = \floor{z_{t,0}/2}$, and $u_i = z_{t,i}$ for $1\le i\le t'-1$.
\end{proof}

\begin{markednewpar}\leavevmode\end{markednewpar}\begin{arxiv}
\begin{sidermk}
We will only explicitly use this result for self-conjugate $(s,t)$-cores $\lambda\in \mcal{D}_s\cap \mcal{D}_t$, when $s,t$ are coprime, in the proof Theorem \ref{THM:self-conjugate-Fayers}.
\end{sidermk}
\end{arxiv}

\subsubsection{Evaluating quadratic sums in \texorpdfstring{$u$}{u}-coordinates}

We will evaluate several special $u$-coordinate sums in our proofs of Theorems \ref{THM:self-conjugate-Armstrong} and \ref{THM:self-conjugate-Fayers} using the following self-conjugate analog of Corollary \ref{COR:special-cyclic-sums}.

\begin{proposition}
\label{PROP:special-u-coordinate-sums}

Fix coprime $s,t\ge1$. Define $U_t(s)$ as in Proposition \ref{PROP:change-to-z-coordinates-self-conjugate} (we will freely switch between $z$-coordinates and $u$-coordinates parameterizing self-conjugate $(s,t)$-cores as appropriate). For convenience, let $s' = \floor{s/2}$ and $t' = \floor{t/2}$, and let $[s]_2 \colonequals s - 2s'\in\{0,1\}$. Then we evaluate the sum $\sum_{(z_{t,j})\in U_t(s)} f(z_{t,0},\ldots,z_{t,t-1})$ in the cases listed below, where the most important terms have been boxed.

First we look at `modified exponential' cases for \emph{odd} $t$, freely using $z_{t,0} = 2u_0 + [s]_2$.
\begin{enumerate}
\item $(1\cdot 2^{-1} + t'\cdot 1)^{s'} = \fbox{$(\frac{t}{2})^{s'}$}$ when ($t$ is odd and) $f = 2^{-u_0}\binom{\abs{\bd{u}}}{\bd{u}}\cdot \fbox{$1$}$, where $\bd{u} = (u_0,\ldots,u_{t'})$;

\item \fbox{$s'(\frac{t}{2})^{s' - 1}$} when $f = 2^{-u_0}\binom{\abs{\bd{u}}}{\bd{u}}\cdot \fbox{$u_i$}$ for some $1\le i\le t'$;

\item \fbox{$s'(s' - 1)(\frac{t}{2})^{s' - 2}$} when either $f = 2^{-u_0}\binom{\abs{\bd{u}}}{\bd{u}}\cdot \fbox{$u_i(u_i - 1)$}$ for some $1\le i\le t'$ \emph{or} $f = 2^{-u_0}\binom{\abs{\bd{u}}}{\bd{u}}\cdot \fbox{$u_i u_j$}$ for some distinct $1\le i<j\le t'$;

\item \fbox{$[s]_2^2(\frac{t}{2})^{s'} + (2+2[s]_2)s'(\frac{t}{2})^{s'-1} + s'(s'-1)(\frac{t}{2})^{s'-2}$} (comes from $[s]_2^2(\frac{t}{2})^{s'} + (4+4[s]_2)\cdot 2^{-1}s'(\frac{t}{2})^{s'-1} + 4\cdot 2^{-2}s'(s'-1)(\frac{t}{2})^{s'-2}$) when $f = 2^{-u_0}\binom{\abs{\bd{u}}}{\bd{u}}\cdot \fbox{$z_{t,0}^2$} = 2^{-u_0}\binom{\abs{\bd{u}}}{\bd{u}}\cdot ([s]_2^2 + (4+4[s]_2)u_0 + 4u_0(u_0 - 1))$ for some $1\le i\le t'$;

\item \fbox{$[s]_2 s'(\frac{t}{2})^{s'-1} + s'(s'-1)(\frac{t}{2})^{s'-2}$} (comes from $[s]_2 s'(\frac{t}{2})^{s'-1} + 2\cdot 2^{-1}s'(s'-1)(\frac{t}{2})^{s'-2}$) when $f = 2^{-u_0}\binom{\abs{\bd{u}}}{\bd{u}}\cdot \fbox{$z_{t,0} u_i$} = 2^{-u_0}\binom{\abs{\bd{u}}}{\bd{u}}\cdot ([s]_2 u_i + 2u_0 u_i)$ for some $1\le i\le t'$.
\end{enumerate}

Next we look at `modified exponential' cases for \emph{even} $t$, freely using $z_{t,0} = 2u_0 + [s]_2$ and $z_{t,t'} = 2u_{t'}$.

\begin{enumerate}
\item $(1\cdot2^{-1}+(\frac{t}{2} - 1)\cdot 1 + 2^{-1})^{s'} = \fbox{$(\frac{t}{2})^{s'}$}$ when ($t$ is even and) $f = 2^{-u_0 - u_{t'}}\binom{\abs{\bd{u}}}{\bd{u}}\cdot \fbox{$1$}$;

\item \fbox{$s'(\frac{t}{2})^{s' - 1}$} when $f = 2^{-u_0 - u_{t'}}\binom{\abs{\bd{u}}}{\bd{u}}\cdot \fbox{$u_i$}$ for some $1\le i\le t' - 1$;

\item \fbox{$s'(s' - 1)(\frac{t}{2})^{s' - 2}$} when either $f = 2^{-u_0 - u_{t'}}\binom{\abs{\bd{u}}}{\bd{u}}\cdot \fbox{$u_i(u_i - 1)$}$ for some $1\le i\le t' - 1$ \emph{or} $f = 2^{-u_0 - u_{t'}}\binom{\abs{\bd{u}}}{\bd{u}}\cdot \fbox{$u_i u_j$}$ for some distinct $1\le i<j\le t' - 1$;

\item $2\cdot 2^{-1} s'(s' - 1)(\frac{t}{2})^{s' - 2} = \fbox{$s'(s' - 1)(\frac{t}{2})^{s' - 2}$}$ when $f = 2^{-u_0 - u_{t'}}\binom{\abs{\bd{u}}}{\bd{u}}\cdot \fbox{$z_{t,t'} u_i$} = 2\cdot 2^{-u_0 - u_{t'}}\binom{\abs{\bd{u}}}{\bd{u}}\cdot u_{t'} u_i$ for some $1\le i\le t' - 1$;

\item $2\cdot 2^{-1} s'(\frac{t}{2})^{s' - 1} = \fbox{$s'(\frac{t}{2})^{s' - 1}$}$ when $f = 2^{-u_0 - u_{t'}}\binom{\abs{\bd{u}}}{\bd{u}}\cdot \fbox{$z_{t,t'}$} = 2\cdot 2^{-u_0 - u_{t'}}\binom{\abs{\bd{u}}}{\bd{u}}\cdot u_{t'}$;

\item $2^2\cdot 2^{-2} s'(s' - 1)(\frac{t}{2})^{s' - 2} + 2^2\cdot 2^{-1} s'(\frac{t}{2})^{s'-1} = \fbox{$s'(s'-1)(\frac{t}{2})^{s'-2} + 2s'(\frac{t}{2})^{s'-1}$}$ when $f = 2^{-u_0 - u_{t'}}\binom{\abs{\bd{u}}}{\bd{u}}\cdot \fbox{$z_{t,t'}^2$} = 2^2\cdot 2^{-u_0 - u_{t'}}\binom{\abs{\bd{u}}}{\bd{u}}[u_{t'}(u_{t'}-1) + u_{t'}]$;

\item \fbox{$[s]_2^2(\frac{t}{2})^{s'} + (2+2[s]_2)s'(\frac{t}{2})^{s'-1} + s'(s'-1)(\frac{t}{2})^{s'-2}$} (comes from $[s]_2^2(\frac{t}{2})^{s'} + (4+4[s]_2)\cdot 2^{-1}s'(\frac{t}{2})^{s'-1} + 4\cdot 2^{-2}s'(s'-1)(\frac{t}{2})^{s'-2}$) when $f = 2^{-u_0 - u_{t'}}\binom{\abs{\bd{u}}}{\bd{u}}\cdot \fbox{$z_{t,0}^2$} = 2^{-u_0 - u_{t'}}\binom{\abs{\bd{u}}}{\bd{u}}\cdot ([s]_2^2 + (4+4[s]_2)u_0 + 4u_0(u_0 - 1))$ for some $1\le i\le t'$;

\item \fbox{$[s]_2 s'(\frac{t}{2})^{s'-1} + s'(s'-1)(\frac{t}{2})^{s'-2}$} (comes from $[s]_2 s'(\frac{t}{2})^{s'-1} + 2\cdot 2^{-1}s'(s'-1)(\frac{t}{2})^{s'-2}$) when $f = 2^{-u_0 - u_{t'}}\binom{\abs{\bd{u}}}{\bd{u}}\cdot \fbox{$z_{t,0} u_i$} = 2^{-u_0 - u_{t'}}\binom{\abs{\bd{u}}}{\bd{u}}\cdot ([s]_2 u_i + 2u_0 u_i)$ for some $1\le i\le t' - 1$;

\item \fbox{$[s]_2 s'(\frac{t}{2})^{s'-1} + s'(s'-1)(\frac{t}{2})^{s'-2}$} (comes from $[s]_2\cdot 2\cdot2^{-1} s'(\frac{t}{2})^{s'-1} + 2^2\cdot 2^{-2}s'(s'-1)(\frac{t}{2})^{s'-2}$) when $f = 2^{-u_0 - u_{t'}}\binom{\abs{\bd{u}}}{\bd{u}}\cdot \fbox{$z_{t,0} z_{t,t'}$} = 2^{-u_0 - u_{t'}}\binom{\abs{\bd{u}}}{\bd{u}}\cdot ([s]_2\cdot 2u_{t'} + 2u_0\cdot 2u_{t'})$.
\end{enumerate}

Finally we look at `ordinary' cases, freely using $z_{t,0} = 2u_0 + [s]_2$.

\begin{enumerate}
\item \fbox{$\binom{s'+t'}{t'}$} when $f = \fbox{$1$}$;

\item \fbox{$2\binom{s'+t'}{t'+2} + \binom{s'+t'}{t'+1}$} when $f = \fbox{$u_i^2$} = u_i(u_i - 1) + u_i$ for some $1\le i\le t'$;

\item \fbox{$\binom{s'+t'}{t'+2}$} when $f = \fbox{$u_i u_j$}$ for some distinct $1\le i<j\le t'$.

\item \fbox{$[s]_2^2\binom{s'+t'}{t'} + (4+4[s]_2)\binom{s'+t'}{t'+1} + 8\binom{s'+t'}{t'+2}$} when $f = \fbox{$z_{t,0}^2$} = [s]_2^2 + (4+4[s]_2)u_0 + 4u_0(u_0 - 1)$ for some $1\le i\le t'$;

\item \fbox{$[s]_2\binom{s'+t'}{t'+1} + 2\binom{s'+t'}{t'+2}$} when $f = \fbox{$z_{t,0} u_i$} = [s]_2 u_i + 2u_0 u_i$ for some $1\le i\le t'$.

\end{enumerate}
\end{proposition}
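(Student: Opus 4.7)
I would push every sum down to the explicit lattice simplex $\Delta \colonequals \{(u_0,\ldots,u_{t'})\in \ZZ_{\ge 0}^{1+t'}: u_0+\cdots+u_{t'} = s'\}$ parameterizing $U_t(s)$ by Proposition~\ref{PROP:change-to-z-coordinates-self-conjugate}, and then evaluate by elementary generating-function manipulations on $\Delta$. Unlike the general case (Corollary~\ref{COR:special-cyclic-sums}), no cyclic-shifts argument is needed here, because the $u$-coordinate parameterization has already realized $U_t(s)$ as the \emph{full} simplex $\Delta$ with no leftover congruence constraint. The change between $z$- and $u$-coordinates is affine: $z_{t,0} = 2u_0 + [s]_2$, $z_{t,i} = u_i$ for the ``interior'' indices $1 \le i \le t'$ (odd $t$) or $1 \le i \le t'-1$ (even $t$), and $z_{t,t'} = 2u_{t'}$ in the remaining even-$t$ case, so every integrand in the proposition is a polynomial of degree at most two in the $u_i$.

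For the modified-exponential cases, I would use the multinomial identity
\[
F(Y_0,\ldots,Y_{t'}) \colonequals (Y_0+Y_1+\cdots+Y_{t'})^{s'} = \sum_{\bd u\in \Delta} \binom{s'}{u_0,\ldots,u_{t'}}\prod_{i=0}^{t'} Y_i^{u_i},
\]
applying operators of the form $Y_{i_1}\partial_{Y_{i_1}}\cdots Y_{i_k}\partial_{Y_{i_k}}$ to bring down monomials $u_{i_1}\cdots u_{i_k}$, and evaluating at $(Y_0,\ldots,Y_{t'}) = (\tfrac12,1,\ldots,1)$ in the odd-$t$ case and at $(\tfrac12,1,\ldots,1,\tfrac12)$ in the even-$t$ case. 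The substitutions $Y_0 = \tfrac12$ (and $Y_{t'} = \tfrac12$ in the even-$t$ case) exactly absorb the $2^{-u_0}$ (and $2^{-u_{t'}}$) prefactors into the multinomial weight, and the displayed values $1\cdot 2^{-1}+t'\cdot 1$ and $1\cdot 2^{-1}+(t'-1)+2^{-1}$ appearing in the statement both equal $t/2$, which is precisely $F$ evaluated at these points, hence the factors $(t/2)^{s'-k}$ in the boxed answers. For the ordinary (unweighted) cases, I would instead apply the geometric series $\prod_{i=0}^{t'}(1-Y_i T)^{-1} = \sum_{s'\ge 0} T^{s'}\sum_{\bd u\in \Delta}\prod_i Y_i^{u_i}$ and extract the coefficient of $T^{s'}$ from the appropriate partial derivatives, which reduces to the standard negative-binomial identity $[T^{s'}]\,T^k(1-T)^{-(t'+1+k)} = \binom{s'+t'}{t'+k}$, yielding $\binom{s'+t'}{t'}$, $\binom{s'+t'}{t'+1}$, $\binom{s'+t'}{t'+2}$ for $k = 0,1,2$.

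For the remaining cases, which involve $z_{t,0}$ or $z_{t,t'}$, I would just expand polynomially, using
\[
z_{t,0}^2 = 4u_0(u_0-1) + (4+4[s]_2)u_0 + [s]_2^2,\qquad z_{t,0}\,u_i = 2u_0 u_i + [s]_2\,u_i,
\]
together with the analogous $z_{t,t'}^2 = 4u_{t'}(u_{t'}-1) + 4u_{t'}$, $z_{t,t'}\,u_i = 2u_{t'}u_i$, and $z_{t,0}z_{t,t'} = 4u_0 u_{t'} + 2[s]_2 u_{t'}$, then plug in the already-computed base sums over $\Delta$. The only real obstacle---if one can call it that---is the bookkeeping of the $[s]_2$-dependent and factor-of-two corrections that appear whenever an index lies at an endpoint ($0$ or $t'$) in the even-$t$ case; the statement of the proposition has already compiled these into the compact form on the right-hand sides, and the proof reduces to checking each boxed term against the corresponding generating-function output.
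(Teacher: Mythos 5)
Your proposal is correct and takes essentially the same route as the paper: the paper likewise observes that the $u$-coordinate parameterization realizes $U_t(s)$ as the full lattice simplex (so no cyclic-shifts step is needed, unlike Corollary \ref{COR:special-cyclic-sums}), handles the modified-exponential cases via the exponential generating function $\exp(2^{-1}Z_0 T)\prod_{i=1}^{t'}\exp(Z_i T)$ (with an extra factor $\exp(2^{-1}Z_{t'}T)$ when $t$ is even), which is the same device as your evaluation of the multinomial identity at $Y_0=\frac12$ (and $Y_{t'}=\frac12$), and handles the ordinary cases by coefficient extraction from a product of geometric series, with the $z_{t,0}$- and $z_{t,t'}$-involving cases reduced by exactly the polynomial expansions you list. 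If anything, your explicit product $\prod_{i=0}^{t'}(1-Y_iT)^{-1}$ is slightly more careful than the paper's abbreviated $\prod_{i=1}^{t'}(1-Z_iT)^{-1}$, which suppresses the index-$0$ factor needed to track $u_0$ in the ordinary cases.
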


\begin{markednewpar}\leavevmode\end{markednewpar}\begin{arxiv}
\begin{sidermk}
Again, we will use these explicit calculations below in the proofs of Theorems \ref{THM:self-conjugate-Armstrong} and \ref{THM:self-conjugate-Fayers}, instead of the indirect approaches taken by Johnson \cite{Johnson}.
\end{sidermk}
\end{arxiv}

\begin{proof}[Proof of ``exponential'' cases]

We can mimic the methods in Corollary \ref{COR:special-cyclic-sums}, using the exponential generating function $\exp(2^{-1}Z_0 T)\prod_{i=1}^{t'} \exp(Z_i T)$ when $t$ is odd, but instead the exponential generating function $\exp(2^{-1}Z_0T)\exp(2^{-1}Z_{t'}T)\prod_{i=1}^{t' - 1}\exp(Z_i T)$ when $t$ is even.
\end{proof}

\begin{proof}[Proof of ``ordinary'' cases]

We can mimic the methods in Corollary \ref{COR:special-cyclic-sums}, using the ordinary generating function $\prod_{i=1}^{t'} (1- Z_i T)^{-1}$.
\end{proof}

\subsubsection{Size of a self-conjugate \texorpdfstring{$t$}{t}-core}
\label{SEC:z-coordinates-partition-size-self-conjugate}

Lemma \ref{LEM:size-of-partition-z-coordinates} (for $\mcal{C}_t$) simplifies in $u$-coordinates for $\mcal{D}_t$.

\begin{lemma}
\label{LEM:size-of-partition-z-coordinates-self-conjugate}

Fix coprime $s,t\ge1$ and $\lambda\in \mcal{D}_t$. For convenience, set $s' \colonequals \floor{s/2}$ and $t' \colonequals \floor{t/2}$. We divide into cases based on the parity of $t$, but in both cases, the non-constant (i.e. homogeneous quadratic) part of the quadratic, namely the polynomial given by $\card{\lambda}+\frac{1}{24}(t^2-1)$, has coefficients summing to $0$.

\begin{itemize}
\item If $t$ is odd, then the size $\card{\lambda}$ is given by a quadratic polynomial
\[
-\tfrac{1}{24}(t^2-1) + M_2(u_1,\ldots,u_{t'}) + S_2(u_1,\ldots,u_{t'}) + z_{t,0} L_1(u_1,\ldots,u_{t'}) + \tfrac{1}{24}(t^2-1) z_{t,0}^2
\]
in $z_{t,0},u_1,\ldots,u_{t'}$, where $S_2$ is a homogeneous quadratic with only `square' terms, and with coefficient sum $\frac{1}{24}(t-2)(t^2-1)$; $L_1$ is a homogeneous linear polynomial with coefficient sum $-2\cdot\frac{1}{24}(t^2-1)$; and $M_2$ is the ``leftover'' homogeneous quadratic with only `mixed' terms, and with coefficient sum $-\frac{1}{24}(t-3)(t^2-1)$.

\item If $t$ is even, then the size $\card{\lambda}$ is given by a quadratic polynomial
\begin{align*}
-\tfrac{1}{24}(t^2-1)
&+ M_2(u_1,\ldots,u_{t' - 1})
+ S_2(u_1,\ldots,u_{t' - 1}) + \tfrac{1}{24}(t^2-1)(z_{t,t'}^2 + z_{t,0}^2) \\
&- \tfrac{1}{24}(t^2+2) z_{t,0}z_{t,t'} + z_{t,0} L_0(u_1,\ldots,u_{t' - 1}) + z_{t,t'} L_{t'}(u_1,\ldots,u_{t'-1})
\end{align*}
in $z_{t,0},u_1,\ldots,u_{t' - 1},z_{t,t'}$, where $S_2$ is a homogeneous quadratic with only `square' terms, and with coefficient sum $\frac{1}{24}(t-2)(t^2-2t-2)$; $L_0$ is a homogeneous linear polynomial with coefficient sum $-2\cdot \frac{1}{24}(t^2-1)+\frac{1}{24}(t^2+2) = -\frac{1}{24}(t^2-4)$; $L_{t'}$ is a homogeneous linear polynomial with coefficient sum $2\cdot\frac{1}{24}(t^2+2)$; and $M_2$ is the ``leftover'' homogeneous quadratic with with only `mixed' terms, and with coefficient sum $-\frac{1}{24}(t^3-2t^2+2t+8)$.
\end{itemize}
\end{lemma}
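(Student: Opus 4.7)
The plan is to specialize Lemma \ref{LEM:size-of-partition-z-coordinates} (the general $t$-core size formula in $z$-coordinates) to self-conjugate $t$-cores via the symmetry $z_{t,i} = z_{t,-i}$ from Proposition \ref{PROP:change-to-z-coordinates-self-conjugate}, then convert to $u$-coordinates. Writing the $z$-coordinate formula as $\card{\lambda} + \tfrac{1}{24}(t^2-1) = \tfrac{1}{24}(t^2-1)\sum_\ell z_{t,\ell}^2 + M_2^{(z)}$ with $M_2^{(z)} = \sum_{k=1}^{t-1} d_k S_k$ (where $S_k = \sum_{i \in \ZZ/t\ZZ} z_{t,i}z_{t,i+k}$, $d_k = d_{t-k}$, and $\sum_k d_k = -\tfrac{1}{24}(t^2-1)$), the aim is to collect the resulting quadratic in the variables $z_{t,0}, u_1,\ldots,u_{t'}$ (odd $t$) or $z_{t,0}, u_1,\ldots,u_{t'-1}, z_{t,t'}$ (even $t$) and match against the claimed decomposition.

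After imposing $z_{t,i} = z_{t,t-i}$, the square-sum reduces to $z_{t,0}^2 + 2\sum_{i=1}^{t'} u_i^2$ (odd $t$) or $z_{t,0}^2 + z_{t,t'}^2 + 2\sum_{i=1}^{t'-1} u_i^2$ (even $t$), pinning down the $\tfrac{1}{24}(t^2-1)$ coefficients for $z_{t,0}^2$ and $z_{t,t'}^2$. Each $S_k$ under the symmetry splits into: a $2z_{t,0}u_k$ contribution; for even $t$ at $k=t/2$, a $2z_{t,0}z_{t,t'}$ contribution; $u_i^2$ contributions arising whenever $z_{t,i}z_{t,t-i}$ appears in $S_k$ (i.e., when $k \equiv \pm 2i \pmod t$); contributions $2z_{t,t'}u_i$ in the even case from $S_k$ with $k < t/2$; and residual $u_iu_j$ mixed terms.

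To pin down the remaining coefficients, one uses the explicit formula $d_k = \tfrac{1}{2t}\sum_{j=0}^{t-1} b_j b_{(j+k)\bmod t}$ with $b_j = \tfrac{t-1}{2}-j$, obtained by expanding $\card{\lambda}+\tfrac{1}{24}(t^2-1) = \tfrac{1}{2t}\sum_i[a_{t,i}-\tfrac{t-1}{2}]^2$ and substituting $a_{t,k+\ell s}-\tfrac{t-1}{2} = \sum_j (\tfrac{t-1}{2}-j)z_{t,j+\ell}$ from Proposition \ref{PROP:change-to-z-coordinates}. Telescoping the wrap-around shift together with $\sum_{j=0}^{t/2-1} b_j = \tfrac{1}{8}t^2$ and $\sum_{j=0}^{t-1} b_j^2 = \tfrac{1}{12}t(t^2-1)$ yields the closed form $d_{t/2} = -\tfrac{1}{48}(t^2+2)$, matching the claimed $z_{t,0}z_{t,t'}$ coefficient. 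The coefficient sums of $S_2, L_1, L_0, L_{t'}, M_2$ (as appropriate in each parity case) then follow by combining sparse polynomial evaluations---at $z_{t,0}=1$ with all other variables zero, at one $u_i = 1$ with all other variables zero, and at all variables equal to $1$---with the cyclic pairing $d_k = d_{t-k}$ to sum $d_k$ over suitable subsets of $\{1,\ldots,t-1\}$, without computing individual $d_k$ values.

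The main obstacle is the bookkeeping of how mixed $z$-terms in $M_2^{(z)}$ redistribute under the self-conjugate symmetry: distinguishing \emph{squares from squares} (the $\tfrac{1}{24}(t^2-1)\cdot 2u_i^2$ contributions) from \emph{squares from collapsed mixed terms} (the $u_i^2$ contributions coming from $z_{t,i}z_{t,t-i}$), and isolating the $z_{t,t'}^2$, $z_{t,0}z_{t,t'}$, and $z_{t,t'}u_i$ contributions in the even case arising from $S_{t/2}$ and neighboring $S_k$. The final claim that the non-constant (homogeneous quadratic) part has coefficient sum $0$ is then immediate from the $z$-coordinate version of the same identity in Lemma \ref{LEM:size-of-partition-z-coordinates}, since the all-ones point in $u$-coordinates corresponds to $z_{t,\ell}=1$ for all $\ell$.
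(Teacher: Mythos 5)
Your plan is essentially the paper's own proof. The paper also starts from $\card{\lambda} = -\tfrac{1}{24}(t^2-1) + \tfrac{1}{2t}P$ with $P = \sum_{\ell}\bigl(\sum_{j=0}^{t-1} b_j z_{t,j+\ell}\bigr)^2$, $b_j = \tfrac{t-1}{2}-j$, restricts under $z_{t,i}=z_{t,-i}$, and extracts each block ($\alpha_i$, $\beta_i$, $\beta_{t'}$, $\gamma_i$) via exactly the autocorrelations $c(k)=\sum_j b_j b_{j+k}$ that underlie your $d_k = \tfrac{1}{2t}c(k)$; it likewise gets the leftover $M_2$ from $P(1,\ldots,1)=0$, and your even-indicator-type sparse evaluations are a repackaging of its parity sums $\sum_{i=0}^{t-1}(\tfrac{t-1}{2}-[2i+\ell]_t)=\pm t'$. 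Your closed form $d_{t/2} = -\tfrac{1}{48}(t^2+2)$ is correct and gives the stated $-\tfrac{1}{24}(t^2+2)\,z_{t,0}z_{t,t'}$, and in the odd-$t$ case your bookkeeping reproduces every stated sum.

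There is, however, a genuine issue in the even-$t$ case: if you carry out your $d_k$ bookkeeping correctly, it will \emph{refute}, not confirm, the stated sums for $L_{t'}$ and $M_2$. The monomials collapsing to $z_{t,t'}u_i$ are $z_{t,t'}z_{t,i}$ and $z_{t,t'}z_{t,-i}$, each with coefficient $2d_{t'-i}$ (note $d_{t'+i}=d_{t'-i}$), so the $L_{t'}$ coefficient sum is
\[
4\sum_{m=1}^{t'-1} d_m \;=\; 2\Bigl(\sum_{k=1}^{t-1} d_k - d_{t'}\Bigr) \;=\; -\tfrac{1}{24}(t^2-4),
\]
identical to $L_0$'s sum, \emph{not} $2\cdot\tfrac{1}{24}(t^2+2)$; the all-ones identity then forces the $M_2$ sum to be $-\tfrac{1}{24}(t^3-5t^2+2t+8)$ rather than $-\tfrac{1}{24}(t^3-2t^2+2t+8)$. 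A direct check at $t=4$ confirms this: there $P = 5\sum z_i^2 - 2\sum z_iz_{i+1} - 6(z_0z_2+z_1z_3)$, so under $z_1=z_3=u_1$ one gets $\card{\lambda} = -\tfrac58 + \tfrac58(z_0^2+z_2^2) + \tfrac12 u_1^2 - \tfrac12 z_0u_1 - \tfrac12 z_2u_1 - \tfrac34 z_0z_2$, i.e. $L_{t'}$ has sum $-\tfrac12 = -\tfrac{1}{24}(t^2-4)$ and $M_2=0$, while the statement claims $\tfrac32$ and $-2$ (the latter impossible, as there are no mixed $u_iu_j$ monomials when $t'-1=1$). The source is an index slip in the paper's telescoping for the $\gamma_i$: in fact $\beta_{t'}+\sum_{i=1}^{t'-1}\gamma_i = 2\sum_{k\neq 0}c(k) = -2c(0)$, not $2\sum_{i=1}^{t-1}c(t'+i) = -2c(t') = -\beta_{t'}$ (at $t=4$: $-6+(-4)=-10=-2c(0)$, whereas $-\beta_{t'}=6$). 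The discrepancy is harmless downstream: in the even-$t$ proof of Theorem \ref{THM:self-conjugate-Fayers}, the $L_{t'}$ and $M_2$ sums enter only the $s'(s'-1)N^{s'-2}$ coefficient, where only their grand total---pinned to $0$ by the all-ones identity---is used. So your method is sound and would in fact correct the record, but your final step of ``matching against the claimed decomposition'' cannot succeed verbatim for these two coefficient sums, and your write-up should derive the corrected values rather than assert agreement.
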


\begin{markednewpar}\leavevmode\end{markednewpar}\begin{arxiv}
\begin{sidermk}
We will only explicitly use this result for $(s,t)$-cores $\lambda\in \mcal{D}_s\cap \mcal{D}_t$, when $s,t$ are coprime, in the proof of Theorems \ref{THM:self-conjugate-Armstrong} and \ref{THM:self-conjugate-Fayers}. But the general $t$-core version may help for other problems.
\end{sidermk}

\begin{sidermk}
In principle we could compute the coefficients more explicitly, but by the computations in Proposition \ref{PROP:special-u-coordinate-sums}, we will not need to.
\end{sidermk}
\end{arxiv}

\begin{proof}[Recap of $z$-coordinate preliminaries]

Write $\card{\lambda} = -\frac{1}{24}(t^2-1) + \frac{1}{2t}\cdot P$ for convenience. Since $\lambda\in \mcal{D}_t\subseteq \mcal{C}_t$ and $s,t$ are coprime, Equation \eqref{EQ:size-a-to-z-translation} (from the proof of Lemma \ref{LEM:size-of-partition-z-coordinates}) gives the $a$-to-$z$ translation $P \colonequals \sum_{i=0}^{t-1} [a_{t,i} - \frac{t-1}{2}]^2 = \sum_{\ell\in\ZZ/t\ZZ}(\sum_{j=0}^{t-1} (\frac{t-1}{2} - j)z_{t,j+\ell})^2$, with $P(1,\ldots,1) = 0$ in the $z$-coordinates. Recall also from Lemma \ref{LEM:size-of-partition-z-coordinates} that $[z_{t,i}^2]P = \sum_{j=0}^{t-1}(\frac{t-1}{2} - j)^2 = \frac{1}{12}t(t^2-1)$ for all $i\in\ZZ/t\ZZ$.
\end{proof}

We finish by breaking into cases based on parity of $t$ and using Proposition \ref{PROP:change-to-z-coordinates-self-conjugate} (which applies to $\mcal{D}_t$, not just $\mcal{D}_s\cap \mcal{D}_t$). For convenience, let $[x]_t\in\{0,1,\ldots,t-1\}$ denote the least nonnegative residue of $x\pmod{t}$.

\begin{proof}[Proof of Lemma \ref{LEM:size-of-partition-z-coordinates-self-conjugate} for odd $t$]

In the $u$-coordinates (using $u_i = z_{t,i} = z_{t,-i}$ for $1\le i\le \frac{t-1}{2} = t'$), the polynomial $P$ takes the form
\[
P = M(u_1,\ldots,u_{t'}) + \sum_{i=1}^{t'} \alpha_i u_i^2 + \sum_{i=1}^{t'} \beta_i u_i z_{t,0} + \alpha_0 z_{t,0}^2. 
\]

We make the following calculations, often suppressing $z_i \colonequals z_{t,i}$.
\begin{itemize}
\item We have $\alpha_0 = [z_0^2]P$ equal to $\sum_{j=0}^{t-1} (\frac{t-1}{2} - j)^2$ (which simplifies as $\frac{1}{12}t(t^2-1)$).

\item To determine $\alpha_i$ (for $1\le i\le \frac{t-1}{2}$), we evaluate $\alpha_i = [u_i^2]P = ([z_i^2]P + [z_{-i}^2]P + [z_i z_{-i}]P)$ as $2\cdot\frac{1}{12}t(t^2-1) + 2\sum_{\ell\in\ZZ/t\ZZ}(\frac{t-1}{2} - [i-\ell]_t) (\frac{t-1}{2} - [-i-\ell]_t)$, or $2\cdot\frac{1}{12}t(t^2-1) + 2\sum_{\ell\in\ZZ/t\ZZ}(\frac{t-1}{2} - [2i+\ell]_t) (\frac{t-1}{2} - [\ell]_t)$. It follows that $\sum_{i=0}^{t'} \alpha_i = [z_0^2]P + \frac12\sum_{i=1}^{t-1} ([z_i^2]P + [z_{-i}^2]P + [z_i z_{-i}]P)$ evaluates to
\[
(t-1)\cdot \frac{t(t^2-1)}{12} + \sum_{\ell\in\ZZ/t\ZZ}\sum_{i=0}^{t-1}\left(\frac{t-1}{2} - [\ell]_t \right) \left(\frac{t-1}{2} - [2i+\ell]_t \right).
\]
But $\gcd(2,t) = 1$, so $\sum_{i=0}^{t-1}(\frac{t-1}{2} - [2i+\ell]_t) = 0$ for each $\ell\in\ZZ/t\ZZ$. Thus $\sum_{i=0}^{t'} \alpha_i = (t-1)\cdot \frac{1}{12}t(t^2-1)$, and $\sum_{i=1}^{t'} \alpha_i = (t-2)\cdot\frac{1}{12}t(t^2-1)$.

\item Similarly, we expand (for $1\le i\le \frac{t-1}{2}$) $\beta_i = ([z_0 z_i] + [z_0z_{-i}])P = 4\sum_{\ell\in\ZZ/t\ZZ} (\frac{t-1}{2} - [i+\ell]_t)(\frac{t-1}{2} - [\ell]_t)$, so that $\sum_{i=1}^{t'} \beta_i = 2\sum_{\ell\in\ZZ/t\ZZ}(\frac{t-1}{2} - [\ell]_t)\sum_{i=1}^{t-1}(\frac{t-1}{2} - [i+\ell]_t) = 2\sum_{\ell}-(\frac{t-1}{2} - [\ell]_t)^2 = -2\cdot \frac{1}{12}t(t^2-1)$.
\end{itemize}
Finally, $P(1,\ldots,1) = 0$ gives $M(1,\ldots,1) = -(t-3)\cdot \frac{1}{12}t(t^2-1)$, and substituting $P$ into $\card{\lambda} = -\frac{1}{24}(t^2-1) +  \frac{1}{2t} P$ finishes the job.
\end{proof}

\begin{proof}[Proof of Lemma \ref{LEM:size-of-partition-z-coordinates-self-conjugate} for even $t$]

In the $u$-coordinates (using $u_i = z_{t,i} = z_{t,-i}$ for $1\le i\le \frac{t}{2} - 1 = t' - 1$), the polynomial $P$ takes the form
\[
P = M(u_1,\ldots,u_{t' - 1}) + \sum_{i=1}^{t' - 1} \gamma_i u_i z_{t,t'} + \sum_{i=1}^{t' - 1} \alpha_i u_i^2 + \sum_{i=1}^{t' - 1} \beta_i u_i z_{t,0} + \alpha_0 z_{t,0}^2 + \beta_{t'} z_{t,0} z_{t'} + \alpha_{t'} z_{t,t'}^2. 
\]
We make the following calculations, often suppressing $z_i \colonequals z_{t,i}$.
\begin{itemize}
\item We have $\alpha_0 = [z_{t,0}^2]P$ and $\alpha_{t'} = [z_{t,t'}^2]P$ both equal to $\sum_{j=0}^{t-1} (\frac{t-1}{2} - j)^2$ (which simplifies as $\frac{1}{12}t(t^2-1)$).

\item For $1\le i\le t' - 1$, we evaluate $\alpha_i = [u_i^2]P = 2\cdot\frac{1}{12}t(t^2-1) + 2\sum_{\ell\in\ZZ/t\ZZ}(\frac{t-1}{2} - [\ell]_t)(\frac{t-1}{2} - [2i+\ell]_t)$ as in the odd $t$ case. It follows that $\sum_{i=0}^{t'} \alpha_i = [z_0^2]P + [z_{t'}^2]P + \frac12\sum_{0<\abs{i-t'}<t'} ([z_i^2]P + [z_{-i}^2]P + [z_i z_{-i}]P)$ evaluates to
\[ (t-2)\cdot\frac{t(t^2-1)}{12} + \sum_{\ell\in\ZZ/t\ZZ}\left(\frac{t-1}{2} - [\ell]_t \right)\sum_{i=0}^{t-1} \left(\frac{t-1}{2} - [2i+\ell]_t \right). \]
Note that $\sum_{i=0}^{t-1} (\frac{t-1}{2} - [2i+\ell]_t)$ depends only on the parity of the residue class $\ell\pmod{t}$; it equals $t\frac{t-1}{2} - 2(0+2+\cdots+(t-2)) = (1+3+\cdots+(t-1)) - (0+2+\cdots+(t-2)) = t'$ if $\ell$ is even, and $-t'$ if $\ell$ is odd. Thus $\sum_{\ell\in\ZZ/t\ZZ}(\frac{t-1}{2} - [\ell]_t)\sum_{i=0}^{t-1} (\frac{t-1}{2} - [2i+\ell]_t)$ evaluates to $\sum_{j=0}^{t' - 1}(\frac{t-1}{2} - 2j)t' + (\frac{t-1}{2} - 2j-1)(-t') = \sum_{j=0}^{t'-1}1\cdot t' = \frac{1}{4}t^2$, whence $\sum_{i=0}^{t'} \alpha_i = (t-2)\cdot \frac{1}{12}t(t^2-1) + \frac{1}{4}t^2$, and $\sum_{i=1}^{t' - 1} \alpha_i = (t-4)\cdot \frac{1}{12}t(t^2-1) + \frac{1}{4}t^2 = \frac{1}{12}t(t-2)(t^2-2t-2)$.

\item Next, $\beta_{t'} = [z_0 z_{t'}] P = 2\sum_{\ell\in\ZZ/t\ZZ} (\frac{t-1}{2} - [t' +\ell]_t)(\frac{t-1}{2} - [\ell]_t)$, which simplifies to $2\sum_{j=0}^{t'-1}(\frac{t-1}{2} - j)(\frac{t-1}{2}-j-t')+(\frac{t-1}{2} -j-t')(\frac{t-1}{2} -j) = -\frac{1}{3}t'(2t'^2+1) = - \frac{1}{12}t(t^2+2)$.

\item Similarly, we expand (for $1\le i\le t' - 1$) $\beta_i = ([z_0 z_i] + [z_0z_{-i}])P = 4\sum_{\ell\in\ZZ/t\ZZ} (\frac{t-1}{2} - [i+\ell]_t)(\frac{t-1}{2} - [\ell]_t)$, so that $\sum_{i=1}^{t'} \beta_i = 2\sum_{\ell}(\frac{t-1}{2} - [\ell]_t)\sum_{i=1}^{t-1}(\frac{t-1}{2} - [i+\ell]_t) = 2\sum_{\ell}-(\frac{t-1}{2} - [\ell]_t)^2 = -2\cdot \frac{1}{12}t(t^2-1)$. It follows that $\sum_{i=1}^{t' - 1} \beta_i = -2\cdot \frac{1}{12}t(t^2-1) - \beta_{t'} = -2\cdot \frac{1}{12}t(t^2-1) + \frac{1}{12}t(t^2+2) = -\frac{1}{12}t(t^2-4)$.

\item It remains to compute, for $1\le i\le t' - 1$, the coefficient $\gamma_i = ([z_{t'}z_i]+[z_{t'}z_{-i}])P = 4\sum_{\ell\in\ZZ/t\ZZ}(\frac{t-1}{2} - [t'+i+\ell]_t)(\frac{t-1}{2} - [\ell]_t)$, so that $\beta_{t'} + \sum_{i=1}^{t'-1}\gamma_i = 2\sum_{\ell\in\ZZ/t\ZZ} (\frac{t-1}{2} - [\ell]_t)\sum_{i=1}^{t-1}(\frac{t-1}{2} - [t'+i+\ell]_t) = 2\sum_{\ell\in\ZZ/t\ZZ}-(\frac{t-1}{2}-[\ell]_t)(\frac{t-1}{2}-[t'+\ell]_t) = -\beta_{t'}$. Thus $\sum_{i=1}^{t'-1}\gamma_i = -2\beta_{t'} = 2\cdot \frac{1}{12}t(t^2+2)$.
\end{itemize}
Finally, $P(1,\ldots,1) = 0$ gives $M(1,\ldots,1) = -\frac{1}{12}t(t^3-2t^2+2t+8)$, and substituting $P$ into $\card{\lambda} = -\frac{1}{24}(t^2-1) +  \frac{1}{2t} P$ finishes the job.
\end{proof}

\subsection{Proofs of self-conjugate conjectures}
\label{SEC:main-theorem-proofs-self-conjugate}

This section is the self-conjugate analog of Section \ref{SEC:main-theorem-proofs}, addressing Armstrong's and Fayers' self-conjugate conjectures.

\begin{proof}[Proof of Theorem \ref{THM:self-conjugate-Armstrong} by direct computation for odd $t$]
We use $u$-coordinates. For convenience, let $s' = \floor{s/2}$ and $t' = \floor{t/2}$, and let $[s]_2 \colonequals s - 2s'\in\{0,1\}$. By Proposition \ref{PROP:special-u-coordinate-sums}, the denominator is just $\sum_{U_t(s)}1 = \binom{t'+s'}{t'}$.

For odd $t$, Lemma \ref{LEM:size-of-partition-z-coordinates-self-conjugate} says the size $\card{\lambda}$ is given by a quadratic polynomial
\[
-\tfrac{1}{24}(t^2-1) + M_2(u_1,\ldots,u_{t'}) + S_2(u_1,\ldots,u_{t'}) + z_{t,0} L_1(u_1,\ldots,u_{t'}) + \tfrac{1}{24}(t^2-1) z_{t,0}^2
\]
in $z_{t,0},u_1,\ldots,u_{t'}$, where $M_2$ is a homogeneous quadratic with only `mixed' terms, and coefficient sum $-\frac{1}{24}(t-3)(t^2-1)$; $S_2$ is a homogeneous quadratic with only `square' terms, and coefficient sum $\frac{1}{24}(t-2)(t^2-1)$; and $L_1$ is a homogeneous linear polynomial with coefficient sum $-2\cdot \frac{1}{24}(t^2-1)$. Then by Proposition \ref{PROP:special-u-coordinate-sums}, the numerator $\sum_{U_t(s)}\card{\lambda}$ evaluates to
\begin{align*}
& -\frac{t^2-1}{24}\cdot \binom{N}{t'} - \frac{(t-3)(t^2-1)}{24}\cdot \binom{N}{t'+2}
+ \frac{(t-2)(t^2-1)}{24} \left[ 2\binom{N}{t'+2} + \binom{N}{t'+1} \right] \\
&-2\frac{t^2-1}{24} \left[ [s]_2\binom{N}{t'+1} + 2\binom{N}{t'+2} \right] \\
&+ \frac{t^2-1}{24} \left[ [s]_2^2\binom{N}{t'} + (4+4[s]_2)\binom{N}{t'+1} + 8\binom{N}{t'+2} \right] ,
\end{align*}
where we have suppressed $N\colonequals s'+t'$. Collecting terms, and using $\binom{N}{t'+1} = \frac{s'}{t'+1}\binom{N}{t'}$ and $\binom{N}{t'+2} = \frac{s'-1}{t'+2}\binom{N}{t'+1}$ (viewed as polynomial identities in $s'$, for fixed $t'\ge0$), the numerator becomes
\begin{align*}
\frac{t^2-1}{24} &\biggl[ (-1+[s]_2^2)\binom{N}{t'} + [-(t-3)+2(t-2)-2\cdot2+8] \binom{N}{t'+2} \\
&+ [(t-2)-2[s]_2 + (4+4[s]_2)]\binom{N}{t'+1} \biggr] \\
% &= \frac{t^2-1}{24}\binom{N}{t'} \left[(-1+[s]_2^2) + (t+3)\frac{s'(s'-1)}{(t'+1)(t'+2)} + [t+2+2[s]_2]\frac{s'}{t'+1} \right] \\
&= \frac{t^2-1}{24}\binom{N}{t'} \left[ (-1+[s]_2^2) + \frac{4s'(s'-1)}{t+1} + [t+2+2[s]_2]\frac{2s'}{t+1} \right],
\end{align*}
where we have used $t'+2 = \frac{t-1}{2}+2 = \frac{t+3}{2}$ to simplify. Factoring $t^2-1 = (t-1)(t+1)$ and multiplying through by $t+1$ transforms the numerator expression to the product of $\frac{1}{24}(t-1)\binom{N}{t'}$ with
\[
(-1+[s]_2^2)(t+1) + 4s'^2+4[s]_2s'+2ts' = -t-1 + (2s'+[s]_2)^2 + ([s]_2^2+2s')t.
\]
But $[s]_2^2 = [s]_2$ (and $2s'+[s]_2 = s$), so the product evaluates to $\frac{1}{24}(t-1)\binom{N}{t'}(-t-1 + s^2 + st)$, which factors as $\frac{1}{24}(t-1)\binom{N}{t'}(s-1)(s+t+1)$. Dividing numerator by denominator yields the desired ratio of $\frac{1}{24}(s-1)(t-1)(s+t+1)$.
\end{proof}

\begin{proof}[Proof of Theorem \ref{THM:self-conjugate-Armstrong} for even $t$]

If $t$ is even, then $s$ is odd. But the un-weighted problem is symmetric in $s,t$, so swapping the roles of $s,t$ in the previous proof establishes the claim.
\end{proof}

\begin{markednewpar}\leavevmode\end{markednewpar}\begin{arxiv}
\begin{sidermk}
In principle we could give a more direct proof for even $t$ along the same lines as the odd $t$ proof, but we do not do so since our main focus is on Fayers' conjectures and not Armstrong's conjectures.
\end{sidermk}
\end{arxiv}

The same techniques prove Fayers' self-conjugate conjecture.

\begin{proof}[Proof of Theorem \ref{THM:self-conjugate-Fayers} for odd $t$]

We use $u$-coordinates. For convenience, let $s' = \floor{s/2}$ and $t' = \floor{t/2}$, and let $[s]_2 \colonequals s - 2s'\in\{0,1\}$. By Proposition \ref{PROP:stab-computation-self-conjugate} and Proposition \ref{PROP:special-u-coordinate-sums}, the denominator times $s'!$ is just $\sum_{U_t(s)}2^{-u_0}\binom{\abs{\bd{u}}}{\bd{u}} = (\frac{t}{2})^{s'}$ where $\bd{u} = (u_0,\ldots,u_{t'})$.

For odd $t$, Lemma \ref{LEM:size-of-partition-z-coordinates-self-conjugate} says the size $\card{\lambda}$ is given by a quadratic polynomial
\[
-\tfrac{1}{24}(t^2-1) + M_2(u_1,\ldots,u_{t'}) + S_2(u_1,\ldots,u_{t'}) + z_{t,0} L_1(u_1,\ldots,u_{t'}) + \tfrac{1}{24}(t^2-1) z_{t,0}^2
\]
in $z_{t,0},u_1,\ldots,u_{t'}$, where $M_2$ is a homogeneous quadratic with only `mixed' terms, and coefficient sum $-\frac{1}{24}(t-3)(t^2-1)$; $S_2$ is a homogeneous quadratic with only `square' terms, and coefficient sum $\frac{1}{24}(t-2)(t^2-1)$; and $L_1$ is a homogeneous linear polynomial with coefficient sum $-2\cdot \frac{1}{24}(t^2-1)$. Then by Propositions \ref{PROP:stab-computation-self-conjugate} and \ref{PROP:special-u-coordinate-sums}, the numerator times $s'!$, i.e. the sum $\sum_{U_t(s)}2^{-u_0}\binom{\abs{\bd{u}}}{\bd{u}}\card{\lambda}$, evaluates to
\begin{align*}
&-\tfrac{1}{24}(t^2-1)\cdot N^{s'}
- \tfrac{1}{24}(t-3)(t^2-1)\cdot s'(s'-1)N^{s'-2} \\
&+ \tfrac{1}{24}(t-2)(t^2-1) [s'(s'-1)N^{s'-2}
+ s' N^{s'-1}] \\
&-2\cdot \tfrac{1}{24}(t^2-1)([s]_2s'N^{s'-1} + s'(s'-1)N^{s'-2}) \\
&+ \tfrac{1}{24}(t^2-1)([s]_2^2 N^{s'} + (2+2[s]_2)s' N^{s'-1} + s'(s'-1)N^{s'-2}),
\end{align*}
where we have suppressed $N\colonequals \frac{t}{2}$. Collecting terms, we have
\begin{itemize}
\item $N^{s'}$ coefficient $-\frac{1}{24}(t^2-1) + \frac{1}{24}(t^2-1)[s]_2^2$;

\item $s'N^{s'-1}$ coefficient $\frac{1}{24}(t^2-1)[(t-2) - 2[s]_2 + (2+2[s]_2)]$, which is just $\frac{1}{24}(t^2-1)\cdot t$;

\item $s'(s'-1)N^{s'-2}$ coefficient $-\frac{1}{24}(t-3)(t^2-1) + \frac{1}{24}(t-2)(t^2-1) - 2\cdot \frac{1}{24}(t^2-1) + \frac{1}{24}(t^2-1)$, or more conceptually, the sum of coefficients of the polynomial $\card{\lambda} + \frac{1}{24}(t^2-1)$, which is $0$ as noted at the case-free beginning of Lemma \ref{LEM:size-of-partition-z-coordinates-self-conjugate},
\end{itemize}
so that the numerator is $N^{s'}\cdot \frac{1}{24}(t^2-1)(-1+[s]_2^2 + t\cdot \frac{s'}{N})$, which simplifies, via $[s]_2^2 = [s]_2 = s - 2s'$, as $N^{s'}\cdot \frac{1}{24}(t^2-1)(s-1)$. Finally, dividing numerator (times $s'!$) by denominator (times $s'!$) yields the desired ratio of $\frac{1}{24}(s-1)(t^2-1)$.
\end{proof}

\begin{proof}[Proof of Theorem \ref{THM:self-conjugate-Fayers} for even $t$]

We use $u$-coordinates. For convenience, let $s' = \floor{s/2}$ and $t' = \floor{t/2}$, and let $[s]_2 \colonequals s - 2s'\in\{0,1\}$. By Proposition \ref{PROP:stab-computation-self-conjugate} and Proposition \ref{PROP:special-u-coordinate-sums}, the denominator times $s'!$ is just $\sum_{U_t(s)}2^{-u_0}\binom{\abs{\bd{u}}}{\bd{u}} = (\frac{t}{2})^{s'}$.

For even $t$, Lemma \ref{LEM:size-of-partition-z-coordinates-self-conjugate} says the size $\card{\lambda}$ is given by a quadratic polynomial
\begin{align*}
-\tfrac{1}{24}(t^2-1)
&+ M_2(u_1,\ldots,u_{t' - 1})
+ S_2(u_1,\ldots,u_{t' - 1}) + \tfrac{1}{24}(t^2-1)(z_{t,t'}^2 + z_{t,0}^2) \\
&- \tfrac{1}{24}(t^2+2) z_{t,0}z_{t,t'} + z_{t,0} L_0(u_1,\ldots,u_{t' - 1}) + z_{t,t'} L_{t'}(u_1,\ldots,u_{t'-1})
\end{align*}
in $z_{t,0},u_1,\ldots,u_{t' - 1},z_{t,t'}$, where $M_2$ is a homogeneous quadratic with with only `mixed' terms, and with coefficient sum $-\frac{1}{24}(t^3-2t^2+2t+8)$; $S_2$ is a homogeneous quadratic with only `square' terms, and coefficient sum $\frac{1}{24}(t-2)(t^2-2t-2)$; $L_0$ is a homogeneous linear polynomial with coefficient sum $-2\cdot \frac{1}{24}(t^2-1)+\frac{1}{24}(t^2+2) = -\frac{1}{24}(t^2-4)$; and $L_{t'}$ is a homogeneous linear polynomial with coefficient sum $2\cdot \frac{1}{24}(t^2+2)$. Then by Propositions \ref{PROP:stab-computation-self-conjugate} and \ref{PROP:special-u-coordinate-sums}, the numerator times $s'!$, i.e. the sum $\sum_{U_t(s)}2^{-u_0}\binom{\abs{\bd{u}}}{\bd{u}}\card{\lambda}$, is just the sum of the following terms:
\begin{itemize}
\item $-\frac{1}{24}(t^2-1) \cdot N^{s'}$ (from constant term);

\item $-\frac{1}{24}(t^3-2t^2+2t+8)\cdot s'(s'-1)N^{s' - 2}$ (from $M_2$);

\item $\frac{1}{24}(t-2)(t^2-2t-2)\cdot [s'(s'-1)N^{s'-2} + s'N^{s'-1}]$ (from $S_2$);

\item $\frac{1}{24}(t^2-1)\cdot [s'(s'-1)N^{s'-2} + 2s'N^{s'-1}]$ (from $z_{t,t'}^2$);

\item $\frac{1}{24}(t^2-1)\cdot [[s]_2^2 N^{s'} + (2+2[s]_2)s' N^{s'-1} + s'(s'-1)N^{s'-2}]$ (from $z_{t,0}^2$);

\item $-\frac{1}{24}(t^2+2)\cdot [[s]_2s' N^{s'-1} + s'(s'-1) N^{s'-2}]$ (from $z_{t,0} z_{t,t'}$);

\item $-\frac{1}{24}(t^2-4)\cdot [[s]_2 s' N^{s'-1} + s'(s'-1) N^{s'-2}]$ (from $z_{t,0} L_0$);

\item $2\cdot \frac{1}{24}(t^2+2)\cdot s'(s' - 1) N^{s' - 2}$ (from $z_{t,t'} L_{t'}$),
\end{itemize}
where we have suppressed $N\colonequals \frac{t}{2}$. Collecting terms, we have
\begin{itemize}
\item $N^{s'}$ coefficient $-\frac{1}{24}(t^2-1) + \frac{1}{24}(t^2-1)[s]_2^2$, which is $0$ since $s$ is odd;

\item $s'N^{s'-1}$ coefficient $\frac{1}{24}(t-2)(t^2-2t-2) + 2\cdot \frac{1}{24}(t^2-1) + \frac{1}{24}(t^2-1)(2+2[s]_2) - [s]_2\frac{1}{24}(t^2+2) - \frac{1}{24}(t^2-4)$, which is $\frac{1}{24}t(t^2+2)$ since $s$ is odd;

\item $s'(s'-1)N^{s'-2}$ coefficient $-\frac{1}{24}(t^3-2t^2+2t+8) + \frac{1}{24}(t-2)(t^2-2t-2) + \frac{1}{24}(t^2-1) + \frac{1}{24}(t^2-1) - \frac{1}{24}(t^2+2) - \frac{1}{24}(t^2-4) + 2\frac{1}{24}(t^2+2)$, or more conceptually, the sum of coefficients of the polynomial $\card{\lambda} + \frac{1}{24}(t^2-1)$, which is $0$ as noted at the case-free beginning of Lemma \ref{LEM:size-of-partition-z-coordinates-self-conjugate}.
\end{itemize}
Thus the numerator times $s'!$ is $s'N^{s'-1}\cdot\frac{1}{24}t(t^2+2) = \frac{s-1}{2}N^{s'}\cdot 2\cdot\frac{1}{24}(t^2+2) = N^{s'}\cdot \frac{1}{24}(s-1)(t^2+2)$, where we have used $s' = \frac{s-1}{2}$ (as $s$ must be odd). Dividing numerator (times $s'!$) by denominator (times $s'!$) yields the desired ratio of $\frac{1}{24}(s-1)(t^2+2)$.
\end{proof}

\section{Counting \texorpdfstring{$(m,m+d,m+2d)$}{(m,m+d,m+2d)}-cores}
\label{SEC:Amdeberhan--Leven--conjecture}

We give two cyclic shifts proofs of Theorem \ref{THM:Amdeberhan--Leven--conjecture}, using different $z$-coordinate-based parameterizations of $\mcal{C}_m\cap \mcal{C}_{m+d}\cap \mcal{C}_{m+2d}$, which may generalize in different ways.

\begin{remark}
Explicitly, the proofs differ in that the first ``symmetric proof'' views $(m,m+d,m+2d)$-cores as $(m+d)$-cores that are also $(m,m+2d)$-cores, while the second ``asymmetric proof'' views $(m,m+d,m+2d)$-cores as $(m,m+d)$-cores that are also $(m+2d)$-cores. The proofs both use (the change of variables from) Proposition \ref{PROP:change-to-z-coordinates} for different choices of coprime $s,t\ge1$, but with $s$ only a ``purely algebraic parameter'' in the first proof.\begin{arxiv} (C.f. Side Remark \ref{RMK:SIDE:z-coordinates-t-core-clarification-of-s}.)\end{arxiv}
\end{remark}

Our first proof uses the extension of Proposition \ref{PROP:change-to-z-coordinates} for general $t$-cores, with $t = m+d$.

\begin{proof}[Symmetric proof]

Let $(s,t) = (d,m+d)$, so $s,t\ge1$ are coprime. By Lemma \ref{LEM:c,a-coordinate-parameterization-s,t-cores}, a $t$-core $\lambda$ lies in $\mcal{C}_{t-d} = \mcal{C}_m$ if and only if $a_{t,i} \ge a_{t,i-d} - (t-d)$ for all $i\in\ZZ/t\ZZ$, and $\lambda\in \mcal{C}_{t+d} = \mcal{C}_{m+2d}$ if and only if $a_{t,i} \ge a_{t,i+d} - (t+d)$ for all $i$. Thus $\lambda\in \mcal{C}_t$ is a $(t-d,t,t+d) = (m,m+d,m+2d)$-core if and only if
\[ t \ge a_{t,i} - [a_{t,i+d} - d] \ge -t \]
for all $i$.\footnote{As described in Remark \ref{RMK:key-quantities-s-set-t-set-interaction}, Johnson essentially observed that the $d$-hooks in $\lambda$ correspond to the indices $i\in\ZZ/t\ZZ$ satisfying $a_{t,i} - [a_{t,i+d} - d] = -t$.} By Proposition \ref{PROP:change-to-z-coordinates} (applied to $(s,t) = (d,m+d)$) parameterizing $\mcal{C}_t = \mcal{C}_{m+d}$, these inequalities translate in $z_t$-coordinates---after division by $t$---to $1 \ge z_{t,j} \ge -1$ (i.e. $z_{t,j}\in\{-1,0,1\}$), along with the usual $\sum_{j\in\ZZ/t\ZZ}z_{t,j} = s = d$; $z_{t,j} \equiv 0\pmod{1}$; and $\sum_{j\in\ZZ/t\ZZ} jz_{t,j} \equiv 0\pmod{t}$.

A cyclic shifts argument analogous to Proposition \ref{PROP:cyclic-shift} yields
\[ \sum_{(z_{t,j})\in \mcal{C}_m\cap \mcal{C}_{m+d}\cap \mcal{C}_{m+2d}} 1 = \frac{1}{t}\sum_{\substack{x_j\in\{-1,0,1\} \\ \sum_{j\in\ZZ/t\ZZ} x_j = d}} 1 = \frac{1}{t} \sum_{i=0}^{\floor{(t-d)/2}} \binom{t}{i,i+d,t-(2i+d)}, \]
where in the last step we count valid sequences $(x_j)_{j\in\ZZ/t\ZZ}\in\{-1,0,1\}^t$ by the numbers $i,i+d,t-(2i+d)$ of appearances of $-1,+1,0$, respectively. Substituting in $t = m+d$ completes the proof. 
\end{proof}

Our second proof uses the more familiar form of Proposition \ref{PROP:change-to-z-coordinates}, for $(s,t)$-cores.

\begin{proof}[Asymmetric proof]

Let $(s,t) = (m+d,m)$, so $s,t\ge1$ are coprime. By Lemma \ref{LEM:c,a-coordinate-parameterization-s,t-cores}, a $t$-core $\lambda$ lies in $\mcal{C}_s = \mcal{C}_{m+d}$ if and only if $a_{t,i} \ge a_{t,i+s} - s$ for all $i\in\ZZ/t\ZZ$, and $\lambda\in \mcal{C}_{s+d} = \mcal{C}_{m+2d} = \mcal{C}_{2s - t}$ if and only if $a_{t,i} \ge a_{t,i+2s} - (2s-t)$ for all $i$. Thus $\lambda\in \mcal{C}_t$ is a $(t,s,2s-t) = (m,m+d,m+2d)$-core if and only if $a_{t,i} - [a_{t,i+s} - s] \ge 0$ and $a_{t,i} - [a_{t,i+2s} - 2s] \ge t$ for all $i$. By Proposition \ref{PROP:change-to-z-coordinates} (applied to $(s,t) = (m+d,m)$) parameterizing $\mcal{C}_s\cap \mcal{C}_t = \mcal{C}_{m+d}\cap \mcal{C}_m$, these inequalities translate in $z_t$-coordinates---after division by $t$---to $z_{t,j}+z_{t,j+1} \ge 1$, along with the usual $z_{t,j}\ge 0$; $\sum_{j\in\ZZ/t\ZZ}z_{t,j} = s = m+d$; $z_{t,j} \equiv 0\pmod{1}$; and $\sum_{j\in\ZZ/t\ZZ} jz_{t,j} \equiv 0\pmod{t}$.

A cyclic shifts argument analogous to Proposition \ref{PROP:cyclic-shift} yields
\[ \sum_{(z_{t,j})\in \mcal{C}_m\cap \mcal{C}_{m+d}\cap \mcal{C}_{m+2d}} 1 = \frac{1}{t}\sum_{\substack{x_j,x_j+x_{j+1}-1 \ge 0 \\ \sum_{j\in\ZZ/t\ZZ} x_j = s}} 1 = \frac{1}{t} \sum_{i=0}^{\floor{t/2}} \frac{t}{t-i}\binom{t-i}{i} \cdot \binom{s-1}{(t-i) - 1}, \]
where in the last step we count valid sequences $(x_j)_{j\in\ZZ/t\ZZ}$ by first choosing the $i$ pairwise non-neighboring $0$ terms, and then the remaining $t-i$ positive integers summing to $s$. There are $\frac{t}{t-i}\binom{t-i}{i}$ ways to choose the $i$ zero-positions (we can double-count pairs $(S,\alpha)$ with $S\subseteq\ZZ/t\ZZ$ the set of $i$ zero-positions, and $\alpha\notin S$ some non-zero-position), and $\binom{s-1}{(t-i)-1}$ ways to choose the $t-i$ positive integers. To finish, we note $\frac{1}{t-i}\binom{t-i}{i}\binom{s-1}{t-i-1} = \frac{1}{s}\binom{s}{i,s-t+i,t-2i}$ and then substitute in $t = m$ and $s = m+d$.
\end{proof}

\begin{remark}
Using either parameterization and mimicking the proof of Theorems \ref{THM:general-Armstrong} (in particular the cyclic shifts), one could evaluate the sum of the sizes of $(m,m+d,m+2d)$-cores. Mimicking Section \ref{SEC:self-conjugate-analogs}, one could also study self-conjugate analogs for $\mcal{D}_m\cap \mcal{D}_{m+d}\cap \mcal{D}_{m+2d}$. Also, in principle, one could probably carry much of this over to $(m,m+d,\ldots,m+kd)$-cores (for $k\ge3$), or even to more general multiple cores, with the computational messiness growing with the complexity of the set of avoided hook lengths. However, it would likely be unenlightening to explicitly work out these directions without new ideas.
\end{remark}

\section{Future work}
\label{SEC:future-work}

%Explore modular representation theory (or other algebraic) connections? Do these combinatorial questions/results actually have any bearing on that? In any case, what kinds of questions would have a bearing on it?

%Maximal $(s,t)$-cores, Amol's maximal $(a,b,c)$-cores.

%Try programming stuff?

This section discusses possibilities for future work.

\begin{question}[Significance of coordinates]
How are the (asymmetric) $z$-coordinates related to other bijections (Anderson \cite{Anderson} for general cores; Ford--Mai--Sze \cite{FMS} for self-conjugate cores; the poset formulation of Stanley--Zanello \cite{SZ} developed further in \cite{Aggarwal}, \cite{AL}, and \cite{Amol}; etc.), which are generally more symmetric in $s$ and $t$?\begin{arxiv} (C.f. Side Remarks \ref{RMK:SIDE:Anderson-vs-Johnson-bijections} and \ref{RMK:SIDE:z-coordinates-significance}.)\end{arxiv}
\end{question}

%Is there a better proof of some things? (e.g. cyclic shifts of order $s+t$, more symmetric coordinates in $s,t$) More conceptual stuff, perhaps further directions to explore.

\begin{question}
Can the computations, especially for Fayers' conjectures (Theorems \ref{THM:general-Fayers} and \ref{THM:self-conjugate-Fayers}) be simplified along the lines of Johnson's weighted Ehrhart reciprocity methods in \cite{Johnson} for Armstrong's conjectures? Is there an `exponential' version of Ehrhart reciprocity?
\end{question}

\begin{remark}
\label{RMK:conceptual-proof}

In Theorems \ref{THM:general-Armstrong} and \ref{THM:general-Fayers} (and with some parity casework in Theorems \ref{THM:self-conjugate-Armstrong} and \ref{THM:self-conjugate-Fayers}) one can certainly use implicit variants of the generating function calculations (which, for Fayers' weighted sums, give an `exponential' prototype of basic Ehrhart \emph{theory}, but not necessarily \emph{reciprocity}) to first show that the average size is a polynomial in $s$ of degree at most $2$, and then give $3$ easy-to-determine pieces of information to uniquely determine the quadratic. One piece of information could be Remark \ref{RMK:conceptual-cancellation-Fayers-general}, i.e. that Fayers' weighted average is linear in $s$; taking $s=1$ or `$s=0$' could be two other pieces of information. For Armstrong's un-weighted average, Johnson \cite{Johnson} uses the $3$ pieces `$s=0$', $s=1$, and $s = -t-1$ (using weighted Ehrhart reciprocity in the last case; one could also more concretely use the vanishing of $\binom{s+t-1}{t+*}$ for $*\ge0$). However, concrete explicit computations have their benefits, and implicit variants are already well-exposited in \cite{Johnson}.
\end{remark}

%\todo[inline]{Write down the proof that the average of arbitrary power is still a polynomial (not just a rational function as one might a priori expect) for the normal averages. But explain why it's not clear whether it's a polynomial in $t$ for the weighted version (though it's certainly a polynomial in $s$). C.f. Stanley--Zanello's remark that for fixed $s$ or fixed $t$, in principle can evaluate these ad hoc/by hand computationally, so enough to check for just some finite number of values proportional to the power taken (e.g. need $2d+1$ pieces of information for average of $d$th powers).}

%\todo[inline]{Give cleaner ``degree $2d$ polynomial determined by $2d+1$ values'' proofs of both general and (esp.) self-conjugate conjectures, both Armstrong and Fayers. Can still avoid Ehrhart reciprocity (in fact, it's unclear how one could use Ehrhart reciprocity in the first place for Fayers/exponential sums).  Prove that the average of $d$th weighted powers is a polynomial in $s$ of degree $\le 2d$ (first show $\le 2d+1$, and that the leading term vanishes, following the remarks in the proofs of Fayers' conjectures).}

More concretely, one could study sums of higher powers of $\card{\lambda}$ or other statistics.

\begin{question}
\label{QUES:higher-powers?}

For integers $e\ge0$, what can one say about $\sum_{\lambda\in \mcal{C}_s\cap \mcal{C}_t} \card{\lambda}^e$, or the weighted sum $\sum_{\lambda\in \mcal{C}_s\cap \mcal{C}_t} \card{\stab_{G_{s,t}}(\lambda)}^{-1} \cdot \card{\lambda}^e$? Can we study other statistics, such as length (number of nonzero parts)? Do these have a nice form? What do they say about the distribution of $(s,t)$-cores, indexed by size?
\end{question}

\begin{remark}
As Levent Alpoge points out, it may be more natural to look at \emph{moments} (e.g. in the un-weighted case, essentially sums of powers of $\card{\lambda} - \frac{1}{24}(s-1)(t-1)(s+t+1)$). For specific conjectures of Ekhad and Zeilberger (supported by numerical evidence), see \cite[First Challenge]{Zeilberger} (which was posted on the arXiv a few weeks after v3 of the present paper).
\end{remark}

\begin{remark}
\label{RMK:higher-power-methods}

For any finite exponent $e$ these sums can in principle be computed explicitly by the cyclic shift methods used in Theorems \ref{THM:general-Armstrong} and \ref{THM:general-Fayers}. As a start, the generating function calculations show that both the un-weighted and weighted averages are polynomials in $s$ (but a priori only rational expressions in $t$) of degree at most $2e$. Using the higher degree analog of Remark \ref{RMK:conceptual-cancellation-Fayers-general}, one can reduce the degree bound to $2e-1$ in the weighted case. In the un-weighted case, one can use the symmetry in $s,t$ to show that the un-weighted average is always a polynomial in $s,t$, hence in fact a symmetric polynomial.
\end{remark}

Alternatively, one could ask about generating functions of $(s,t)$-cores. Note that the generating function for $s$-cores (indexed by size) has a well-known nontrivial expression, given for instance in \cite[Bijection 1]{Cranks-t-cores-expos} (based on the $s$-core operation).

\begin{question}
Does the generating function for $(s,t)$-cores (indexed by size) have a nice form or any interesting properties?
\end{question}

\begin{remark}
Of course, if $s,t$ are coprime, this is a polynomial (as there are finitely many $(s,t)$-cores). See \cite{Keith} for some potential progress by W. Keith in this direction. On the other hand, if $g\colonequals\gcd(s,t)>1$, then \cite{AKS} gives the $(s,t)$-generating function in terms of the $(s/g,t/g)$-generating function (thanks to Rishi Nath for pointing out this reference).
\end{remark}

One could perhaps also ask further interesting questions about $t$-cores of different $s$-cores, following \cite{F1,Fayers}. For example, are there unexplored natural definitions of ``randomness''? Or is there anything one can do with the following remark?

%For example, what's the $t$-core of $s$-core of ``random'' partition? Same as $t$-core of ``random'' $s$-core? Probably more interesting stuff too.

\begin{remark}
The group $G_{s,t}$ (Definition \ref{DEF:level-t-group-action}) acts not only on $\mcal{C}_s$ (via $s$-sets or beta-sets of $s$-cores), but on the whole set of partitions (via beta-sets). Similarly, the group $H_{s,t}$ (Definition \ref{DEF:level-t-group-action-self-conjugate}) acts not only on $\mcal{D}_s$, but also on $\mcal{C}_s$ (via $s$-sets or beta-sets), and the whole set of partitions (via beta-sets).
\end{remark}

In a different direction, one could investigate numerical semigroups containing $s,t$, which inject into $\mcal{C}_s\cap\mcal{C}_t$ (as mentioned in the introduction). Do the techniques for $(s,t)$-cores help?

\subsection*{Acknowledgements}
%\section{Acknowledgements}

This research was conducted at the University of Minnesota Duluth REU and was supported by NSF grant 1358695 and NSA grant H98230-13-1-0273. The author especially thanks Joe Gallian for suggesting the problem and carefully proofreading the paper, Melanie Wood for suggesting exactly what to emphasize or de-emphasize, and Amol Aggarwal, Levent Alpoge, Ben Gunby, Rishi Nath, and Paul Johnson for interesting discussions and other helpful comments on the manuscript. The author also thanks Rishi Nath for pointing out the papers \cite{AL} and \cite{AKS}, Tewodros Amdeberhan for suggesting how to elucidate the shortest path to the proof of Theorem \ref{THM:Amdeberhan--Leven--conjecture}, an anonymous expert for suggesting several helpful changes, and an anonymous referee for a thorough reading. The author would also like to thank Paul Johnson \cite{Johnson} and Matthew Fayers \cite{F1,Fayers} for their excellent exposition on core partitions, Matthias Beck and Sinai Robins \cite{Ehrhart} for their wonderful exposition on Ehrhart theory, and J\o rn Olsson \cite{Olsson} for a counterexample showing that the $t$-core and $s$-core operations do not generally commute. Finally, the author thanks the \href{https://www.overleaf.com/read/xmzxdgbdcpnq}{Overleaf} website for speeding up the writing process.

\end{document}